\newtheorem{theorem}{Theorem}[section]
\newtheorem{lemma}[theorem]{Lemma}
\newtheorem{corollary}[theorem]{Corollary}
\newtheorem{proposition}[theorem]{Proposition}
\theoremstyle{definition}
\newtheorem{definition}[theorem]{Definition}
\newtheorem{remark}[theorem]{Remark}
\newtheorem{question}[theorem]{Question}
\newcommand{\ZZ}{\mathbb{Z}}
\newcommand{\QQ}{\mathbb{Q}}
\newcommand{\RR}{\mathbb{R}}
\newcommand{\CC}{\mathbb{C}}
\newcommand{\PP}{\mathbb{P}}
\newcommand{\mm}{\mathfrak{m}}
\newcommand{\HH}{\mathcal{H}}
\DeclareMathOperator{\sh}{sh}
\DeclareMathOperator{\len}{len}
\DeclareMathOperator{\spe}{sp}
\DeclareMathOperator{\Spec}{Spec}
\DeclareMathOperator{\Hilb}{Hilb}
\DeclareMathOperator{\ev}{ev}
\DeclareMathOperator{\sgn}{sgn}
\DeclareMathOperator{\Hom}{Hom}
\DeclareMathOperator{\gr}{gr}
\DeclareFontFamily{U}{mathx}{\hyphenchar\font45}
\DeclareFontShape{U}{mathx}{m}{n}{
      <5> <6> <7> <8> <9> <10>
      <10.95> <12> <14.4> <17.28> <20.74> <24.88>
      mathx10
      }{}
\DeclareSymbolFont{mathx}{U}{mathx}{m}{n}
\DeclareMathAccent{\widecheck}{0}{mathx}{"71}
\begin{document}
\nocite*{}
\title[]
{
Symmetric Ideals and Invariant Hilbert Schemes
}

\author{Sebastian Debus}
\address{Technische Universität Chemnitz, Fakultät für Mathematik, 09107 Chemnitz, Germany}
\email{sebastian.debus@math.tu-chemnitz.de}
\author{Andreas Kretschmer}
\address{Humboldt-Universität zu Berlin, Institut für Mathematik, Rudower Chaussee 25,
12489 Berlin, Germany}
\email{andreas.kretschmer@hu-berlin.de}

\begin{abstract}
A symmetric ideal is an ideal in a polynomial ring which is stable under all permutations of the variables. In this paper we initiate a global study of zero-dimensional symmetric ideals. By this we mean a geometric study of the invariant Hilbert schemes $\Hilb_{\rho}^{S_n}(\CC^n)$ parametrizing symmetric subschemes of $\CC^n$ whose coordinate rings, as $S_n$-modules, are isomorphic to a given representation $\rho$. In the case that $\rho = M^\lambda$ is a permutation module corresponding to certain special types of partitions $\lambda$ of $n$, we prove that $\Hilb_{\rho}^{S_n}(\CC^n)$ is irreducible or even smooth. We also prove irreducibility whenever $\dim \rho \leq 2n$ and the invariant Hilbert scheme is non-empty. In this same range, we classify all homogeneous symmetric ideals and decide which of these define singular points of $\Hilb_{\rho}^{S_n}(\CC^n)$. A central tool is the combinatorial theory of higher Specht polynomials. 
\end{abstract}

\maketitle

\section{Introduction}

A symmetric ideal is an ideal in a polynomial ring which is stable under all permutations of the variables.
Special classes of symmetric ideals, for instance \emph{Specht ideals} and \emph{Tanisaki ideals} \cite{Tanisaki1982Defining,BergeronGarsia1992Harmonic,GarsiaProcesi1992}, have been studied intensively in the algebraic combinatorics literature and are related to Kostka--Macdonald polynomials and the famous work of Haiman on $n!$, see \cite{Haiman2003Combinatorics} and the references therein.
In commutative algebra, symmetric ideals are mainly studied for their asymptotic properties. We refer to \cite{Nagel2017Equivariant,Nagel2019FIvaryingCoeff,NagelRoemer2020CodimUpToSymmetry,NagelRoemer2021RegularityUpToSymmetry,nagpal2021symmetric} for plenty of examples as well as to \cite{ChurchFarb2013RepresentationStability,ChurchEllenbergFarb2015FImodules} for foundational results on representation stability and FI-modules. One of the most well-known results on symmetric ideals is that an infinite chain of symmetric ideals $I_n \subseteq \CC[x_1,\ldots,x_n]$ in increasingly many variables with $I_{n-1} \subseteq I_n$ for all $n$ eventually stabilizes in the sense that $I_n = (S_n \cdot I_{n-1})$ for all $n$ large enough \cite{Cohen1967Laws,Aschenbrenner2007Finite,Hillar2012Finite,Draisma2014Noetherianity}, where $S_n$ is the symmetric group on $n$ elements.

The main goal of this article is to initiate a global study of symmetric ideals, starting with the zero-dimensional case. By this we mean a geometric study of the invariant Hilbert schemes $\Hilb_{\rho}^{S_n}(\CC^n)$ parametrizing symmetric subschemes of $\CC^n$ whose coordinate rings are isomorphic to $\rho$ as $S_n$-representations. This is a very special case of the invariant Hilbert schemes of Alexeev and Brion \cite{AlexeevBrion,BrionInvariantHilb}.

Of central interest to us is the case where $\rho = M^\lambda$ is the permutation module corresponding to a partition $\lambda$ of $n$. It is a consequence of \cite{BergeronGarsia1992Harmonic} that there is only a single homogeneous ideal in $\Hilb_{M^\lambda}^{S_n}(\CC^n)$, called the Tanisaki ideal $I_\lambda$. It can be obtained as the associated graded ideal of the vanishing ideal of the $S_n$-orbit of a point in $\CC^n$ whose entries occur with multiplicities $\lambda_1, \lambda_2,$ and so on \cite{GarsiaProcesi1992}. The closure of the set of these vanishing ideals is an irreducible component of $\Hilb_{M^\lambda}^{S_n}(\CC^n)$, the \emph{smoothable component}. A natural question is therefore whether this is all there is, i.e., whether $\Hilb_{M^\lambda}^{S_n}(\CC^n)$ is irreducible. For certain types of partitions, for instance if $\lambda$ is a hook partition or has only two non-zero parts, we show that $\Hilb_{M^\lambda}^{S_n}(\CC^n)$ is indeed irreducible, see Corollary~\ref{corollary:hookPartition}. In the latter case it is even smooth by Proposition~\ref{prop:m=2}. For arbitrary $\lambda$, we prove that the Hilbert--Chow morphism
\begin{equation*}
    \gamma \colon \Hilb_{M^\lambda}^{S_n}(\CC^n) \longrightarrow \CC^n/S_n,
\end{equation*}
mapping an ideal to its vanishing orbit, is finite and has singleton fibers over the image of the smoothable component (Theorem~\ref{thm:singleOrbit}). From this one may deduce that the dimension of $\Hilb_{M^\lambda}^{S_n}(\CC^n)$ agrees with the number $m$ of non-zero parts of $\lambda$, and that the normalization of its smoothable component is isomorphic to $\CC^m$ with the normalization map being bijective.

In addition, for all $\rho$ satisfying $\dim \rho \leq 2n$ we prove that $\Hilb_\rho^{S_n}(\CC^n)$ is irreducible whenever it is non-empty, and we decide when this happens (Theorem~\ref{thm:classification}). Moreover, in the same range we explicitly classify all homogeneous ideals in $\Hilb_\rho^{S_n}(\CC^n)$ and decide which of these define singular points, see Table~\ref{tab:classification}.

Beyond these cases, it can be observed that if $\rho = \CC[S_n]^{\oplus l}$ is a direct sum of regular representations, the Hilbert--Chow morphism is actually an isomorphism $\Hilb_{\CC[S_n]^{\oplus l}}^{S_n}(\CC^n) \cong \Hilb_l(\CC^n/S_n)$ (Proposition~\ref{prop:HilbChowIso}). In particular, this shows that $\Hilb^{S_n}_\rho(\CC^n)$ is in general at least as complicated as the usual Hilbert scheme of points of affine space. An analogous statement should also hold for other types of finite reflection groups.

A peculiar observation in our study is that the geometry of $\Hilb_{\rho}^{S_{n}}(\CC^{n})$ does not seem to depend strongly on the ambient dimension $n$ when $n$ grows while $\rho$ ``stays the same,'' in an appropriate sense. For instance, this is what made it possible to complete our classification in the range $\dim \rho \leq 2n$ for arbitrary $n$. We make this apparent stabilization phenomenon more precise in Subsection~\ref{subsec:stabilization}, see Question~\ref{question:stabilization}.

The paper is structured as follows. In Section~\ref{section:preliminaries} we quickly introduce higher Specht polynomials and invariant Hilbert schemes. Section~\ref{section:Tanisaki} deals with the case of permutation modules, $\rho = M^\lambda$. The classification in the range $\dim \rho \leq 2n$ is contained in Section~\ref{section:classification}. Several open questions are raised in the final Section~\ref{section:conclusion}.

\section{Preliminaries}\label{section:preliminaries}

\subsection{Higher Specht polynomials}
We encourage the reader familiar with basic notions of the characteristic~zero representation theory of $S_n$ to skip this subsection on a first reading and refer back to it when needed.

We introduce our notation and define the \emph{higher Specht polynomials} of \cite{Terasoma1993Higher}. Foundational references on $S_n$-representations and related combinatorics are, e.g., \cite{FultonHarris1991ReprTheory,Fulton,Sagan}.

A \emph{partition} $\lambda=(\lambda_1,\ldots,\lambda_m)$ of $n$ is a sequence of non-increasing positive integers such that $\sum_{i\geq1} \lambda_i = n$ and $\lambda_m \neq 0$.
We write $\lambda \vdash n$.
The \emph{size} of $\lambda$ is $|\lambda| \coloneqq n$ and its \emph{length} is denoted by $\len(\lambda) \coloneqq m$. 

If $\lambda, \mu \vdash n$, then $\mu$ \emph{dominates} $\lambda$ if $\sum_{j=1}^k \mu_j \geq \sum_{j=1}^k \lambda_j$ for all $k$, where $\mu_j$ is interpreted as zero if $j > \len(\mu)$. We denote domination by $\mu \unrhd \lambda$.
A partition $\lambda$ can be represented by its \emph{(Young) diagram}, i.e., the ordered sequence of left-justified boxes from the left to the right and the top to the bottom, where the $i$-th line contains $\lambda_i$ boxes. We say that the associated diagram has \emph{shape} $\lambda$. The \emph{transpose partition} $\lambda'$ of $\lambda$ is the partition whose diagram is obtained from transposing the diagram of $\lambda$. A \emph{tableau} of shape $\lambda$ is a bijective filling of the diagram of shape $\lambda$ with the integers in $[n] \coloneqq \{1,\ldots,n\}$. We write $\sh(T) = \lambda$ if $T$ is a tableau of shape $\lambda$.
For instance,
$$\ytableausetup{smalltableaux}
\begin{ytableau}
9 & 3 & 6 & 4 \\
2 & 1 & 8\\ 
5 & 7
\end{ytableau}$$
is a tableau of shape $(4,3,2)$.  
A \emph{standard tableau} is a tableau in which the integers in every row and column are increasing from the left to the right and the top to the bottom.

For a sequence $(i_1,\ldots,i_l)$ of distinct positive integers, we define the associated \emph{Vandermonde polynomial} in the variables $x_{i_1},\ldots,x_{i_l}$ as \[ \Delta_{(i_1,\ldots,i_l)}(x) = \prod_{j < k \in  [l]}(x_{i_j}-x_{i_k}). \] By convention, $\Delta_{(i)} = 1$.
For a tableau $T$ of shape $\lambda$ with $l$ columns, we denote by $T_i$ the sequence of numbers contained in the $i$-th column of $T$ from above to below. Then the associated \emph{Specht polynomial} $\spe_T(x)$ is the product of the Vandermonde polynomials of all columns $T_i$, i.e.,
\[ \spe_{T}(x) \coloneqq \prod_{i=1}^l \Delta_{T_i}.\]

For example, for the above tableau $T$ of shape $(4,3,2)$, we have 
\begin{align*}
\spe_{T}(x)&=\Delta_{(9,2,5)}(x)\Delta_{(3,1,7)}(x)\Delta_{(6,8)}(x)\Delta_{(4)}(x) 
\\ &=(x_9-x_2)(x_9-x_5)(x_2-x_5)(x_3-x_1)(x_3-x_7)(x_1-x_7)(x_6-x_8).
\end{align*}

For a group $G$ acting linearly on a complex vector space $V$ we denote the action of a group element $\sigma \in G$ on a vector $v \in V$ by $\sigma \cdot v$.
If $G$ is finite, then the irreducible representations are in correspondence with the conjugacy classes. In the case of the symmetric group $S_n$ these in turn correspond bijectively to integer partitions of $n$. For $\lambda \vdash n$ the irreducible representation of $S_n$ corresponding to $\lambda$ is the \emph{Specht module} $S^\lambda$. It can be constructed as the vector space spanned by all Specht polynomials of shape $\lambda$. It was proven by Specht \cite{Specht} that these representations are indeed irreducible and pairwise non-isomorphic.

We will consider the action of $S_n$ on the polynomial ring $P \coloneqq \CC[x_1,\ldots,x_n]$ via permutations of the variables $\sigma \cdot x_i \coloneqq x_{\sigma(i)}$.
A polynomial is \textit{symmetric} if it is invariant under the $S_n$-action. The invariant ring $P^{S_n}$ is generated by $n$ algebraically independent polynomials which can be chosen as the sequence of the first $n$ elementary symmetric polynomials $e_j= \sum_{I \subset [n], |I| = n} \prod_{i \in I} x_i$ or the power sum polynomials $p_j = \sum_{i=1}^n x_i^j$. The \emph{coinvariant algebra} of $P$ is the quotient of $P$ by the ideal $(p_1,\ldots,p_n)$ generated by all homogeneous symmetric polynomials of positive degree. As an $S_n$-representation, the coinvariant algebra has the structure of the \emph{regular representation} of $S_n$, i.e., each irreducible representation $S^\lambda$ occurs $\dim(S^\lambda)$ times in $P / (p_1,\ldots,p_n)$. Moreover, we have an isomorphism of graded $S_n$-modules $P \cong P/(p_1,\ldots,p_n) \otimes_\CC \CC[p_1,\ldots,p_n]$. An isotypic decomposition of the coinvariant algebra lifts to an isotypic decomposition of $P$ by multiplying the coinvariants by symmetric polynomials. Terasoma and Yamada \cite{Terasoma1993Higher} provided an explicit decomposition of the coinvariant algebra into its irreducible subspaces by introducing \emph{higher Specht polynomials}. This was extended in \cite{Ariki1997Higher}, for example to certain direct products of reflection groups.

For a Young tableau $T$ of shape $\lambda \vdash n$ we denote by $C(T)$ the \emph{column stabilizer} and by $R(T)$ the \emph{row stabilizer} of $T$. These are the maximal subgroups of $S_n$ which permute the entries in any column, respectively any row of $T$. 
The \emph{word} of $T$ is the sequence $w(T) \in \ZZ^n$ where we read each column of the tableau $T$ from the bottom to the top, starting from the left.
The \emph{index} $i(T) \in \ZZ^n$ of $T$ is defined as follows. The number $1$ in the word $w(T)$ has index $0$. Recursively, if the number $k$ in $w(T)$ has index $p$, then $k+1$ has index either $p$ or $p+1$ according to whether $k+1$ is right or left from $k$ in $w(T)$. For instance, for the above tableau $T$ we have $w(T)=(5,2,9,7,1,3,8,6,4)$ and $i(T)=(2,1,4,3,0,1,3,2,1)$.
Next, we can associate to a tuple of tableaux $(T,S)$ of the same shape $\lambda$ a monomial $x_T^{i(S)}$ in $n$ variables by
\begin{equation*}
    x_T^{i(S)} := x_{w(T)_1}^{i(S)_1} \cdots x_{w(T)_{n}}^{i(S)_{n}}.
\end{equation*}
The \emph{higher Specht polynomial} associated with $(T,S)$ is then defined as
$$F_{T}^S := \sum_{\substack{\sigma \in C(T)\\ \tau \in R(T)}} \sgn (\sigma) \sigma \circ \tau \cdot x_{T}^{i(S)} \in P.$$

\begin{theorem}[{\cite{Terasoma1993Higher}}]
The $\lambda$-part of the isotypic decomposition of the coinvariant algebra has a basis given by the higher Specht polynomials $F_T^S$ where $\sh(S) = \sh(T) = \lambda$ and $S,T$ are both standard tableaux. Moreover, for a fixed standard tableau $S$, the representation $\langle F_T^S : \sh(T) = \sh(S) = \lambda \rangle$ is isomorphic to the Specht module $S^{\lambda}$.
\end{theorem}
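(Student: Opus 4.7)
The plan is to prove three facts in sequence: each family $\{F_T^S : T \text{ SYT of shape } \lambda\}$ spans a single copy of the Specht module $S^\lambda$; the $F_T^S$ remain linearly independent modulo the symmetric ideal $(p_1,\ldots,p_n)$; and together they exhaust the $\lambda$-isotypic component by a dimension count.

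First, I would observe that $F_T^S = c_T \cdot x_T^{i(S)}$, where $c_T = \sum_{\sigma \in C(T),\, \tau \in R(T)} \sgn(\sigma)\,\sigma\tau$ is, up to convention, the Young symmetrizer attached to $T$. A standard polytabloid-style argument combined with Garnir-type straightening then shows that the span $V_S := \spn\{F_T^S : T \text{ SYT of shape } \lambda\}$, whenever non-trivial, is an irreducible $S_n$-module isomorphic to $S^\lambda$ with basis indexed by standard $T$. Non-triviality is ensured by a direct leading-monomial computation: the exponent $i(S)$ is designed precisely so that one specific monomial in the expansion of $F_T^S$ survives without cancellation among the $c_T$-summands.

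Second, I would match graded degrees with the graded character of the coinvariant algebra. By the classical Lusztig--Stanley formula, the multiplicity of $S^\lambda$ in degree $d$ of $P/(p_1,\ldots,p_n)$ equals the number of standard tableaux $S$ of shape $\lambda$ whose cocharge equals $d$, so that the fake-degree polynomial is
\begin{equation*}
    f^\lambda(q) = \sum_{S \in \mathrm{SYT}(\lambda)} q^{\operatorname{cocharge}(S)}.
\end{equation*}
The index $i(S)$ is engineered so that $|i(S)| := \sum_k i(S)_k = \operatorname{cocharge}(S)$, a combinatorial identity provable by induction on $n$ by tracking descents in $w(S)$. Hence $F_T^S$ lies in the correct graded piece of the $\lambda$-isotypic component for every pair $(S,T)$.

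Third, I would establish linear independence of the $F_T^S$ modulo $(p_1,\ldots,p_n)$ via a leading-monomial argument in a term order refining total degree: distinct pairs $(S,T)$ should yield distinct leading monomials, which, combined with the Specht-module structure from the first step, forces independence across all pairs. Together with the count $\#\{(S,T) : S,T \in \mathrm{SYT}(\lambda)\} = (f^\lambda)^2 = \dim S^\lambda \cdot \text{mult}(S^\lambda, \CC[S_n])$, matching the total dimension of the $\lambda$-isotypic component in the regular representation, this shows that the $F_T^S$ form a basis. I expect the main obstacle to be the explicit straightening/leading-monomial analysis: controlling the symmetrization $c_T \cdot x_T^{i(S)}$ precisely enough to rule out accidental cancellations in the coinvariant quotient. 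Everything else reduces to standard $S_n$-representation theory or the classical Hilbert-series computation.
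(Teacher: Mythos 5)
The paper does not prove this theorem; the label explicitly attributes it to Terasoma--Yamada \cite{Terasoma1993Higher}, and the paper imports it as a black box. So there is no internal proof for your sketch to be compared against --- the relevant comparison is with the strategy of Terasoma--Yamada themselves, which your outline reconstructs fairly faithfully: (i) for fixed $S$, the collection $F_T^S$ spans a copy of $S^\lambda$; (ii) the degree statistic $|i(S)|$ matches the exponents in the fake-degree polynomial; (iii) a dimension count closes the argument via linear independence modulo $(p_1,\ldots,p_n)$.

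A few cautions on the gaps you acknowledge, since they are not merely notational. In step (i), the identification of the span with $S^\lambda$ follows from the Young-symmetrizer property of $c_T$ once you know the span over \emph{all} tableaux $T$ is $S_n$-stable and nonzero; but the further claim that the \emph{standard} $T$ index a basis is additional and requires a genuine straightening argument in the higher Specht setting, where the polynomial $x_T^{i(S)}$ depends on $T$ through the column word, so the classical Garnir relations do not carry over verbatim and must be re-derived. In step (ii), the identity $|i(S)| = \operatorname{cocharge}(S)$ should be treated as tentative: the index statistic built from the column reading word is a (co)major-index-type quantity, and whether it is literally charge, cocharge, maj, or comaj depends on reading conventions; what you actually need is only that the multiset $\{|i(S)| : S \in \mathrm{SYT}(\lambda)\}$ coincides with the multiset of exponents in the Lusztig--Stanley fake-degree polynomial, which is weaker and safer to claim. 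In step (iii), linear independence of the $F_T^S$ in $P$ does not automatically descend to the quotient $P/(p_1,\ldots,p_n)$ because the latter is not a monomial quotient; your leading-monomial argument needs the extra input that the leading terms land in a fixed monomial basis of the coinvariant algebra (e.g.\ the Artin staircase monomials $x_1^{a_1}\cdots x_n^{a_n}$ with $a_j \le n-j$), which is precisely the delicate combinatorial core of the Terasoma--Yamada proof. None of this undermines the plan, but each point is where the actual mathematical content lives, so the proposal should be read as a roadmap rather than a complete proof.
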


For instance, when $n=3$ we have 
\begin{align*}
  \CC[x_1,x_2,x_3]/(p_1,p_2,p_3) & \cong S^{(3)} \oplus 2 S^{(2,1)} \oplus S^{(1,1,1)} \\
  & = \langle 1 \rangle \oplus \langle x_i - x_j \rangle \oplus \langle (x_i-x_j)x_k \rangle \oplus \langle (x_1-x_2)(x_1-x_3)(x_2-x_3) \rangle.
\end{align*}
For any $\lambda$ there exists a unique standard tableau $S_0$ of shape $\lambda$ such that the sum of the entries of the indices of standard tableaux of shape $\lambda$ is minimal. The higher Specht polynomial $F_T^{S_0}$ then coincides with the Specht polynomial $\spe_T$ up to a multiplicative constant.

For a partition $\lambda \vdash n$ we denote by $O(\lambda)$ the subset of $\CC^n$ of points of orbit-type $\lambda$, i.e., the points whose stabilizer subgroup in $S_n$ is a conjugate of the Young subgroup $S_\lambda = S_{\lambda_1} \times \cdots \times S_{\lambda_m}$. The ideal of $P$ generated by all Specht polynomials of shape $\lambda$ is called the \emph{Specht ideal} of $\lambda$ and we write $V_\lambda$ for its vanishing set. Then $V_\lambda = \bigcup_{\mu \not \unlhd \lambda} O(\mu)$ by \cite[Corollary~1]{moustrou2021symmetric}. A simple consequence is $\bigcap_{\lambda \not \ntrianglerighteq \theta} V_\lambda = \bigcup_{\tau \trianglerighteq \theta} O(\tau)$. By abuse of notation, we denote by $O(\lambda)/S_n$ the image of $O(\lambda)$ in $\CC^n/S_n = \Spec(\CC[p_1,\ldots,p_n])$ and by $\overline{O(\lambda)}/S_n$ its closure. We emphasize that $\overline{O(\lambda)}/S_n$ is usually a non-normal subvariety of $\CC^n/S_n$.
Let us also recall the following well-known lemma.

\begin{lemma}
Set-theoretically, the fiber over every closed point of the map $(p_1,\ldots,p_n) : \CC^n \to \CC^n$ consisting of the first $n$ power sum polynomials is the $S_n$-orbit of a single point in $\CC^n$. In particular, the ideal $(p_1,\ldots,p_n) \subseteq P$ is a complete intersection.
\end{lemma}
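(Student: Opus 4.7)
The plan is to reduce the map $p = (p_1,\dots,p_n)$ to the map $e = (e_1,\dots,e_n)$ given by the elementary symmetric polynomials, via Newton's identities. Specifically, Newton's identities express each $e_k$ as a polynomial in $p_1,\dots,p_k$ (with rational coefficients, where the denominators are units in $\CC$) and conversely each $p_k$ ($k \leq n$) as a polynomial in $e_1,\dots,e_k$. Hence the two morphisms $\CC^n \to \CC^n$ differ by a polynomial automorphism of the target $\CC^n$, and in particular they have the same set-theoretic fibers and define the same ideal up to coordinate change of $\CC^n_{\text{target}}$, so proving the statement for $e$ suffices.

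For the map $e$, I would use the standard observation that for any $a = (a_1,\dots,a_n) \in \CC^n$ one has the polynomial identity
\begin{equation*}
    \prod_{i=1}^n (t - a_i) \;=\; t^n - e_1(a)\, t^{n-1} + e_2(a)\, t^{n-2} - \cdots + (-1)^n e_n(a).
\end{equation*}
Therefore the fiber $e^{-1}(c)$ over $c = (c_1,\dots,c_n) \in \CC^n$ consists precisely of the $n$-tuples $(a_1,\dots,a_n)$ whose coordinates form (with multiplicity) the root multiset of $t^n - c_1 t^{n-1} + \cdots + (-1)^n c_n \in \CC[t]$. Since $\CC$ is algebraically closed, this polynomial has such a root multiset and it is unique, so the fiber is the set of all orderings of this multiset, i.e., a single $S_n$-orbit.

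For the complete intersection assertion I would use the following standard consequence: the invariant ring $P^{S_n} = \CC[e_1,\dots,e_n] = \CC[p_1,\dots,p_n]$ is a polynomial algebra, and $P$ is a free $P^{S_n}$-module of rank $n! = |S_n|$ (by Chevalley--Shephard--Todd, or directly since $\CC[x_1,\dots,x_n]$ over its symmetric subring is free on a basis of Schubert-type monomials). Consequently $P/(p_1,\dots,p_n)$ is a finite-dimensional $\CC$-algebra, so $\dim P - \dim P/(p_1,\dots,p_n) = n$, which by Krull's height theorem forces $p_1,\dots,p_n$ to be a regular sequence. No step here is really subtle; the only mild point to be careful about is invoking Newton's identities in characteristic zero to pass freely between the $e$- and $p$-descriptions, but over $\CC$ this is routine.
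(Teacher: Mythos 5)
Your proof is correct and takes essentially the same approach as the paper: describe the fibers of the elementary-symmetric-polynomial map via Vieta's formula, then transport the conclusion to the power sums via Newton's identities. Your argument for the complete-intersection claim is also valid, though you could bypass Chevalley--Shephard--Todd entirely by observing that the first part already gives $V(p_1,\ldots,p_n)=\{0\}$, so $(p_1,\ldots,p_n)$ has height $n$, and $n$ generators of a height-$n$ ideal in the Cohen--Macaulay ring $P$ automatically form a regular sequence.
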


\begin{proof}
We first consider the fiber of $(a_1,\dots,a_n) \in \CC^n$ for the map $(e_1,\ldots,e_n): \CC^n \to \CC^n$ consisting of the elementary symmetric polynomials. By Vieta's formula, we have $t^n - a_1 t^{n-1} + \ldots + (-1)^n a_n = \prod_{i=1}^n (t - \alpha_i)$ for some $\alpha_i \in \CC$ which are unique up to permutation. This shows that the preimage of $(a_1,\dots,a_n)$ consists precisely of the permutations of $(\alpha_1,\ldots,\alpha_n)$.
The same is then true for the map $(p_1,\ldots,p_n)$ by the Newton identities, which provide polynomial relations $p_j=(-1)^{j-1}je_{j}+\sum _{i=1}^{j-1}(-1)^{j-1+i}e_{j-i}p_i$, for all $1 \leq j \leq n$, between the power sum polynomials and the elementary symmetric polynomials.
\end{proof}

An important representation for us will be the permutation module $M^\lambda$. It can be defined as the the induced representation of the trivial representation for the Young subgroup corresponding to $\lambda$, in formulas $M^\lambda = \mathrm{Ind}^{S_n}_{S_{\lambda_1} \times \cdots \times S_{\lambda_m}} \mathbf{1}$, where $\mathbf{1}$ denotes the trivial representation and ${S_{\lambda_1} \times \cdots \times S_{\lambda_m}} \subset S_n$ is a short notation for the Young subgroup $S_{\{1,2,\ldots ,\lambda _{1}\}}\times S_{\{\lambda _{1}+1,\lambda _{1}+2,\ldots ,\lambda _{1}+\lambda _{2}\}}\times \cdots \times S_{\{n-\lambda _{\ell }+1,n-\lambda _{\ell }+2,\ldots ,n\}}$ of $S_n$. This can be seen to agree with the permutation action of $S_n$ on the quotient $S_n /S_{\lambda_1} \times \cdots \times S_{\lambda_m} $.

\subsection{Invariant Hilbert schemes}
For a reductive group $G$, an affine $G$-scheme $W = \Spec(A)$ of finite type over $\CC$, and a multiplicity-finite (but not necessarily finite-dimensional) $G$-representation $\rho$, the \emph{invariant Hilbert scheme} $\Hilb_\rho^G(W)$ parametrizes all closed $G$-subschemes $Z \subseteq W$ with $\Gamma(\mathcal{O}_Z) \cong \rho$ as $G$-representations. We refer to~\cite{AlexeevBrion,BrionInvariantHilb}, noting that the second reference allows $G$ to be disconnected.

The construction in the special case where $G$ is a finite group is quite simple: In this case, there are only finitely many irreducible $G$-representations up to isomorphism, so $\rho$ is automatically finite-dimensional. Hence, the parametrized subschemes $Z \subseteq W$ are zero-dimensional of length $\dim(\rho)$. It turns out that $\Hilb_\rho^G(W)$ can be constructed as the union of some connected components of the $G$-fixed point subscheme of $\Hilb_{\dim(\rho)}(W)$, the latter denoting the usual Hilbert scheme of $\dim(\rho)$ points on~$W$. Since the latter is quasi-projective, so is $\Hilb_\rho^G(W)$.

We collect several foundational results of~\cite{BrionInvariantHilb} valid for any reductive group $G$.

\begin{proposition}[{\cite[Proposition~3.5]{BrionInvariantHilb}}]
    Let $[Z] \in \Hilb_\rho^G(W)$ be a closed point corresponding to a $G$-stable closed subscheme $Z \subseteq W$ defined by an ideal $I \subseteq A$. Then
    \begin{equation*}
        T_{[Z]} \Hilb_\rho^G(W) \cong \Hom_A^G(I,A/I),
    \end{equation*}
    where the former denotes the Zariski tangent space at $[Z]$ and the latter denotes the vector space of $A$-linear $G$-equivariant homomorphisms $I \rightarrow A/I$.
\end{proposition}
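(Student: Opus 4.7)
The plan is to reduce to the familiar computation of the tangent space to the ordinary Hilbert scheme and then extract the $G$-equivariant part. I first recall that by the functor-of-points definition, a tangent vector at $[Z]$ corresponds to a morphism $\Spec(\CC[\varepsilon]/(\varepsilon^2)) \to \Hilb_\rho^G(W)$ whose closed point is $[Z]$. Such a morphism is the same datum as a $G$-stable ideal $\tilde I \subseteq A \otimes_\CC \CC[\varepsilon]/(\varepsilon^2)$ for which the quotient is flat over $\CC[\varepsilon]/(\varepsilon^2)$, reduces to $I$ modulo $\varepsilon$, and has the correct $G$-representation type $\rho$.

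Next I would invoke the standard deformation-theoretic bijection (as in Hartshorne's deformation lemma for the ordinary Hilbert scheme): flat deformations of $A/I$ over the dual numbers are in bijection with $A$-linear homomorphisms $\varphi \colon I \to A/I$, the correspondence being
\begin{equation*}
    \tilde I = \{\, a + \varepsilon b \;:\; a \in I,\; b \in A,\; \bar b = \varphi(a) \,\}.
\end{equation*}
Under this bijection, the $G$-action on deformations induced by the natural $G$-action on $\Hilb(W)$ translates into the conjugation action $(g\cdot\varphi)(x) = g \cdot \varphi(g^{-1}\cdot x)$ on $\Hom_A(I,A/I)$. Hence $\tilde I$ is $G$-stable if and only if $\varphi$ is $G$-equivariant, i.e.\ $\varphi \in \Hom_A^G(I,A/I)$.

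It remains to verify that the representation-type constraint $\Gamma(\mathcal O_{\tilde Z}) \cong \rho \otimes \CC[\varepsilon]/(\varepsilon^2)$ imposes no further restriction. Since $\Hilb_\rho^G(W)$ is open and closed inside the locus in $\Hilb(W)$ of $G$-stable subschemes whose coordinate ring is isomorphic to $\rho$ (it is in fact a union of connected components of the $G$-fixed subscheme, as recalled just before the proposition), any first-order $G$-equivariant deformation of $I$ automatically has the correct isotypic multiplicities: the class of the $G$-module $A/\tilde I$ in the representation ring is locally constant on the base. Thus every $\varphi \in \Hom_A^G(I,A/I)$ lifts to a tangent vector in $\Hilb_\rho^G(W)$ and conversely every tangent vector produces such a $\varphi$.

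The main conceptual point, rather than a technical obstacle, is the last step: knowing that the representation type is locally constant in families of $G$-equivariant quotients. This is precisely why $\Hilb_\rho^G(W)$ picks out a union of components and why the tangent calculation reduces cleanly to the equivariant Hom. Once that is in hand, the argument is a routine equivariant upgrade of the classical $\Hom_A(I,A/I)$ description.
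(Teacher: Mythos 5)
The paper does not prove this proposition at all---it is quoted from Brion's \emph{Invariant Hilbert schemes}---and your argument is essentially the proof given there: tangent vectors are first-order deformations over $\CC[\varepsilon]/(\varepsilon^2)$, these correspond to $A$-linear maps $I \to A/I$ by the classical dictionary (your explicit formula for $\tilde I$ is correct), and $G$-stability of $\tilde I$ translates into $G$-equivariance of $\varphi$. Your last step, justified via the description of $\Hilb_\rho^G(W)$ as a union of connected components of the $G$-fixed locus, is valid in the finite-group setting of this paper; for general reductive $G$ one instead verifies directly, using linear reductivity (exactness of $\Hom^G(M,-)$ applied to $0 \to A/I \to A[\varepsilon]/\tilde I \to A/I \to 0$), that each covariant module of $A[\varepsilon]/\tilde I$ is free over $\CC[\varepsilon]$ of the prescribed rank, which is how the cited reference handles it.
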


Let $\mathrm{Univ}_\rho^G(W) \subseteq \Hilb_\rho^G(W) \times_\CC W$ be the universal family. If $H$ is an algebraic group acting on $W$ by $G$-automorphisms, i.e., such that the actions of $G$ and $H$ on $W$ commute, then $H$ acts on both $\mathrm{Univ}_\rho^G(W)$ and on $\Hilb_\rho^G(W)$ such that the projection $\mathrm{Univ}_\rho^G(W) \rightarrow \Hilb_\rho^G(W)$ is $H$-equivariant \cite[Proposition~3.10]{BrionInvariantHilb}.

Denote by $\rho^G$ the maximal trivial subrepresentation of $\rho$ and by $W//G = \Spec(A^G)$ the categorical quotient, where $A^G$ is the subring of $G$-invariants of $A$.

\begin{proposition}[{\cite[Proposition~3.12]{BrionInvariantHilb}}]\label{prop:BrionHilbertChow}
    There is a projective morphism
    \begin{equation*}
        \gamma \colon \Hilb_\rho^G(W) \longrightarrow \Hilb_{\dim(\rho^G)}(W//G),
    \end{equation*}
    called the \emph{Hilbert--Chow morphism}, sending a $G$-stable ideal $I \subseteq A$ to $I \cap A^G$.
\end{proposition}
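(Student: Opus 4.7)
The plan is to verify this in three stages: well-definedness at closed points, promotion to a morphism via the universal family, and finally projectivity.

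First I would check that the set-theoretic rule $I \mapsto I \cap A^G$ lands in the right Hilbert scheme. Since $G$ is reductive in characteristic zero, the functor of $G$-invariants is exact, so applying it to $0 \to I \to A \to A/I \to 0$ produces a short exact sequence
\begin{equation*}
    0 \longrightarrow I \cap A^G \longrightarrow A^G \longrightarrow (A/I)^G \longrightarrow 0.
\end{equation*}
By the defining property of $[Z] \in \Hilb_\rho^G(W)$, we have $(A/I)^G \cong \rho^G$, which is finite-dimensional because $\rho$ is multiplicity-finite. Hence $I \cap A^G$ cuts out a zero-dimensional subscheme of $W//G = \Spec(A^G)$ of length exactly $\dim(\rho^G)$, which is a closed point of $\Hilb_{\dim(\rho^G)}(W//G)$.

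To upgrade this to an actual morphism, I would use the universal family $p \colon \mathrm{Univ}_\rho^G(W) \to \Hilb_\rho^G(W)$, which is finite and flat. The pushforward $\mathcal{F} := p_*\mathcal{O}_{\mathrm{Univ}_\rho^G(W)}$ is a locally free sheaf of rank $\dim(\rho)$ carrying a fiberwise $G$-action of type $\rho$. Because $G$ is reductive, taking $G$-invariants is exact and commutes with arbitrary (not just flat) base change, so $\mathcal{F}^G$ is a locally free $\mathcal{O}_{\Hilb_\rho^G(W)}$-subsheaf of rank $\dim(\rho^G)$ whose formation is compatible with pulling back along any test scheme $T \to \Hilb_\rho^G(W)$. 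The quotient $\mathcal{F}/\mathcal{F}^G$ is then also locally free, so the natural closed subscheme $\mathrm{Univ}_\rho^G(W)//G \subseteq \Hilb_\rho^G(W) \times_\CC (W//G)$ is a finite flat family of length $\dim(\rho^G)$. By the universal property of $\Hilb_{\dim(\rho^G)}(W//G)$, this family is classified by a morphism $\gamma$ which on closed points implements $I \mapsto I \cap A^G$.

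The main obstacle is projectivity. Since $W//G$ is affine, $\Hilb_{\dim(\rho^G)}(W//G)$ is quasi-projective, and a morphism between quasi-projective schemes is projective if and only if it is proper; so the task reduces to the valuative criterion. Given a DVR $R$ with fraction field $K$, a map $\Spec(K) \to \Hilb_\rho^G(W)$ corresponds to a $G$-stable ideal $I_K \subseteq A \otimes_\CC K$ with $(A \otimes K)/I_K \cong \rho \otimes K$ as $G$-modules, and an extension of $\gamma$ to $\Spec(R)$ provides a flat $R$-subscheme of $W//G \times_\CC \Spec R$ extending $I_K \cap (A^G \otimes K)$. I would then define $I_R \subseteq A \otimes_\CC R$ to be the preimage of $I_K$ under $A \otimes_\CC R \to A \otimes_\CC K$; this is $R$-torsion-free and $G$-stable, and one checks that $(A \otimes R)/I_R$ is a flat $R$-module whose formation of $G$-invariants yields the prescribed family in $W//G$. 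The delicate point, where reductivity is genuinely used, is verifying that the Hilbert function on each $G$-isotypic component is preserved under this flat limit; this follows because invariants of the generic fiber agree with invariants of the special fiber after base change, and each isotypic component can be separated off by the appropriate idempotent in $\CC[G]$ when $G$ is finite, or by applying Reynolds-operator arguments in general. Together these show $\gamma$ is proper, hence projective.
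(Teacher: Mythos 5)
Your three-stage outline---pointwise well-definedness, promotion to a morphism via $G$-invariants of the universal family, and properness via the valuative criterion---is the standard route, and the first two stages are essentially sound. (The paper itself cites this proposition from Brion's paper without reproducing the proof, so there is no internal argument to compare against.) The gap lies in the final stage. After forming $I_R$ as the preimage of $I_K$ in $A \otimes_\CC R$, flatness of $(A \otimes R)/I_R$ over $R$ is automatic from torsion-freeness over a DVR, but you must also show that each isotypic component of $(A \otimes R)/I_R$ is \emph{finitely generated} over $R$; otherwise the special fiber could have an incorrect or even infinite Hilbert function and would fail to define a point of $\Hilb_\rho^G(W)$. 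The facts you invoke---compatibility of invariants with base change, the Reynolds operator, idempotent projectors for finite $G$---do not supply this on their own: a direct summand of a flat but non-finite $R$-module need not be finite, and a torsion-free $R$-module of finite generic rank (such as $K \oplus R$ inside $K^2$) can fail to be finitely generated, so ``the Hilbert function is preserved under the flat limit'' is precisely the claim that needs proof, not a consequence of exactness of invariants.

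What is actually required is the classical theorem that for a reductive group $G$ acting on a finitely generated $\CC$-algebra $A$, each isotypic component $A_\lambda$ is a finitely generated $A^G$-module. Granting this, the $\lambda$-isotypic component of $(A \otimes R)/I_R$ is a finite module over $(A^G \otimes R)/J_R$, where $J_R$ is the given $R$-point of $\Hilb_{\dim(\rho^G)}(W//G)$; since that algebra is finite flat over $R$, each isotypic component is finite and flat over $R$, hence free of the correct rank, and $I_R$ does define an $R$-point of $\Hilb_\rho^G(W)$. Your argument is therefore correct as written when $G$ is finite, since then $A$ is itself finite over $A^G$ and the isotypic components are automatically finite $A^G$-modules, which covers the case $G = S_n$ actually used in the paper. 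But for general reductive $G$ the finite generation of isotypic components is a genuinely nontrivial input, and the appeal to ``Reynolds-operator arguments'' does not replace it. You should also say a word about uniqueness in the valuative criterion (separatedness), though this is routine: any $R$-flat extension of $I_K$ must equal the preimage of $I_K$, so it is unique.
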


In the special case $\dim(\rho^G) = 1$, naturally $\Hilb_{1}(W//G) = W//G$.

\subsection{The torus action, reduction to fixed points}
From now on, we let $G = S_n$ and $W = \CC^n$ with the action given by $\sigma \cdot (a_1,\ldots,a_n) = (a_{\sigma^{-1}(1)},\ldots,a_{\sigma^{-1}(n)})$. Let $\mathbb{G}_a = (\CC,+)$ denote the additive group. Then $\mathbb{G}_a^n$ acts on $\Hilb_r(\CC^n)$ by translating the support of an ideal. Clearly, the actions of both $\mathrm{GL}_n = \mathrm{GL}_n(\CC)$ and $\mathbb{G}_a^n$ on $\Hilb_{\dim(\rho)}(\CC^n)$ do not preserve $S_n$-stable ideals. What is left of these actions on $\Hilb^{S_n}_\rho(\CC^n)$ are the centralizers of $S_n$ in $\mathrm{GL}_n$ and $\mathbb{G}_a^n$.
In the case of $\mathbb{G}_a^n$ the centralizer is clearly $\mathbb{G}_a \hookrightarrow \mathbb{G}_a^n$, embedded via the diagonal; in other words, translating a symmetric ideal along a diagonal vector $(a,a,\ldots,a) \in \CC^n$ does not break the symmetry.
For $\mathrm{GL}_n$, the answer is the following.
\begin{lemma}
    Let $n \geq 2$. The centralizer of $S_n$ in $\mathrm{GL}_n$ is a $2$-dimensional torus
    \begin{equation*}
        \mathrm{GL}_n^{S_n} = \left\{ a\mathbbm{1}_n + b\mathbf{1}_n: a,b \in \CC \right\} \cap \mathrm{GL}_n,
    \end{equation*}
    where $\mathbbm{1}_n$ is the identity matrix and $\mathbf{1}_n$ is the matrix filled with ones only. An explicit isomorphism is given by
    \begin{equation*}
        (\CC^\ast)^2 \overset{\cong}{\longrightarrow} \mathrm{GL}_n^{S_n}, \ (s,t) \mapsto t \mathbbm{1}_n + \frac{(s-1)t}{n} \mathbf{1}_n.
    \end{equation*}
\end{lemma}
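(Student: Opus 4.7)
The plan is to exploit Schur's lemma after decomposing the permutation representation on $\CC^n$ into its isotypic components. Recall that, as an $S_n$-representation, $\CC^n$ splits as the direct sum of the trivial summand spanned by $v_0 = (1,\ldots,1)$ and the standard representation (the hyperplane $\sum x_i = 0$). These two irreducibles are non-isomorphic, so by Schur's lemma every $S_n$-equivariant endomorphism of $\CC^n$ is uniquely determined by the two scalars by which it acts on these summands. In particular, the centralizer of $S_n$ in $\mathrm{End}(\CC^n)$ is two-dimensional.

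Next, I would exhibit an explicit basis. The matrices $\mathbbm{1}_n$ and $\mathbf{1}_n$ both commute with $S_n$ and are linearly independent, since $\mathbbm{1}_n$ acts as $1$ on both summands, while $\mathbf{1}_n$ acts as $n$ on the trivial summand (because $\mathbf{1}_n v_0 = n v_0$) and as $0$ on the standard summand. Hence every element of the centralizer has the unique form $a\mathbbm{1}_n + b\mathbf{1}_n$ with $a, b \in \CC$. Simultaneous diagonalization in the isotypic decomposition shows that its eigenvalues are $a$ (with multiplicity $n-1$, on the standard summand) and $a + nb$ (with multiplicity $1$, on the trivial summand), so invertibility is equivalent to $a \neq 0$ and $a + nb \neq 0$.

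Finally, for the proposed parameterization $(s,t) \mapsto t\mathbbm{1}_n + \frac{(s-1)t}{n}\mathbf{1}_n$ we have $a = t$ and $b = (s-1)t/n$, hence the eigenvalues become $t$ and $a + nb = t + (s-1)t = st$. Both are nonzero exactly when $(s,t) \in (\CC^*)^2$, and the pair of eigenvalues $(t, st)$ determines $(s,t)$ uniquely, yielding the claimed isomorphism of tori. There is no real obstacle here: the argument amounts to Schur's lemma together with a one-line eigenvalue computation.
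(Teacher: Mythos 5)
Your argument is correct, and since the paper states this lemma without proof, there is nothing to conflict with: the decomposition $\CC^n = \langle (1,\ldots,1)\rangle \oplus \{\sum_i x_i = 0\}$ into non-isomorphic irreducibles plus Schur's lemma is exactly the standard route, and your eigenvalue computation ($a$ on the standard summand, $a+nb$ on the trivial one) gives the right invertibility criterion and identifies the parameterization $(s,t)\mapsto t\mathbbm{1}_n + \frac{(s-1)t}{n}\mathbf{1}_n$ with the eigenvalue pair $(st,t)$, which is visibly a bijective homomorphism $(\CC^*)^2 \to \mathrm{GL}_n^{S_n}$. One small point in your favor: your formulation of the invertibility condition, $a \neq 0$ and $a + nb \neq 0$ with $b \in \CC$ arbitrary, is the accurate description of $\mathrm{GL}_n^{S_n}$ (it includes the scalar matrices, corresponding to $s=1$), whereas the displayed set in the lemma with $a,b \in \CC^*$ is slightly imprecise; your version matches the image of the parameterization. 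If you want to be fully explicit you could add one line verifying that the map is a group homomorphism, i.e.\ that $(s_1t_1s_2t_2, t_1t_2) = ((s_1s_2)(t_1t_2), t_1t_2)$ in the eigenbasis, but this is immediate.
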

For the remainder of the paper, we will write $T \coloneqq (\CC^\ast)^2$ for the $2$-dimensional torus and refer to the induced action on $\Hilb^{S_n}_\rho(\CC^n)$ simply as \emph{the $T$-action}. In most situations that we will encounter it suffices to consider only the action of one of the two copies of $\CC^\ast$ embedded into $T$ via $s=1$ or $t=1$.

One of the most useful facts about the $T$-action is that every orbit closure contains a $T$-fixed point \cite[Remarks~3.13(iii)]{BrionInvariantHilb}. In particular, smoothness of $\Hilb^{S_n}_\rho(\CC^n)$ is equivalent to smoothness at every $T$-fixed point because the singular locus is closed. The same is true for every closed geometric property, for example non-reducedness.

\subsection{The associated graded ideal}\label{subsec:assocGraded}
From now on, we abbreviate $\HH_\rho \coloneqq \Hilb^{S_n}_\rho(\CC^n)$.
Let $\CC^\ast$ act on $\CC^n$ by the usual scalar multiplication, inducing the standard grading on $\CC[x_1,\ldots,x_n]$. For a Hilbert function $h$, let $H_{\rho,h}$ denote the locally closed subset of $\HH_\rho$ consisting of all ideals whose associated graded ideal has Hilbert function $h$. We endow $H_{\rho,h}$ with the reduced scheme structure. Then there is a natural map
\begin{equation*}
H_{\rho,h} \longrightarrow \Hilb^{S_n \times \CC^\ast}_{\rho,h}(\CC^n), \ I \mapsto \gr I.    
\end{equation*}
Here, $\gr I$ denotes the associated graded ideal with respect to the increasing filtration by degree and the target Hilbert scheme denotes the (finite) disjoint union of the $(S_n \times \CC^\ast)$-invariant Hilbert schemes running over all $(S_n \times \CC^\ast)$-representations restricting to $\rho$ as an ungraded $S_n$-representation and to $h$ as a $\CC^\ast$-representation (i.e., having Hilbert function $h$).

Formally, this map can be constructed as follows (this is inspired by \cite[Section~4.1]{Cartwright2009HilbertSchemeOf8Points}). Let $\mathcal{Z}_\rho \subseteq \HH_\rho \times \CC^n \rightarrow \HH_\rho$ be the universal family. Base-changing by the embedding $H_{\rho,h} \hookrightarrow \HH_\rho$, we obtain a closed subscheme $\mathcal{Z}_{\rho,h} \subseteq H_{\rho,h} \times \CC^n$, defined by a quasi-coherent ideal sheaf $\mathcal{I} \subseteq \mathcal{O}_{H_{\rho,h}}[x_1,\ldots,x_n]$, and the composition with the projection gives a finite flat family $\mathcal{Z}_{\rho,h} \rightarrow H_{\rho,h}$. Then the associated graded ideal $\gr \mathcal{I} \subseteq \mathcal{O}_{H_{\rho,h}}[x_1,\ldots,x_n]$ defines a closed subscheme $\Tilde{\mathcal{Z}}_{\rho,h} \subseteq H_{\rho,h} \times \CC^n$, and the finite flat family $\Tilde{\mathcal{Z}}_{\rho,h} \rightarrow H_{\rho,h}$ provides the desired map.

\section{$S_n$-Orbits and Tanisaki Ideals}\label{section:Tanisaki}

\subsection{Radical ideals} A first natural question is which representations $\rho$ admit a smooth subscheme, i.e., when does $\mathcal{H}_\rho$ contain a radical ideal? This is essentially answered by the Chinese remainder theorem.

\begin{lemma}\label{lemma:radicalIdeals}
    Let $n \geq 2$ and let $\rho$ be any $S_n$-module. The invariant Hilbert scheme $\mathcal{H}_\rho$ contains a radical ideal if and only if $\rho$ is a direct sum of permutation modules, i.e.,
    \begin{equation*}
        \rho \cong M^{\lambda^1} \oplus \cdots \oplus M^{\lambda^N},
    \end{equation*}
    for some partitions $\lambda^1, \ldots, \lambda^N$ of $n$.
\end{lemma}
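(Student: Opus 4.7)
My plan is to invoke the Chinese remainder theorem, which reduces the question to understanding the $S_n$-module structure of the coordinate ring of a single $S_n$-orbit in $\CC^n$. I would prove the two implications separately.

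For the forward direction, assume $I \in \HH_\rho$ is radical. Then $Z \coloneqq V(I) \subseteq \CC^n$ is a finite $S_n$-stable set, so it decomposes as a disjoint union $Z = \bigsqcup_{i=1}^N O_i$ of $S_n$-orbits. By the Chinese remainder theorem, there is an $S_n$-equivariant isomorphism of $\CC$-algebras
\[ P/I \;\cong\; \prod_{i=1}^N \CC^{O_i}. \]
For each $i$ I would pick a representative $p_i \in O_i$, whose stabilizer in $S_n$ is conjugate to the Young subgroup $S_{\lambda^i}$, where $\lambda^i$ records the multiplicities of the distinct coordinate values of $p_i$. Then $O_i \cong S_n/S_{\lambda^i}$ as $S_n$-sets, so $\CC^{O_i} \cong \mathrm{Ind}_{S_{\lambda^i}}^{S_n}\mathbf{1} = M^{\lambda^i}$, which yields the claimed decomposition of $\rho$.

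For the converse, assume $\rho \cong \bigoplus_{i=1}^N M^{\lambda^i}$. I would construct the required radical ideal by hand: for each $i$, choose a point $p_i \in \CC^n$ whose coordinate multiplicities realize the partition $\lambda^i$ (for instance, by listing $\lambda^i_j$ equal entries in blocks). Since two $S_n$-orbits in $\CC^n$ coincide precisely when their multisets of coordinates agree, I can arrange the orbits $S_n \cdot p_i$ to be pairwise disjoint by choosing the coordinate values used in different $p_i$'s from disjoint subsets of $\CC$. Then $Z \coloneqq \bigsqcup_i S_n \cdot p_i$ is a finite $S_n$-stable reduced subscheme of $\CC^n$, and the argument of the previous paragraph shows that its radical vanishing ideal $I(Z)$ is an element of $\HH_\rho$.

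There is no genuine obstacle; the only points to be careful about are the identification $\CC^{O_i} \cong M^{\lambda^i}$, which is essentially the definition of the permutation module as a trivial representation induced from a Young subgroup, and the pairwise disjointness of the constructed orbits in the converse, which is guaranteed by a sufficiently generic choice of coordinate values.
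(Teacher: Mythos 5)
Your proof is correct and follows essentially the same route as the paper: reduce via the Chinese remainder theorem to a single $S_n$-orbit, then identify the coordinate ring of an orbit of type $\lambda$ with the permutation module $M^\lambda$. The only difference is that you spell out the converse (constructing a radical ideal from the given $\lambda^i$ by choosing points with disjoint coordinate value sets), which the paper leaves implicit; this is a harmless and welcome elaboration.
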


\begin{proof}
    Let $I \subseteq P = \CC[x_1,\ldots,x_n]$ be a radical symmetric ideal such that $P/I \cong \rho$ as $S_n$-modules. The vanishing set of $I$ is a symmetric point configuration, consisting of some number $N$ of disjoint $S_n$-orbits with vanishing ideals $I_1, \ldots, I_N$. The Chinese remainder theorem provides a canonical algebra isomorphism $P/I \cong \bigoplus_{i=1}^N P/I_i$ which is $S_n$-equivariant, so it suffices to prove the case of only one orbit. Let $q_1, \ldots, q_k \in \CC^n$ be the distinct points of this orbit and let $\lambda = (\lambda_1 \geq \lambda_2 \geq \cdots \geq \lambda_m \geq 1) \vdash n$ be the corresponding partition, i.e., each $q_i$ has $m$ distinct coordinates which occur with multiplicities $\lambda_1, \ldots, \lambda_m$. Again by the Chinese remainder theorem we obtain a canonical isomorphism $P/I \cong \bigoplus_{i=1}^k P/\mm_{q_i}$ of algebras, in particular of vector spaces. We define an $S_n$-module structure on the direct sum by
    \begin{equation*}
        \overline{f} \in P/\mm_{q_i} \quad \Rightarrow \quad \sigma(\overline{f}) \coloneqq \overline{\sigma(f)} \in P/\mm_{\sigma(q_i)}.
    \end{equation*}
    In this way, the Chinese remainder isomorphism is also $S_n$-equivariant. The canonical inclusion $\CC \subseteq P/\mm_{q_i}$ is an isomorphism, so $\bigoplus_{i=1}^k P/\mm_{q_i} \cong M^\lambda$ as $S_n$-modules.
\end{proof}

\begin{definition}
    Let $\rho$ be a direct sum of permutation modules. The closure of the locus of radical ideals in $\HH_\rho$ is called the \emph{smoothable component}.
\end{definition}

The smoothable component is indeed an irreducible component. The reason is that every radical ideal in the usual Hilbert scheme of points $\Hilb_{\dim \rho}(\CC^n)$ has an open neighborhood containing only radical ideals, so the same is true for the closed subscheme $\Hilb_\rho^{S_n}(\CC^n)$. Moreover, every radical ideal is a smooth point of $\HH_\rho$ because the same is true for the usual Hilbert scheme of points, and taking invariants preserves smoothness by~\cite[Theorem~5.2]{Fogarty1973Fixed}.

\begin{remark}
    Every symmetric \emph{monomial} ideal lies in the smoothable component of some $\HH_\rho$ where $\rho$ is a direct sum of permutation modules. The reason is that the well-known procedure of \emph{distraction}, which is used to prove that any monomial ideal in the usual Hilbert scheme of points lies in the smoothable component, leaves the symmetry intact. This can be seen from the proof of \cite[Lemma~18.10]{Miller2005CombCommAlg}, see also Remark~18.13 in \emph{loc. cit.} Distraction proves in particular that every irreducible component of the usual Hilbert scheme of points intersects the smoothable component. We do not know whether this is also true for $\HH_\rho$ whenever $\rho$ is a direct sum of permutation modules. Let us note that a symmetric monomial ideal is rarely a torus-fixed point of $\HH_\rho$. In fact, the only symmetric monomial ideals which are also torus-fixed points are the powers of the homogeneous maximal ideal.
\end{remark}

\begin{theorem}\label{thm:singleOrbit}
    Let $\lambda \vdash n$ be a partition with $m$ non-zero parts. The invariant Hilbert scheme $\HH_{M^\lambda}$ is connected and has dimension $m$. Moreover, the Hilbert--Chow morphism $\gamma \colon \HH_{M^\lambda} \longrightarrow \CC^n/S_n$ is finite and all fibers over points in $\overline{O(\lambda)}/S_n$ are singletons. Finally,
    \begin{equation*}
        \overline{O(\lambda)}/S_n \subseteq \mathrm{im}(\gamma) \subseteq \bigcup_{\mu \trianglerighteq \lambda} O(\mu)/S_n.
    \end{equation*}
\end{theorem}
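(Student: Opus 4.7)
The plan is to prove the four assertions---image containment, connectedness, finiteness with singleton fibers over $\overline{O(\lambda)}/S_n$, and dimension $m$---in roughly this order, with each feeding into the next.

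For the image containment, let $I \in \HH_{M^\lambda}$. Then $\sqrt{I}$ cuts out a single $S_n$-orbit by the complete-intersection lemma for the power sums, of some orbit type $\mu$. Lemma~\ref{lemma:radicalIdeals} gives $P/\sqrt{I} \cong M^\mu$, so $M^\mu$ is an $S_n$-quotient of $M^\lambda$. Since $S^\mu$ is a constituent of $M^\mu$ but appears in $M^\lambda$ only when $\mu \trianglerighteq \lambda$ (Young's rule), we conclude $\mu \trianglerighteq \lambda$, giving the upper containment. Conversely, Lemma~\ref{lemma:radicalIdeals} supplies a radical ideal in $\HH_{M^\lambda}$ for every type-$\lambda$ orbit, so $O(\lambda)/S_n \subseteq \mathrm{im}(\gamma)$, and projectivity of $\gamma$ (Proposition~\ref{prop:BrionHilbertChow}) makes the image closed, hence containing $\overline{O(\lambda)}/S_n$. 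For connectedness, a $T$-fixed ideal in $\HH_{M^\lambda}$ is supported at the origin (by the diagonal torus) and homogeneous (by the scaling $\CC^\ast$), hence equals $I_\lambda$ by the Bergeron--Garsia uniqueness recalled in the introduction. Since every $T$-orbit closure meets the fixed locus, $I_\lambda$ lies in every irreducible component, yielding connectedness.

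The heart of the argument is finiteness plus singleton fibers over $\overline{O(\lambda)}/S_n$. Fix $\bar q \in \mathrm{im}(\gamma)$ of orbit type $\mu$ and a representative $q_1$ of the corresponding orbit. Primary decomposition together with $S_n$-equivariance identifies $\gamma^{-1}(\bar q)$ with the scheme of $S_\mu$-stable ideals $I_{q_1}$ in the local ring at $q_1$ of colength $\ell = \dim M^\lambda/|O(\mu)|$ whose $S_\mu$-equivariant quotient algebras induce up to $M^\lambda$. For $\mu = \lambda$ we have $\ell = 1$, forcing $I_{q_1} = \mm_{q_1}$ and $I$ the unique radical ideal. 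For $\mu \triangleright \lambda$ lying in the closure of $O(\lambda)/S_n$, a flat limit of a one-parameter smoothable family of type-$\lambda$ orbits specializing to $\bar q$ supplies a candidate $I^\mu_{\bar q}$. For uniqueness, translate $q_1$ to the origin and apply the scaling $\CC^\ast$-action of $T$: any $I \in \gamma^{-1}(\bar q)$ degenerates to its associated graded, a homogeneous $S_\mu$-stable ideal with the same $S_\mu$-equivariant quotient. The main obstacle is a two-step rigidity statement: (i) a Bergeron--Garsia-type uniqueness at the Young subgroup $S_\mu$ forcing this graded ideal to be unique, which I expect to follow by iterating Bergeron--Garsia inside each block of $S_\mu$; and (ii) a tangent-space computation, using the formula $T_{[I^\mu_{\bar q}]} = \Hom_P^{S_n}(I^\mu_{\bar q}, P/I^\mu_{\bar q})$ together with the higher Specht basis, to show that $I^\mu_{\bar q}$ is the unique ungraded lift. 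For $\mu \trianglerighteq \lambda$ outside the closure of $O(\lambda)/S_n$, finiteness (though not necessarily singletons) follows from the same local identification together with a dimension bound on the corresponding $S_\mu$-invariant local Hilbert scheme.

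Once $\gamma$ is known to be finite, the dimension statement $\dim \HH_{M^\lambda} = m$ follows: $\dim \HH_{M^\lambda} = \dim \mathrm{im}(\gamma) \leq m$, since $\mu \trianglerighteq \lambda$ forces $\len(\mu) \leq m$ and $\dim O(\mu)/S_n = \len(\mu)$, with the matching lower bound supplied by the smoothable component, which has dimension $m$ because it is parametrized generically by the $m$-dimensional space of unordered type-$\lambda$ configurations.
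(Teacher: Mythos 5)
Your treatment of the outer layers matches the paper: the image containments (via $P/\sqrt{I}\cong M^\mu$, the $S_n$-equivariant surjection $M^\lambda \twoheadrightarrow M^\mu$ forcing $\mu \trianglerighteq \lambda$, plus properness of $\gamma$ for the lower inclusion), the connectedness argument through the unique $T$-fixed point $I_\lambda$, and the deduction $\dim \HH_{M^\lambda}=m$ from finiteness of $\gamma$ together with $\len(\mu)\leq m$ are all essentially the arguments in the paper. The reduction of a fiber of $\gamma$ to $S_\mu$-stable ideals at a single point via the Chinese remainder theorem, and the case $\mu=\lambda$ where the colength is $1$, are also fine and parallel the proof of Proposition~\ref{prop:HilbChowInjective}.

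The genuine gap is in your two-step ``rigidity'' plan, which is exactly where the whole difficulty of the theorem sits. Step (ii) --- showing that the candidate ideal is ``the unique ungraded lift'' of its associated graded by a tangent-space computation --- does not work: the tangent space $\Hom_P^{S_n}(I^\mu_{\bar q},P/I^\mu_{\bar q})$ at your candidate is at least $m$-dimensional (the point lies on the smoothable component), and more fundamentally, no tangent-space computation at a single point can rule out other ideals with the same associated graded and the same support; the fiber of the ``associated graded'' map over a fixed graded ideal is in general positive-dimensional, and the fiber of $\gamma$ over $\bar q\neq 0$ is not torus-stable, so you cannot degenerate inside it. The paper avoids this entirely: it proves \emph{directly} that any (inhomogeneous) ideal $I_q$ in the fiber is contained in the translated apolar ideal $I'_q$ built from Specht modules $S^{\mu^1},\ldots,S^{\mu^k}$ in the separate variable blocks, using the generalization of Lemma~\ref{lem:higherSpecht} to products of symmetric groups (every higher Specht polynomial is divisible by the corresponding product of Specht polynomials), the injectivity of the Specht-spanned module $V$ in $P/I_q$ (coming from $S^\lambda \hookrightarrow P/I$ and the factorization of shape-$\lambda$ Specht polynomials), and the elimination argument made possible by $\mm^N \subseteq I_0$; equality $I_q=I'_q$ then follows from the count $\dim_\CC P/I'_q=\binom{l_1}{\mu^1}\cdots\binom{l_k}{\mu^k}=\dim_\CC P/I_q$. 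Your step (i) also quietly assumes you know the $S_\mu$-module structure of the local quotient, whereas a priori you only know its induction to $S_n$, which does not pin it down; the paper's containment-plus-dimension argument sidesteps this. Finally, your finiteness claim for orbit types outside $\overline{O(\lambda)}$ rests on an unspecified ``dimension bound on the $S_\mu$-invariant local Hilbert scheme,'' which is not formal (punctual loci of invariant Hilbert schemes can be positive-dimensional); the paper instead obtains finiteness globally from the singleton fiber over the origin (Proposition~\ref{prop:singleton}) combined with projectivity and $T$-equivariance of $\gamma$ and upper semicontinuity of fiber dimension, since the Tanisaki ideal lies in every $T$-orbit closure (Corollary~\ref{corollary:HilbChowFinite}). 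As it stands, your proposal establishes the easy parts but not the finiteness or the singleton-fiber statement.
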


The proof of Theorem~\ref{thm:singleOrbit} will be given in the next subsection. As a consequence, $\HH_{M^\lambda}$ is irreducible if and only if so is its image under the Hilbert--Chow morphism if and only if the first inclusion is an equality.

\begin{corollary}\label{corollary:hookPartition}
    Let $\lambda \vdash n$ be a hook partition, i.e., $\lambda = (k,1,1,\ldots,1)$. Then $\HH_{M^\lambda}$ is irreducible.
\end{corollary}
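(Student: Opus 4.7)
My plan is to apply the remark following Theorem~\ref{thm:singleOrbit}, which reduces the irreducibility of $\HH_{M^\lambda}$ to the equality $\overline{O(\lambda)}/S_n = \mathrm{im}(\gamma)$. Since that theorem already sandwiches
\begin{equation*}
\overline{O(\lambda)}/S_n \subseteq \mathrm{im}(\gamma) \subseteq \bigcup_{\mu \unrhd \lambda} O(\mu)/S_n,
\end{equation*}
it suffices to show that the outer two sets coincide for a hook $\lambda = (k, 1^{n-k})$, i.e., that $O(\mu) \subseteq \overline{O(\lambda)}$ inside $\CC^n$ for every $\mu \unrhd \lambda$.

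The first step is the combinatorial observation that $\mu \unrhd (k,1^{n-k})$ if and only if $\mu_1 \geq k$. The ``only if'' direction is immediate from $i = 1$, and for the converse I would argue that, assuming $\mu_1 \geq k$, the partial sum $\sum_{j \leq i} \mu_j$ exceeds $\sum_{j \leq i} \lambda_j = k + (i-1)$ for all $2 \leq i \leq \len(\mu)$ because each $\mu_j \geq 1$, while for $i > \len(\mu)$ both partial sums saturate at $n$.

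The second step is a straightforward geometric degeneration. A point of orbit-type $(k,1^{n-k})$ in $\CC^n$ has a single coordinate value occurring with multiplicity $k$ together with $n-k$ further pairwise distinct values. Given any $\mu$ with $\mu_1 \geq k$, I would construct a one-parameter family degenerating such a point to one of orbit-type $\mu$ by collapsing $\mu_1 - k$ of the singleton values onto the multiplicity-$k$ value and partitioning the remaining $n - \mu_1$ singletons into blocks of sizes $\mu_2, \mu_3, \ldots$. This is possible because $\sum_{j \geq 2} \mu_j = n - \mu_1$ and each $\mu_j \geq 1$, so there are exactly enough singletons to accommodate the grouping. Hence $O(\mu) \subseteq \overline{O(\lambda)}$ for every $\mu \unrhd \lambda$, which closes the sandwich and forces $\mathrm{im}(\gamma) = \overline{O(\lambda)}/S_n$.

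The only ``obstacle'' is pinning down the dominance characterization cleanly; once the observation $\mu \unrhd (k,1^{n-k}) \Leftrightarrow \mu_1 \geq k$ is in hand, the geometric coarsening step is immediate and no further machinery beyond Theorem~\ref{thm:singleOrbit} is required.
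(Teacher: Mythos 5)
Your proposal is correct and follows essentially the same route as the paper: the paper's proof likewise invokes the sandwich of Theorem~\ref{thm:singleOrbit} and observes that for a hook $\lambda$ one has $\overline{O(\lambda)}/S_n = \bigcup_{\mu \trianglerighteq \lambda} O(\mu)/S_n$, which forces the first inclusion to be an equality. Your write-up merely makes explicit the two facts the paper leaves implicit, namely $\mu \trianglerighteq (k,1^{n-k}) \Leftrightarrow \mu_1 \geq k$ and the degeneration $O(\mu) \subseteq \overline{O(\lambda)}$ obtained by merging coordinate values.
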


\begin{proof}
    Since $\lambda$ is a hook partition, $\overline{O(\lambda)}/S_n = \bigcup_{\mu \trianglerighteq \lambda} O(\mu)/S_n$. Therefore, $\operatorname{im}(\gamma) = \overline{O(\lambda)}/S_n$, and since the fibers of $\gamma$ over the latter are all singletons, $\gamma$ is a homeomorphism onto its irreducible image. Thus, $\HH_{M^\lambda}$ is irreducible.
\end{proof}

Consider the linear map $\CC^m \hookrightarrow \CC^n$ which sends $(a_1,\ldots,a_m)$ to the $n$-tuple of which the first $\lambda_1$ entries equal $a_1$, the next $\lambda_2$ entries equal $a_2$, and so on. The image of the composition $\varphi_\lambda \colon \CC^m \hookrightarrow \CC^n \twoheadrightarrow \CC^n/S_n$ is $\overline{O(\lambda)}/S_n$, and $\varphi_\lambda \colon \CC^m \rightarrow \overline{O(\lambda)}/S_n$ is a finite morphism. Moreover, $\varphi_\lambda$ factors uniquely as
\begin{equation*}
    \begin{tikzcd}
        \CC^m \ar[r,"\varphi_\lambda"] \ar[d,two heads]
            & \overline{O(\lambda)}/S_n \\
        \CC^m/S_{m_1} \times \cdots \times S_{m_k}, \ar[ur,dotted]
    \end{tikzcd}
\end{equation*}
where $k = |\{\lambda_1, \ldots, \lambda_m\}|$ is the number of distinct parts of $\lambda$ and $m_i$ is the multiplicity of $\lambda_i$ in $\lambda$, so $\sum_{i=1}^k m_i = m$. The quotient $\CC^m/S_{m_1} \times \cdots \times S_{m_k}$ is abstractly isomorphic to $\CC^m$. Moreover, the dotted arrow is a bijection on closed points, and it is a finite morphism since so is $\varphi_\lambda$. Therefore, the dotted arrow is a finite, bijective, birational morphism from a smooth variety and therefore the normalization of $\overline{O(\lambda)}/S_n$.

Let $Z_\lambda$ be the smoothable component of $\HH_{M^\lambda}$, endowed with the reduced scheme structure. By Theorem~\ref{thm:singleOrbit}, restricting the Hilbert--Chow morphism to $Z_\lambda$, we obtain a finite, bijective, birational map $Z_\lambda \longrightarrow \overline{O(\lambda)}/S_n$. Composing it with the normalization $(Z_\lambda)^\nu$ of $Z_\lambda$, we obtain the normalization of $\overline{O(\lambda)}/S_n$ above. In particular,
\begin{equation*}
    (Z_\lambda)^\nu \cong \CC^m,
\end{equation*}
and the normalization map $(Z_\lambda)^\nu \rightarrow Z_\lambda$ is a bijection.

However, it is not unreasonable that something much stronger holds.

\begin{question}\label{question:singleOrbit}
    Let $\lambda \vdash n$ be a partition with $m$ non-zero parts. Is $\HH_{M^\lambda} \cong \CC^m$?
\end{question}

Note that Question~\ref{question:singleOrbit} has an affirmative answer if and only if $\HH_{M^\lambda}$ is smooth. This, in turn, can be checked at a single point of $\HH_{M^\lambda}$, the \emph{Tanisaki ideal}, as we shall soon see.

Evidence for a positive answer to Question~\ref{question:singleOrbit} is provided by Theorem~\ref{thm:singleOrbit}, Corollary~\ref{corollary:hookPartition} and Proposition~\ref{prop:m=2} below, the latter proving the case $m=2$. The answer is positive also in the cases $m=1$ and $m=n$ where the proof is much simpler. In fact, for $m=1$ and $m=n$ the Tanisaki ideals are, respectively, the homogeneous maximal ideal $\mm$ and the complete intersection $(p_1,p_2,\ldots,p_n)$.

One more piece of evidence is provided by Proposition~\ref{prop:reducedPoint}, showing that the $\CC^\ast$-fixed point subscheme of $\HH_{M^\lambda}$, parametrizing the \emph{homogeneous} symmetric ideals $I$ with $P/I \cong M^\lambda$, is smooth, namely a reduced point. This would also be a direct consequence of a positive answer to Question~\ref{question:singleOrbit} by the fact that taking invariants preserves smoothness \cite[Theorem~5.2]{Fogarty1973Fixed}.

\subsection{Tanisaki ideals} We start with the following lemma which can be found in~\cite[Theorem~10.2]{nino2019algorithmic}. We include a proof since it is of central importance for Theorem~\ref{thm:singleOrbit}. 

\begin{lemma}\label{lem:higherSpecht}
    Let $F_T^S$ be a higher Specht polynomial with $T$ any Young tableau of shape $\lambda \vdash n$ and $S$ a standard Young tableau of the same shape. Then $F_T^S$ is divisible by the usual Specht polynomial $\spe_T$. In particular, the Specht ideal of type~$\lambda$ contains the entire $\lambda$-isotypic component of the polynomial ring $P$.
\end{lemma}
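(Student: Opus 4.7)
The plan is to prove the divisibility by showing that $F_T^S$ transforms as a sign character under the column stabilizer $C(T)$. Concretely, for any $\pi \in C(T)$, the substitution $\sigma' = \pi \sigma$ in the defining sum gives
\begin{equation*}
    \pi \cdot F_T^S = \sum_{\sigma \in C(T), \tau \in R(T)} \sgn(\sigma) \, \pi \sigma \tau \cdot x_T^{i(S)} = \sgn(\pi) \sum_{\sigma' \in C(T), \tau \in R(T)} \sgn(\sigma') \, \sigma' \tau \cdot x_T^{i(S)} = \sgn(\pi) F_T^S,
\end{equation*}
using that $\sgn(\sigma) = \sgn(\pi)\sgn(\sigma')$ and that $C(T)$ is a group.

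Next, I would invoke the standard fact that a polynomial which is antisymmetric under permutations of a set of variables is divisible by the Vandermonde polynomial in those variables. Since $C(T) = S_{T_1} \times S_{T_2} \times \cdots \times S_{T_l}$ decomposes as a product of symmetric groups acting on the column index sets, applying this fact separately to each column yields that $F_T^S$ is divisible by $\Delta_{T_i}$ for each $i$. The Vandermonde polynomials $\Delta_{T_i}$ involve pairwise disjoint sets of variables and are therefore pairwise coprime in the unique factorization domain $P$, so $F_T^S$ is divisible by their product $\spe_T$.

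For the second claim, I would use the graded $S_n$-module isomorphism $P \cong P/(p_1,\ldots,p_n) \otimes_\CC \CC[p_1,\ldots,p_n]$ together with Terasoma--Yamada's theorem, which together imply that the $\lambda$-isotypic component of $P$ is spanned by products $F_T^S \cdot f$ where $T, S$ are standard tableaux of shape $\lambda$ and $f$ is a symmetric polynomial. By the first part, each such product is a multiple of $\spe_T$, hence lies in the Specht ideal of type~$\lambda$.

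The only real subtlety is verifying the antisymmetry computation carefully (and noting that $S$ is allowed to be arbitrary for this argument — standardness of $S$ plays no role in the divisibility itself, only in the Terasoma--Yamada basis statement used for the global statement about isotypic components). Beyond that, the argument is essentially a reduction to the classical Vandermonde-divides-antisymmetric principle, applied column-by-column.
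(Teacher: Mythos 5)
Your argument for the divisibility is essentially the same as the paper's: you show $F_T^S$ transforms by the sign character under $C(T)$ (the paper checks this only for a transposition $\alpha$, which is equivalent) and then deduce divisibility by each column Vandermonde, hence by $\spe_T$. The small variation is in the ``in particular'' part, where you invoke the free-module decomposition $P \cong (P/(p_1,\ldots,p_n)) \otimes_\CC \CC[p_1,\ldots,p_n]$ and the Terasoma--Yamada basis, whereas the paper instead remarks that the same antisymmetry computation shows the Young symmetrizer applied to any polynomial yields a multiple of $\spe_T$; both routes are correct.
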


\begin{proof}
    It suffices to prove that $F_T^S$ is divisble by $(x_{k_1} - x_{k_2})$ for any two distinct indices $k_1$, $k_2$ appearing in the same column of $T$. For this, in turn, it is enough to see that $\ev(F_T^S) = 0$, where $\ev$ means setting $x_{k_1}$ and $x_{k_2}$ equal. Let $\alpha \in S_n$ be the transposition that exchanges $k_1$ and $k_2$. Then clearly $\ev(F_T^S) = \ev(\alpha(F_T^S))$. However, by definition,
    \begin{align*}
        \alpha(F_T^S) &= \left( \sum_{\sigma \in R(T)} \sum_{\tau \in C(T)} \sgn(\tau) (\alpha \circ \tau) \circ \sigma \right) (x_T^{i(S)}) \\ &= - \left( \sum_{\sigma \in R(T)} \sum_{\tau \in C(T)} \sgn(\alpha \circ \tau) (\alpha \circ \tau) \circ \sigma \right) (x_T^{i(S)}) \\ &= - \left( \sum_{\sigma \in R(T)} \sum_{\tau \in C(T)} \sgn(\tau) \tau \circ \sigma \right) (x_T^{i(S)}) = - F_T^S,
    \end{align*}
    so $\ev(F_T^S) = \ev(\alpha(F_T^S)) = - \ev(F_T^S)$, hence $\ev(F_T^S) = 0$.
\end{proof}

In fact, the proof shows that there is nothing special here about $x_T^{i(S)}$. Instead, applying the Young symmetrizer $\sum_{\sigma \in R(T)} \sum_{\tau \in C(T)} \sgn(\tau) \tau \circ \sigma$ to \emph{any} polynomial results in a multiple of $\spe_T$ (possibly zero, of course). 
By the theory of higher Specht polynomials \cite{Ariki1997Higher,Terasoma1993Higher}, the lowest degree in which the Specht module $S^\lambda$ occurs in the polynomial ring $P$ is $d(\lambda) \coloneqq \sum_{i=1}^n (i-1)\lambda_i$. The multiplicity of $S^\lambda$ in $P_{d(\lambda)}$ is~$1$, and it is generated by the Specht polynomials of shape~$\lambda$.

Recall next the apolarity pairing

\begin{equation*}
    P \times P \longrightarrow P, \quad \langle f , g \rangle \coloneqq f(\partial_1, \ldots, \partial_n)(g(x_1, \ldots, x_n)).
\end{equation*}

This pairing is $S_n$-equivariant in the sense that $\sigma(\langle f, g \rangle) = \langle \sigma(f), \sigma(g) \rangle$. For every $d$, it restricts to a non-degenerate symmetric bilinear form $P_d \times P_d \longrightarrow \CC$ for which the monomials of degree $d$ form an orthogonal basis. In fact, it is different from the standard scalar product only by a diagonal matrix with positive integers as diagonal entries. In particular, the restriction to $\RR$-coefficients yields an inner product. Let now $\mu_1$ and $\mu_2$ be two distinct partitions of $n$ and denote by $V^{\mu_i}$ the $\mu_i$-isotypic component in $P_d$ ($i = 1,2$). Then $\langle V^{\mu_1}, V^{\mu_2} \rangle = 0$ because the orthogonal complement $\left(V^{\mu_1}\right)^\perp$ is an $S_n$-submodule of $P_d$ which intersects $V^{\mu_1}$ trivially. Considering again the entire apolarity pairing, for any subset $W \subseteq P$, the set
\begin{equation*}
    \{f \in P : \langle f, g \rangle = 0 \text{ for all } g \in W\}
\end{equation*}
is an ideal of $P$, called the \emph{ideal apolar to $W$}, or simply the \emph{apolar ideal}.

Let now $I \subseteq P = \CC[x_1,\ldots,x_n]$ be a homogeneous symmetric ideal such that $P/I \cong M^\lambda$ as $S_n$-modules.
By Lemma~\ref{lem:higherSpecht}, $I$ must intersect $S^\lambda \subseteq P_{d(\lambda)}$ trivially since $P/I \cong M^\lambda$ contains $S^\lambda$ with multiplicity~$1$. It follows that $I_{d(\lambda)}$, the degree~$d(\lambda)$ part of $I$, is contained in the apolar ideal to $S^\lambda \subseteq P_{d(\lambda)}$. We denote this apolar ideal by $I_\lambda$ and call it the \emph{Tanisaki ideal}. We claim $I \subseteq I_\lambda$. This is because otherwise, if there was some homogeneous $f \in I \setminus I_\lambda$, then by definiton of $I_\lambda$ there would be some homogeneous $g \in S^\lambda \subseteq P_{d(\lambda)}$ such that $\langle f, g \rangle \neq 0$. Hence, if $h$ is any monomial appearing in $\langle f, g \rangle \in P$, we get $\langle hf, g \rangle \neq 0$. But $hf \in I_{d(\lambda)} \subseteq (I_\lambda)_{d(\lambda)}$, a contradiction. This shows that the Tanisaki ideal is the unique largest homogeneous symmetric ideal of $P$ intersecting $S^\lambda \subseteq P_{d(\lambda)}$ trivially, as mentioned in \cite[p.~62]{Haiman2003Combinatorics}.

From the inclusion $I \subseteq I_\lambda$ it follows that $\dim_\CC(P/I_\lambda) \leq \dim_\CC(M^\lambda) = \binom{n}{\lambda}$. The final point is now that actually $\dim_\CC(P/I_\lambda) = \binom{n}{\lambda}$ (proved in~\cite{BergeronGarsia1992Harmonic}, recalled in~\cite[Lemma~3.4.17]{Haiman2003Combinatorics}), hence $I = I_\lambda$. This shows that $I_\lambda$ is the only homogeneous ideal in $\mathcal{H}_{M^\lambda}$ which proves at once connectedness of $\mathcal{H}_{M^\lambda}$ and that the Tanisaki ideal is its only torus-fixed point. We can even strengthen this result in two ways.

\begin{proposition}\label{prop:reducedPoint}
    Let $\CC^\ast$ be embedded into the torus $T$ via $s=1$. Then the $\CC^\ast$-fixed point subscheme $\HH_{M^\lambda}^{\CC^\ast}$ is a reduced point.
\end{proposition}

\begin{proof}
    The fixed point subscheme $\HH_{M^\lambda}^{\CC^\ast}$ parametrizes the homogeneous symmetric ideals $I$ with $P/I \cong M^\lambda$ as ungraded $S_n$-representations. The preceeding discussion shows that $\HH_{M^\lambda}^{\CC^\ast}$ set-theoretically consists of a single point, the Tanisaki ideal $I_\lambda$. To see that this point is reduced, it suffices to show that there are no non-trivial (homogeneous and symmetric) infinitesimal deformations of $I_\lambda$. For this, let $A$ be an Artinian local $\CC$-algebra and $I \subseteq P_A \coloneqq P \otimes_\CC A$ be a homogeneous symmetric ideal such that $A \rightarrow P_A/I$ is flat and lifts $P/I_\lambda$. Since $A$ is local and $P_A/I$ is a finitely generated $A$-module, $P_A/I$ is free of rank $\dim M^\lambda$. We also write $I_\lambda \subseteq P_A$ for the extended Tanisaki ideal. If we can show that $I \subseteq I_\lambda$ then the equivariant surjection $P_A/I \twoheadrightarrow P_A/I_{\lambda}$ can be regarded as a surjective endomorphism of a finite rank free $A$-module and is hence an isomorphism, proving $I = I_\lambda$, as desired.

    To conclude, assume there is some homogeneous $f \in I \setminus I_\lambda$. Then $f$ does not annihilate the entire Specht module $S^\lambda \subseteq P_{d(\lambda)}$, so there is $g \in S^\lambda \subseteq P_{d(\lambda)}$ with $\langle f, g \rangle \neq 0$. Let $h$ be an arbitrary monomial appearing in $\langle f, g \rangle$. Then $\langle hf, g \rangle \neq 0$. Replacing $f$ by $hf \in I \setminus I_\lambda$ we are now given $f \in I_{d(\lambda)} \setminus I_\lambda$. The $S_n$-submodule of $I$ generated by all permutations of $f$ must then contain a scalar multiple of the unique $S^\lambda \subseteq P_{d(\lambda)}$, so there is a non-zero $a \in A$ with $a \spe_T \in I$ for some standard Young tableau $T$ of shape $\lambda$. But then, by Lemma~\ref{lem:higherSpecht}, the isotypic component of $S^\lambda$ in $P_A/I$ is $a$-torsion, so if $a$ is not a unit, then $P_A/I$ cannot be a free $A$-module, a contradiction. On the other hand, if $a$ is a unit, again by Lemma~\ref{lem:higherSpecht}, $I$ contains the entire Specht ideal of type $\lambda$, so $P_A/I \otimes_A \CC = P/I_\lambda$ does not contain any copy of $S^\lambda$, contradicting $P/I_\lambda \cong M^\lambda$.
\end{proof}

\begin{remark}\label{rmk:noPositiveTangents}
    Proposition~\ref{prop:reducedPoint} can be slightly improved: The proposition is equivalent to saying that the degree~$0$ part of $T_{[I_\lambda]} \HH_{M^\lambda} \cong \Hom^{S_n}_P(I_\lambda, P/I_\lambda)$ vanishes. We claim that we even have $(T_{[I_\lambda]} \HH_{M^\lambda})_{\geq 0} = 0$, so there are no non-negative tangents. The essential point is again that $S^\lambda$ does not appear in $P_d$ for all degrees $d < d(\lambda)$. Indeed, a tangent vector of degree $k \geq 0$ is given by an $S_n$-equivariant $P$-module homomorphism $\varphi: I_\lambda \rightarrow P/I_\lambda$ of degree $k$, i.e., $\varphi( (I_\lambda)_d ) \subseteq (P/I_\lambda)_{d+k}$ for all $d$. This tangent vector $\varphi$ corresponds to a deformation of $P/I_\lambda$ over the dual numbers $\CC[\epsilon]$ corresponding to the ideal $I_\varphi \subseteq P_{\CC[\epsilon]}$ given by
    \begin{equation*}
        I_\varphi = \{x + \epsilon y | x \in I_\lambda, \ y \in P, \ y = \varphi(x) \text{ in } P/I_\lambda \}.
    \end{equation*}
    In particular, if $y \in P_{d(\lambda)}$, then $x \in (I_\lambda)_{d(\lambda) - k}$. Since $\varphi$ is $S_n$-equivariant and $(I_\lambda)_{\leq d(\lambda)}$ does not contain any copy of $S^\lambda$, we obtain that there is no $x + \epsilon y$ in $I_\varphi$ such that $0 \neq y \in S^\lambda \subseteq P_{d(\lambda)}$. Then a similar proof as that of Proposition~\ref{prop:singleton} below shows that $I_\varphi$ is just the extension of $I_\lambda$ to $P_{\CC[\epsilon]}$, so $\varphi = 0$.
\end{remark}

\begin{proposition}\label{prop:singleton}
    Let $I \subseteq P$ be a not necessarily homogeneous symmetric ideal such that $P/I \cong M^\lambda$ as $S_n$-modules. If $P/I$ is supported only at the origin, i.e., $V(I) = \{0\}$, then $I = I_\lambda$.
\end{proposition}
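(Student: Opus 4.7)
The plan is to identify $I$ via its Macaulay inverse system. Since $V(I) = \{0\}$, the ideal $I$ is $\mathfrak{m}$-primary, where $\mathfrak{m} = (x_1,\ldots,x_n)$ is the graded maximal ideal. The $\mathfrak{m}$-adic filtration on $P/I$ is $S_n$-stable, and passing to the associated graded preserves isotypic multiplicities by telescoping together with Maschke's theorem; hence $P/\gr I \cong P/I \cong M^\lambda$ as $S_n$-modules. The uniqueness result obtained just above then forces $\gr I = I_\lambda$.

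Define the Macaulay inverse system $I^\vee := \{g \in P : f(\partial_1,\ldots,\partial_n)(g)(0) = 0 \text{ for all } f \in I\}$. Since $I$ is $\mathfrak{m}$-primary, $I^\vee$ is finite-dimensional; it is also $S_n$-stable and closed under differentiation. The $S_n$-equivariant scalar pairing $(P/I) \times I^\vee \to \CC$, $(\bar f, g) \mapsto f(\partial)(g)(0)$, is perfect, so $\dim I^\vee = \binom{n}{\lambda}$ and $I^\vee \cong (P/I)^* \cong M^\lambda$ as $S_n$-representations, using self-duality of permutation modules. Analogously one checks that $I_\lambda^\vee$ coincides with the $\partial$-closure $P \cdot S^\lambda$ of the Specht module $S^\lambda \subseteq P_{d(\lambda)}$, and the goal becomes to show $I^\vee = I_\lambda^\vee$.

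Filter $I^\vee$ by total degree via $V_d := I^\vee \cap P_{\leq d}$. Under the perfect pairing, $V_d$ is identified with the annihilator of $\mathfrak{m}^{d+1}(P/I)$, so $V_d \cong \bigl((P/I)/\mathfrak{m}^{d+1}(P/I)\bigr)^*$ as $S_n$-modules, and hence
\[
V_d/V_{d-1} \cong \bigl(\mathfrak{m}^d(P/I)/\mathfrak{m}^{d+1}(P/I)\bigr)^* = (\gr(P/I))_d^* = (P/I_\lambda)_d^* \cong (P/I_\lambda)_d.
\]
Since $(P/I_\lambda)_d$ vanishes for $d > d(\lambda)$ and equals $S^\lambda$ in degree $d = d(\lambda)$, one obtains $V_{d(\lambda)} = I^\vee$ and $V_{d(\lambda)}/V_{d(\lambda)-1} \cong S^\lambda$.

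Splitting the short exact sequence $0 \to V_{d(\lambda)-1} \to I^\vee \to S^\lambda \to 0$ by Maschke produces an $S_n$-subrepresentation $W \cong S^\lambda$ sitting inside $I^\vee \subseteq P_{\leq d(\lambda)}$. By higher Specht theory, $d(\lambda)$ is the minimal degree in which $S^\lambda$ occurs in $P$ and the multiplicity there is one, so the only $S^\lambda$-subrepresentation of $P_{\leq d(\lambda)}$ is the distinguished copy $S^\lambda \subseteq P_{d(\lambda)}$ itself. Therefore $W = S^\lambda$, whence $I^\vee \supseteq S^\lambda$; $\partial$-closure of $I^\vee$ then forces $I^\vee \supseteq P \cdot S^\lambda = I_\lambda^\vee$, and dimension comparison gives equality. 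Via Macaulay duality this translates to $I = I_\lambda$. The crucial delicacy is tracking the $S_n$-equivariance throughout the duality, in order to pin down the unique copy of $S^\lambda$ inside $I^\vee$ to be the distinguished one in $P_{d(\lambda)}$ rather than some copy occurring in higher degrees of $P$.
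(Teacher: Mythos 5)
Your argument is correct, but it takes a genuinely different route from the paper's. You first reduce to the homogeneous case: since $\mm^N \subseteq I$, the $\mm$-adic filtration of $P/I$ splits $S_n$-equivariantly, so the tangent cone of $I$ is a homogeneous symmetric ideal with quotient $\cong M^\lambda$ and hence equals $I_\lambda$ by the uniqueness statement proved just before the proposition; you then transfer this back to $I$ through Macaulay's inverse system, identifying $V_d = I^\vee \cap P_{\leq d}$ with the annihilator of $\mm^{d+1}(P/I)$ under the (indeed $S_n$-invariant) scalar apolarity pairing, so that $I^\vee \subseteq P_{\leq d(\lambda)}$ with top quotient $S^\lambda$; Maschke and the fact that the only copy of $S^\lambda$ in $P_{\leq d(\lambda)}$ is the span of the Specht polynomials then give $S^\lambda \subseteq I^\vee$, whence $I \subseteq \mathrm{Ann}(S^\lambda) = I_\lambda$ and equality by colength. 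All steps check out; the two facts you assert without proof, that $(P/I_\lambda)_d = 0$ for $d > d(\lambda)$ and $(P/I_\lambda)_{d(\lambda)} \cong S^\lambda$, do follow immediately from $I_\lambda$ being the apolar ideal of $S^\lambda \subseteq P_{d(\lambda)}$, but you should say so. The paper instead argues directly on the ideal: from a hypothetical $f \in I \setminus I_\lambda$ it manufactures, inside the $S_n$-module generated by $f$, an element whose degree-$d(\lambda)$ part is a nonzero element of $S^\lambda$, and then uses Lemma~\ref{lem:higherSpecht} (the higher-degree tail is divisible by that Specht polynomial) together with $\mm^N \subseteq I$ to eliminate the tail and conclude that a Specht polynomial lies in $I$, contradicting $P/I \cong M^\lambda$. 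Your route is shorter and more structural, at the cost of invoking Macaulay duality (perfectness of the pairing and $I = \mathrm{Ann}(I^\vee)$), which the paper never needs; the paper's elimination technique is self-contained and, more importantly, is the one that gets extended to products of symmetric groups in the proof of Proposition~\ref{prop:HilbChowInjective}, where the subscheme is supported away from the origin and an origin-centered inverse-system argument would require translated inverse systems. Finally, note that your $\gr$ is the lowest-degree-forms (tangent cone) convention, opposite to the top-degree convention used elsewhere in the paper; within your proof this is consistent and harmless, since the uniqueness result applies to every homogeneous symmetric ideal with quotient $M^\lambda$.
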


\begin{proof}
    It is enough to prove that $I \subseteq I_\lambda$. Assume this is not true and there exists $f \in I \setminus I_\lambda$. Write $f = f_1 + f_2 + \cdots$ as the sum of its homogeneous parts. Then there is a smallest degree $d$ such that there exists some $g \in S^\lambda \subseteq P_{d(\lambda)}$ with $\langle f_d, g \rangle \neq 0$. Let $h$ be an arbitrary monomial appearing in $\langle f_d, g \rangle$, then $\langle hf, g \rangle = \langle hf_d, g \rangle \neq 0$. Replacing $f$ by $hf \in I \setminus I_\lambda$ we are now given a possibly inhomogeneous element $f \in I$ such that all homogeneous components of $f$ except for $f_{d(\lambda)}$ annihilate $S^\lambda \subseteq P_{d(\lambda)}$. Since $S^\lambda$ occurs in $P$ only in degrees $\geq d(\lambda)$, the $S_n$-submodule of $I$ generated by all permutations of $f$ in particular contains a polynomial $p = p_{d(\lambda)} + p_{d(\lambda)+1} + \cdots$ with $0 \neq p_{d(\lambda)} \in S^\lambda \subseteq P_{d(\lambda)}$ such that all permutations of $p$ span an $S_n$-module isomorphic to $S^\lambda$, so we may even assume $p_{d(\lambda)}$ to be a Specht polynomial of shape $\lambda$. By Lemma~\ref{lem:higherSpecht}, then, all $p_d$ for $d > d(\lambda)$ are divisible by $p_{d(\lambda)}$. Therefore, we can eliminate all degree $>d(\lambda)$ terms of $p$ with multiples of $p$ itself, creating only new terms in strictly higher degrees. Since $V(I) = \{0\}$, there exists some $N$ with $\mm^N \subseteq I$. Therefore, eventually, $p_{d(\lambda)} \in I$. But then, again according to Lemma~\ref{lem:higherSpecht}, $I$ contains the entire Specht ideal of type $\lambda$, therefore $P/I$ cannot contain any copy of $S^\lambda$, contradicting $P/I \cong M^\lambda$.
\end{proof}

\begin{corollary}\label{corollary:HilbChowFinite}
    The Hilbert--Chow morphism $\gamma \colon \HH_{M^\lambda} \longrightarrow \CC^n/S_n$ is finite.
\end{corollary}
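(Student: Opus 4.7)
The plan is to combine Brion's Proposition~\ref{prop:BrionHilbertChow}, which says $\gamma$ is projective, with the standard fact that a projective quasi-finite morphism is finite. Thus I only need to verify that every fiber of $\gamma$ is zero-dimensional.

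The fiber over $[0] \in \CC^n/S_n$ is immediate from what is already in place. Any ideal $I \in \gamma^{-1}([0])$ contains $(p_1,\ldots,p_n) \subseteq P$, and the lemma at the end of Subsection~1.1 shows that $V(p_1,\ldots,p_n) = \{0\}$ set-theoretically, so $V(I) \subseteq \{0\}$; Proposition~\ref{prop:singleton} then forces $I = I_\lambda$. The remaining task is to propagate this conclusion to every other fiber, and for this I would use the diagonal $\CC^\ast \subseteq T$ acting by scalar multiplication on $\CC^n$. Since $\gamma$ is $\CC^\ast$-equivariant, and since the induced action on $\CC^n/S_n = \Spec \CC[p_1,\ldots,p_n]$ scales $p_i$ by $\lambda^i$, the unique $\CC^\ast$-fixed point of the base is $[0]$ and every other orbit $\CC^\ast \cdot [q]$ is one-dimensional with closure containing $[0]$.

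Assuming for contradiction some fiber $\gamma^{-1}([q])$ has an irreducible component $C$ of positive dimension (so necessarily $[q] \neq [0]$ by the previous paragraph), I would set $Y \coloneqq \overline{\CC^\ast \cdot C} \subseteq \HH_{M^\lambda}$. Because the stabilizer of $[q]$ in $\CC^\ast$ is a finite group of roots of unity, the action map $\CC^\ast \times C \to \CC^\ast \cdot C$ has finite fibers, and therefore $Y$ is irreducible of dimension $1+\dim C \geq 2$. The restriction $\gamma|_Y \colon Y \to \overline{\CC^\ast \cdot [q]}$ is proper and surjects onto a one-dimensional irreducible curve; the fiber-dimension theorem for a proper morphism of irreducible varieties then forces \emph{every} non-empty fiber of $\gamma|_Y$ to have dimension at least $\dim Y - 1 \geq 1$. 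But the fiber of $\gamma|_Y$ over $[0]$ is contained in $\gamma^{-1}([0]) = \{I_\lambda\}$, a single point, yielding the desired contradiction.

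The main obstacle is really one of packaging rather than of any one deep step: one must check that the $\CC^\ast$-stabilizers on the base are finite, that $Y$ inherits irreducibility and the expected dimension from $C$, and that properness plus surjectivity onto a curve really does supply positive-dimensional fibers everywhere. All three are standard, so the essential input is the conjunction of Proposition~\ref{prop:singleton}, which pins down the fiber over $[0]$, with equivariance of $\gamma$ under the torus action, which transports this information to every other fiber.
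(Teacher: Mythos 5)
Your proof is correct and follows essentially the same route as the paper: projectivity from Brion, the singleton fiber over the origin via Proposition~\ref{prop:singleton}, and torus equivariance to propagate quasi-finiteness from the fixed point to everywhere. The only difference is cosmetic: where the paper invokes upper semicontinuity of fiber dimension directly (combined with the fact that $I_\lambda$ is the unique $T$-fixed point), you unpack that step by taking the $\CC^\ast$-orbit closure of a hypothetical positive-dimensional fiber component and applying the lower bound on fiber dimensions for a dominant morphism of irreducible varieties.
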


\begin{proof}
    The Hilbert--Chow morphism is projective by Proposition~\ref{prop:BrionHilbertChow}. It is also equivariant with respect to the torus action. Since the Tanisaki ideal $I_\lambda$ is in the closure of the torus-orbit of every point, the upper semicontinuity of fiber dimensions reduces the finiteness of $\gamma$ to the finiteness of the fiber over the origin in $\CC^n/S_n$. Set-theoretically, this fiber is a singleton by Proposition~\ref{prop:singleton}.
\end{proof}

\begin{proposition}\label{prop:HilbChowInjective}
    Let $q \in \overline{O(\lambda)}$. Then the fiber of the Hilbert--Chow morphism $\gamma \colon \HH_{M^\lambda} \longrightarrow \CC^n/S_n$ over the point corresponding to the orbit ${S_n \cdot q}$ is a singleton.
\end{proposition}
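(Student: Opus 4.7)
The plan is to reduce the uniqueness problem to an $H$-equivariant analog of Proposition~\ref{prop:singleton}, where $H \coloneqq \Stab(q) = S_\mu$ and $\mu$ is the orbit-type of $q$. The hypothesis $q \in \overline{O(\lambda)}$ ensures that $\mu$ is a coarsening of $\lambda$: the parts of $\lambda$ partition into blocks of sums $\mu_1, \ldots, \mu_{m'}$, yielding sub-partitions $\lambda^{(i)} \vdash \mu_i$, and $S_\lambda$ is accordingly a Young subgroup of $H \cong \prod_i S_{\mu_i}$.

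Let $I \in \gamma^{-1}(\pi(q))$. Since $\mathfrak{M}P \subseteq I$, where $\mathfrak{M}$ is the maximal ideal of $P^{S_n}$ corresponding to $\pi(q)$, one has $V(I) \subseteq S_n \cdot q$. The Chinese Remainder Theorem gives $P/\mathfrak{M}P \cong \bigoplus_{q' \in S_n \cdot q} B_{q'}$ as $S_n$-algebras, and by $S_n$-equivariance $I$ is uniquely determined by its image in a single chosen local factor $B_{q_1}$. In the local coordinates $y_i \coloneqq x_i - (q_1)_i$, on which $H$ acts by permutation within the $m'$ blocks of variables, the factor $B_{q_1}$ is $H$-equivariantly isomorphic to the $H$-coinvariant algebra, which itself factors as a tensor product $\bigotimes_i R_{\mu_i}$ where $R_k$ denotes the classical coinvariant algebra of $S_k$; this is a standard fact for the reflection subgroup $H \subseteq S_n$ acting on the formal neighborhood of $q_1$. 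Thus $I$ corresponds to an $H$-stable ideal $J \subseteq \CC[y_1, \ldots, y_n]$ with $V(J) = \{0\}$ and $\dim_\CC \CC[y]/J = |H|/|S_\lambda|$.

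The key claim is that $J$ equals the product Tanisaki ideal $I_{H,\lambda} \coloneqq \sum_i I_{\lambda^{(i)}} \cdot \CC[y]$, where $I_{\lambda^{(i)}} \subseteq \CC[y^{(i)}]$ is the usual Tanisaki ideal of $\lambda^{(i)}$ in the variables of the $i$-th $H$-block. One checks directly that $\CC[y]/I_{H,\lambda} \cong M_H^\lambda$ as $H$-modules, where $M_H^\lambda \coloneqq \mathrm{Ind}_{S_\lambda}^H \mathbf{1} \cong \bigotimes_i M^{\lambda^{(i)}}$. To prove the claim, the argument of Proposition~\ref{prop:singleton} transfers to the $H$-equivariant setting in two steps. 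First, $(P/I)_{q_1} \cong M_H^\lambda$ as $H$-modules: Mackey's formula applied to $\mathrm{Res}_H M^\lambda$ has $M_H^\lambda$ as the contribution of the identity double coset of $H \backslash S_n / S_\lambda$, and combining this with Frobenius reciprocity and the ring-theoretic constraint that $(P/I)_{q_1}$ is an $H$-algebra quotient of the $H$-coinvariant algebra $B_{q_1}$ rules out the other Mackey summands as candidates for $(P/I)_{q_1}$. Second, the product structure of $H$ yields an immediate tensor-product analog of Lemma~\ref{lem:higherSpecht}: the irreducible $H$-module $S_H^\lambda \coloneqq \bigotimes_i S^{\lambda^{(i)}}$ appears in $\CC[y]$ in lowest total degree $\sum_i d(\lambda^{(i)})$ with multiplicity one, and every higher-degree copy is divisible by the corresponding product of Specht polynomials. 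Apolarity combined with $V(J) = \{0\}$ then forces $J \subseteq I_{H,\lambda}$, and the dimension count gives equality.

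The main technical obstacle is the identification $(P/I)_{q_1} \cong M_H^\lambda$: induction from $H$ to $S_n$ is not injective on isomorphism classes of $H$-modules, so several abstract $H$-modules could a priori induce to $M^\lambda$, and the ring-theoretic structure of $B_{q_1}$ as the $H$-coinvariant algebra must genuinely be used to eliminate alternative candidates. Existence in $\gamma^{-1}(\pi(q))$ follows by constructing $I$ explicitly from $I_{H,\lambda}$ via $S_n$-symmetrization over the orbit $S_n \cdot q$, confirming that the fiber is a singleton.
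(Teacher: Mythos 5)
Your proposal reverses the logical order of the paper's argument in a way that introduces a real gap. The paper never tries to identify the local $H$-module $(P/I)_{q_1}$ before doing apolarity. Instead, it (a) shows directly that the Specht span $V$ (spanned by the block-wise Specht polynomials for the $\mu^i$) injects into $P/I_q$, using the injectivity of $S^\lambda \hookrightarrow P/I$ and the fact that those block Specht polynomials divide Specht polynomials of shape $\lambda$, (b) runs the higher-Specht/apolarity argument of Proposition~\ref{prop:singleton} to get $I_q \subseteq I'_q$, and (c) concludes equality by a dimension count; the isomorphism $(P/I)_{q_1} \cong M^{\mu^1}\otimes\cdots\otimes M^{\mu^k}$ is a \emph{consequence}, not a prerequisite. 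You instead try to establish that isomorphism first and then feed it into an $H$-equivariant version of Proposition~\ref{prop:singleton}. That is harder, and you acknowledge it is the "main technical obstacle," but the proposed route does not close the gap.

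Concretely, the Mackey/Frobenius step cannot work as stated, for two reasons. First, what you actually know is $\mathrm{Ind}_H^{S_n}\bigl((P/I)_{q_1}\bigr) \cong M^\lambda$; Mackey's formula describes $\mathrm{Res}_H M^\lambda$, and induction is far from injective on isomorphism classes of $H$-modules, so this constraint does not determine $(P/I)_{q_1}$. Your appeal to "the ring-theoretic constraint" is exactly where the content would have to be, and it is asserted, not argued. Second, and more fundamentally, the target $M_H^\lambda = \bigotimes_i M^{\lambda^{(i)}}$ is not canonically defined: the sub-partitions $\lambda^{(i)} \vdash \mu_i$ depend on a choice of the surjection sending parts of $\lambda$ to parts of $\mu$, and different choices give non-isomorphic $H$-modules, all inducing to $M^\lambda$. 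For $\lambda = (2,1,1)$, $n=4$, $q = (0,0,1,1)$ (so $\mu = (2,2)$, $H = S_2 \times S_2$), the two choices $\bigl(\lambda^{(1)},\lambda^{(2)}\bigr) = \bigl((2),(1,1)\bigr)$ and $\bigl((1,1),(2)\bigr)$ give $\mathbf{1}\boxtimes M^{(1,1)}$ and $M^{(1,1)}\boxtimes\mathbf{1}$, and the corresponding product apolar ideals $I_{H,\lambda}$ are different. Symmetrizing either over the orbit --- exactly the construction you invoke in your last paragraph to produce a point of the fiber --- yields a symmetric ideal with quotient $M^{(2,1,1)}$ supported on $S_4\cdot q$, so the existence step of your proposal produces \emph{two distinct} ideals in $\gamma^{-1}(\pi(q))$, directly contradicting the singleton claim you are trying to prove. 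Any correct argument must explain which coarsening is forced by the unknown ideal $I$ and why the answer does not depend on $I$; neither the Mackey route nor the appeal to the $H$-coinvariant algebra structure of $B_{q_1}$ addresses this. (Note that the paper's own proof also fixes a surjection $l : [m] \to [k]$ silently; the injectivity claim "$V \hookrightarrow P/I_q$" there is sensitive to the same choice, so this is a point worth raising with the authors as well.)
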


For the proof, we need a few facts from the theory of higher Specht polynomials in the case of \emph{products} of symmetric groups $G \coloneqq S_{l_1} \times \cdots \times S_{l_k}$, $\sum_{i=1}^k l_i = n$, acting on the polynomial ring $P$ via $G \subseteq S_n$, see~\cite{Ariki1997Higher}. Fundamentally, the irreducible representations of $G$ are exactly the tensor products of Specht modules \mbox{$S^{\nu^1} \otimes \cdots \otimes S^{\nu^k}$}, $\nu^i \vdash l_i$. This tensor product can be realized concretely as a $G$-submodule of $P_{d(\nu^1) + \ldots + d(\nu^k)}$ spanned by the products of the Specht polynomials for the $\nu^i$. In fact, the smallest degree $d$ such that $P_d$ contains a copy of \mbox{$S^{\nu^1} \otimes \cdots \otimes S^{\nu^k}$} is precisely $d(\nu^1) + \ldots + d(\nu^k)$, and in this degree the multiplicity of \mbox{$S^{\nu^1} \otimes \cdots \otimes S^{\nu^k}$} is~$1$. Denote by $\mathbf{T} \coloneqq (T_1, \ldots, T_k)$ a $k$-tuple of tableaux with $T_i$ of shape $\nu^i$, filled in such a way that the integers $l_1 + \ldots + l_{i-1} + 1, \ldots, l_1 + \ldots + l_i$ all appear exactly once in $T_i$. We write $F_{\mathbf{T}} \coloneqq \prod_{i=1}^k F_{T_i}$. Lemma~\ref{lem:higherSpecht} generalizes to this setting, i.e., every higher Specht polynomial $F_{\mathbf{T}}^{\mathbf{S}}$ (see \cite{Ariki1997Higher} for the notation) is divisible by $F_{\mathbf{T}}$. This can be proven similarly as Lemma~\ref{lem:higherSpecht}.

\begin{proof}[Proof of Proposition~\ref{prop:HilbChowInjective}]
    We extend the proof of Proposition~\ref{prop:singleton}. Let $q \in \overline{O(\lambda)} \subseteq \CC^n$. Up to a permutation, the first $l_1$ entries of $q$ are equal to $a_1 \in \CC$, the following $l_2$ entries are equal to $a_2 \in \CC$, and so on, where $a_1, \ldots, a_k$ are all distinct. Each multiplicity $l_i$ is a sum of some parts of $\lambda$ in such a way that every part of $\lambda$ shows up for exactly one of the $l_i$. In other words, there is a surjective map $l:[m] \rightarrow [k]$ such that $l_i = \sum_{j \in l^{-1}(i)} \lambda_j$. Denote by $\mu^i \vdash l_i$ the partition given by those parts of $\lambda$ indexed by $l^{-1}(i)$. We now denote by $q = q_1, q_2, \ldots, q_{s}$ the distinct points of the $S_n$-orbit of $q$. For every ideal $I \subseteq P$ in the fiber of the Hilbert--Chow morphism over this orbit, the Chinese remainder theorem gives a canonical $S_n$-equivariant isomorphism
    \begin{equation*}
        P/I \cong P/I_{q_1} \oplus \cdots \oplus P/I_{q_{s}}, \quad f \mapsto (f, \ldots, f),
    \end{equation*}
    where the $S_n$-module structure on the right side is defined similarly as in the proof of Proposition~\ref{lemma:radicalIdeals}. For the uniqueness of $I$ it will be enough to prove the uniqueness of the $I_{q_i}$ because $I = \bigcap_{i=1}^{s} I_{q_i}$. Without loss of generality we only show the uniqueness of $I_{q}$.
    
    Let $G = S_{l_1} \times \cdots \times S_{l_k}$ denote the stabilizer of $q$, so that $P/I_q$ is a $G$-module. We define $I' \subseteq P$ to be the apolar ideal to the $G$-module
    \begin{equation*}
        V \coloneqq \left( S^{\mu^1} \otimes 1 \otimes \cdots \otimes 1 \right) \oplus \cdots \oplus \left( 1 \otimes 1 \otimes \cdots \otimes S^{\mu^k} \right) \subseteq P,
    \end{equation*}
    all direct summands embedded in the smallest possible degrees $d(\mu^1), \ldots, d(\mu^k)$, i.e., $V$ is spanned by all Specht polynomials for the partitions $\mu^1, \ldots, \mu^k$ in the respective sets of variables.
    (If some of the $\mu^i = (l_i)$ are trivial, we do not include the corresponding direct summands into $V$.)
    For any $f \in P$ we can write $f = f^1 + \ldots + f^k + g$, where $f^i$ is the sum of all terms of $f$ which only involve the $i$-th set of variables, i.e., those indexed by $l_1 + \ldots + l_{i-1} + 1, \ldots, l_1 + \ldots + l_i$, and $g$ is the sum of all remaining terms. Certainly, differentiating a polynomial with respect to a variable it does not contain gives zero. Therefore, since the $i$-th direct summand of $V$ only involves the $i$-th set of variables, it is clear that $\langle f, V \rangle = 0$ if and only if $\langle f^i, S^{\mu^i} \rangle = 0$ for all $i = 1, \ldots, k$, where $S^{\mu^i}$ is understood again as the span of all Specht polynomials of type $\mu^i$ in the $i$-th set of variables.
    This proves
    \begin{equation*}
        P/I' \cong \left( \CC[x_1,\ldots,x_{l_1}]/I'_1 \right) \otimes \cdots \otimes \left( \CC[x_{n-l_k+1},\ldots, x_n]/I'_k \right),
    \end{equation*}
    where $I'_1$ is the apolar ideal to $S^{\mu^1} \subseteq \CC[x_1,\ldots,x_{l_1}]_{d(\mu^1)}$ etc., i.e., the $I'_i = I_{\mu^i}$ are the Tanisaki ideals corresponding to the partition $\mu^i \vdash l_i$. In particular, $P/I' \cong M^{\mu^1} \otimes \cdots \otimes M^{\mu^k}$ as $G$-modules.
    
    Let now $I'_q$ be the translate of $I'$ whose only support point is $q$; this translation is $G$-equivariant. We claim $I_q \subseteq I'_q$. For this, we first observe that the canonical map $V \rightarrow P/I_q$ is injective, i.e., no direct summand of $V$ is contained in $I_q$. The reason is that the restriction of the Chinese remainder isomorphism above to $S^\lambda \subseteq P_{d(\lambda)}$ is injective. Now, $S^\lambda \subseteq P_{d(\lambda)}$ is generated by the Specht polynomials of shape $\lambda$, each of which is a multiple of the product of the corresponding Specht polynomials of the $\mu^i$. In particular, any Specht polynomial for $\mu^i$ is non-zero in $P/I_{q_i}$, hence indeed $V$ injects into $P/I_q$.
    To see that $I_q \subseteq I'_q$, we can first translate both ideals $G$-equivariantly into the origin. Note that $V$ still injects into $P/I_0$ because the Specht polynomials generating $V$ are not affected by this translation. Now assume that there exists $f \in I_0 \setminus I'$. Writing $f = f^1 + \ldots + f^k + g$ as above, we can assume without loss of generality that $\langle f^1, S^{\mu^1} \rangle \neq 0$. Multiplying $f$ by some monomial $h$ appearing in $\langle f^1, S^{\mu^1} \rangle$, we may replace $f$ by $hf$, so that we can assume that the degree $d(\mu^1)$ part of $f^1$ does not annihilate $S^{\mu^1}$. Hence the $G$-submodule of $I_0$ generated by $f$ contains an element $f'$ which spans a $G$-submodule isomorphic to $S^{\mu^1} \otimes 1 \otimes \cdots \otimes 1$ such that the degree $d(\mu^1)$ part of $f'$ is a Specht polynomial in $S^{\mu^1}$. Moreover, $f'$ does not have any non-zero homogeneous components of smaller degrees because $P_{<d(\mu^1)}$ does not contain any copy of $S^{\mu^1} \otimes 1 \otimes \cdots \otimes 1$. Therefore, we may assume $f' = a + b$ with $a \in S^{\mu^1} \subseteq P_{d(\mu^1)}$ a Specht polynomial and $b \in P_{>d(\mu^1)}$. As $f'$ spans a $G$-module isomorphic to $S^{\mu^1} \otimes 1 \otimes \cdots \otimes 1$, the polynomial $b$ lies in the corresponding isotypic component of $P$ and is divisible by $a$ by Lemma~\ref{lem:higherSpecht}. Thus inductively, since $\mm^N \subseteq I_0$ for some $N$, we obtain $a \in I_0$. Therefore, the canonical map $V \rightarrow P/I_0$ has a non-trivial kernel, contradicting what we said above. So indeed, $I_q \subseteq I'_q$. This implies
    \begin{align*}
        \binom{l_1}{\mu^1} \cdots \binom{l_k}{\mu^k} = \dim_\CC(P/I'_q) \leq \dim_\CC(P/I_q) &= \frac{\dim_\CC(P/I)}{r} \\ &= \frac{\binom{n}{\lambda}}{|S_n|/|G|} = \frac{\binom{n}{\lambda}}{\binom{n}{l_1,\ldots,l_k}} = \binom{l_1}{\mu^1} \cdots \binom{l_k}{\mu^k},
    \end{align*}
    so for dimension reasons actually $I_q = I'_q$. This proves the uniqueness of~$I_q$.
\end{proof}

\begin{proof}[Proof of Theorem~\ref{thm:singleOrbit}]
    Connectedness follows from the fact that the Tanisaki ideal $I_\lambda$ is the only torus-fixed point. Since the smoothable component has dimension $m$, it suffices to show that every other potential irreducible component has dimension $\leq m$. Since the Hilbert--Chow morphism is finite by Corollary~\ref{corollary:HilbChowFinite}, the dimension of every irreducible component agrees with the dimension of its image. So it remains to prove that all partitions $\mu$ for which the preimage of $O(\mu)/S_n$ under the Hilbert--Chow morphism is non-empty have at most $m$ parts. For this, let $I \subseteq P$ be a symmetric ideal with $P/I \cong M^\lambda$ as $S_n$-modules and such that the $S_n$-orbit given by $V(I)$ lies in $O(\mu)/S_n$. Then $P/\sqrt{I} \cong M^\mu$ and the canonical surjection $P/I \twoheadrightarrow P/\sqrt{I}$ is $S_n$-equivariant. Therefore, $\mu \trianglerighteq \lambda$. This in particular implies that the number of non-zero parts of $\mu$ is at most $m$, as desired.
    The remaining statement is precisely Proposition~\ref{prop:HilbChowInjective}.
\end{proof}

Remarkably, it is possible to give explicit generators for the Tanisaki ideal $I_\lambda$, as follows.

\begin{theorem}[\cite{Tanisaki1982Defining,GarsiaProcesi1992}]
    Let $\lambda \vdash n$ and let $\lambda'$ be its transpose. For $S \subseteq [n]$ we write $e_r(x_S)$ for the elementary symmetric polynomial of degree $r$ in the variables indexed by $S$. Then the Tanisaki ideal $I_\lambda$ is generated by the polynomials $e_r(x_S)$ with $|S| \geq r \geq r_\lambda(|S|) \coloneqq |S| - n + 1 + \sum_{i=1}^{n-|S|} \lambda'_i$.
\end{theorem}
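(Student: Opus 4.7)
The plan is to denote by $J_\lambda \subseteq P$ the ideal generated by the stated polynomials and prove $J_\lambda = I_\lambda$. Since the paper has already established $\dim_\CC P/I_\lambda = \binom{n}{\lambda}$ in the discussion preceding Proposition~\ref{prop:singleton}, it is enough to show (a) the inclusion $J_\lambda \subseteq I_\lambda$ and (b) the dimension bound $\dim_\CC P/J_\lambda \leq \binom{n}{\lambda}$; together these force equality.

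For (a), I would fix a point $p \in O(\lambda)$ whose distinct coordinates $a_1,\ldots,a_m$ appear with multiplicities $\lambda_1,\ldots,\lambda_m$, and let $I_p \subseteq P$ be the vanishing ideal of the orbit $S_n \cdot p$. By the $S_n$-equivariant Chinese-remainder isomorphism employed in Lemma~\ref{lemma:radicalIdeals}, $P/I_p \cong M^\lambda$ as $S_n$-modules; hence $P/\gr I_p$ is zero-dimensional, supported at the origin, and isomorphic to $M^\lambda$, so Proposition~\ref{prop:singleton} yields $\gr I_p = I_\lambda$. It therefore suffices to lift each generator $e_r(x_S)$ to some $f_{r,S}\in I_p$ whose top-degree form equals $e_r(x_S)$. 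The starting point is the identity
\[ \prod_{i=1}^n(t - x_i) \;\equiv\; \prod_{j=1}^m(t - a_j)^{\lambda_j} \pmod{I_p[t]}, \]
combined with the observation that $\prod_{i\in S}(t-x_i)$ divides the right-hand side in $(P/I_p)[t]$, because on any orbit point its roots form a sub-multiset of $\{a_j^{\lambda_j}\}$. Expanding in $t$ and comparing coefficients yields explicit polynomial identities modulo $I_p$ between the $e_l(x_S)$ and strictly lower-degree terms. The threshold $r \geq r_\lambda(|S|)$ corresponds precisely to the regime in which the constraints $0\leq \mu_j \leq \lambda_j$ and $\sum_j \mu_j = |S|$ on the admissible multiplicity vectors $(\mu_j)$ of the $a_j$ within $S$ force $e_r(x_S)$ to be determined modulo lower-degree terms; working this out gives the required $f_{r,S}$.

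For (b), the plan is to induct on $n$ following a Garsia-Procesi branching strategy. Using the identity $e_r(x_S) = e_r(x_{S\setminus\{n\}}) + x_n\,e_{r-1}(x_{S\setminus\{n\}})$ whenever $n \in S$, one compares $J_\lambda$ with $J_\lambda + (x_n)$, and a refined filtration whose successive subquotients are indexed by the corners of $\lambda$ identifies those subquotients with Tanisaki-type quotients $P'/J_{\lambda^\circ}$ for partitions $\lambda^\circ\vdash n-1$ obtained by removing a corner cell, where $P' = \CC[x_1,\ldots,x_{n-1}]$. The Pascal-type identity $\binom{n}{\lambda} = \sum_{\lambda^\circ}\binom{n-1}{\lambda^\circ}$ (summing over corners) then propagates the bound from $n-1$ to $n$. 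The principal obstacle is this recursion: since the generators of $J_\lambda$ do not form a Gr\"obner basis with respect to any natural monomial order, verifying that each subquotient is \emph{isomorphic to} (rather than merely surjected onto) the corresponding smaller Tanisaki quotient requires constructing explicit monomial descent bases, which is the combinatorial core of Garsia-Procesi's argument.
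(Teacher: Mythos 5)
First, a point of comparison: the paper does not prove this statement at all --- it is quoted from \cite{Tanisaki1982Defining,GarsiaProcesi1992} --- so your proposal has to be measured against the classical Tanisaki--Garsia--Procesi proof rather than against an argument in the text. Your skeleton is exactly that classical strategy, and its logic is sound: granting $\dim_\CC P/I_\lambda = \binom{n}{\lambda}$ (which the paper records before Proposition~\ref{prop:singleton}), the inclusion $J_\lambda \subseteq I_\lambda$ gives a surjection $P/J_\lambda \twoheadrightarrow P/I_\lambda$, so the bound $\dim_\CC P/J_\lambda \leq \binom{n}{\lambda}$ forces $J_\lambda = I_\lambda$. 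Your identification $I_\lambda = \gr I_p$ via Proposition~\ref{prop:singleton} is also legitimate: the degree filtration is $S_n$-stable, so $P/\gr I_p \cong P/I_p \cong M^\lambda$ by semisimplicity (Lemma~\ref{lemma:radicalIdeals}), and $V(\gr I_p) = \{0\}$ since $\gr I_p$ is homogeneous of finite colength.

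However, both halves are left open precisely where the real work happens. In (a), the passage from ``$\prod_{i\in S}(t-x_i)$ divides $\prod_j (t-a_j)^{\lambda_j}$ in $(P/I_p)[t]$'' to the existence of $f_{r,S}\in I_p$ with top-degree form $e_r(x_S)$, valid exactly in the range $r\ge r_\lambda(|S|)$, is asserted (``working this out gives the required $f_{r,S}$'') but not carried out, and it is not routine: dividing $\prod_j(t-a_j)^{\lambda_j}$ by the monic polynomial $\prod_{i\in S}(t-x_i)$ and comparing coefficients produces remainder coefficients lying in $I_p$ whose top-degree forms are not visibly the $e_r(x_S)$, and pinning down the precise threshold $r_\lambda(|S|)$ is the content of Tanisaki's lemma (typically an induction on $|S|$ using $e_r(x_{S\setminus i}) = e_r(x_S) - x_i e_{r-1}(x_{S\setminus i})$, or an explicit construction as in Garsia--Procesi); compare the paper's own pigeonhole argument for the weaker inclusion $\Tilde{I}_\mu \subseteq I_\mu$ in the ``Two ideal inclusions'' subsection, which shows the flavor of what is needed but only achieves \emph{monomial} top forms. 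In (b) you flag the gap yourself: identifying the subquotients of the proposed filtration of $P/J_\lambda$ with the smaller Tanisaki quotients (equivalently, the spanning/straightening argument for the Garsia--Procesi monomial basis and the matching of the recursion with $\binom{n}{\lambda} = \sum_{\lambda^\circ}\binom{n-1}{\lambda^\circ}$) is the combinatorial heart of \cite{GarsiaProcesi1992}, and no argument is offered beyond naming it. As it stands the proposal is an accurate roadmap of the known proof with its two decisive steps deferred, so it cannot be accepted as a proof; if the aim is merely to use the result, citing it as the paper does is the appropriate course.
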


A short computation shows that $|S| \geq r_\lambda(|S|)$ is equivalent to $|S| \geq n - \lambda_1 + 1$. Under this assumption, if $S \subseteq S'$, then $r_\lambda(|S|) \geq r_\lambda(|S'|)$. This together with the relation $e_r(x_{S \setminus i}) = e_r(x_S) - x_i e_{r-1}(x_{S \setminus i})$ for $i \in S$ shows that $I_\lambda$ is in fact generated already by the full elementary symmetric polynomials $e_r(x_{[n]})$ for $1 \leq r \leq n$ and by the $e_{r_\lambda(|S|)}(x_S)$. In fact, among the full elementary symmetric polynomials, it is enough to require $e_r(x_{[n]})$ to lie in the ideal for $1 \leq r \leq m$, where $m$ is the number of non-zero parts of $\lambda$.

\begin{proposition}\label{prop:m=2}
    Let $\lambda = (\lambda_1,\lambda_2) \vdash n$ be a partition with only $2$ non-zero parts. Then $\HH_{M^\lambda} \cong \CC^2$.
\end{proposition}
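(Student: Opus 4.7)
The plan is to upgrade the bijective finite morphism $\CC^2 \cong (Z_\lambda)^\nu \to Z_\lambda \subseteq \HH_{M^\lambda}$ (from the discussion following Theorem~\ref{thm:singleOrbit}) to an isomorphism by proving that $\HH_{M^\lambda}$ is smooth everywhere. Indeed, smoothness together with connectedness (Theorem~\ref{thm:singleOrbit}) forces irreducibility, so $\HH_{M^\lambda} = Z_\lambda$; since $\HH_{M^\lambda}$ is then normal, Zariski's Main Theorem upgrades the finite birational bijection $\CC^2 \to \HH_{M^\lambda}$ to an isomorphism.

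The singular locus is closed and $T$-invariant, and $[I_\lambda]$ is the unique $T$-fixed point (so it lies in every $T$-orbit closure). Hence smoothness of $\HH_{M^\lambda}$ reduces to smoothness at $[I_\lambda]$. By Proposition~3.5 of~\cite{BrionInvariantHilb}, I want to show
\[
    \dim_\CC \Hom_P^{S_n}(I_\lambda,\, P/I_\lambda) \;\leq\; 2,
\]
the lower bound $\geq 2$ being automatic from $\dim Z_\lambda = 2$. For this I would use Tanisaki's presentation: after eliminating the redundant generators via the identity $e_r(x_S) = e_r(x_{S\cup\{i\}}) - x_i\, e_{r-1}(x_S)$, $I_\lambda$ is minimally generated by the two invariants $e_1, e_2 \in P^{S_n}$ together with the $S_n$-orbit of $e_{\lambda_2+1}(x_S)$ over $(\lambda_2+1)$-subsets $S \subseteq [n]$.

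Since $P/I_\lambda \cong M^\lambda$ contains the trivial representation $S^{(n)}$ only once, its $S_n$-invariants form a one-dimensional subspace spanned by $\overline 1$. Any $\phi \in \Hom_P^{S_n}(I_\lambda, P/I_\lambda)$ therefore sends $e_1, e_2$ to scalars, contributing two parameters $\phi(e_1), \phi(e_2) \in \CC$. The main task---and the principal obstacle---is to show that $\phi$ is then forced on the orbit of $e_{\lambda_2+1}(x_S)$. I would decompose $\Hom_P^{S_n}$ using the $T$-grading (inherited from $I_\lambda$ being $T$-fixed) together with the isotypic decomposition $M^\lambda = \bigoplus_{\mu \trianglerighteq \lambda,\,\len(\mu)\leq 2} S^\mu$, tracking graded multiplicities via the Kostka--Foulkes polynomials for two-row shapes. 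The $P$-linearity syzygies---obtained for instance by multiplying $e_{\lambda_2+1}(x_S)$ by $x_j$ for $j \notin S$ to re-express it using larger-subset generators and the invariants $e_1, e_2$---then cut the possible values of $\phi(e_{\lambda_2+1}(x_S))$ down to expressions uniquely determined by $\phi(e_1)$ and $\phi(e_2)$. A cleaner alternative is to write down two explicit first-order deformations of $I_\lambda$ coming from the smoothable family $\CC^2 \to Z_\lambda$ and prove, via Schur-type vanishing of $\Hom$ between non-isomorphic isotypic summands, that they already surject onto $T_{[I_\lambda]} \HH_{M^\lambda}$.
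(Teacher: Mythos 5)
Your high-level strategy matches the paper's: reduce smoothness of $\HH_{M^\lambda}$ to smoothness at the unique $T$-fixed point $[I_\lambda]$, then show $\dim_\CC \Hom_P^{S_n}(I_\lambda,P/I_\lambda)\leq 2$, and finally deduce $\HH_{M^\lambda}\cong\CC^2$ from irreducibility plus the explicit description of the normalization of the smoothable component. However, there is a genuine gap in the execution, concentrated in your description of a minimal generating set for $I_\lambda$ and the resulting count of free parameters of a tangent vector.

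Your claimed minimal generating set $(e_1,e_2,\;S_n\cdot e_{\lambda_2+1}(x_S))$ with $|S|=\lambda_2+1$ is incorrect once $\lambda_2\geq 2$. The Tanisaki presentation forces $e_2(x_{[n-1]})\in I_\lambda$, and the identity
\begin{equation*}
    e_2(x_{[n-1]}) = e_2 - x_n e_1 + x_n^2
\end{equation*}
then gives $x_n^2\in I_\lambda$. But $x_n^2$ does not lie in the ideal you wrote down: its degree~$2$ part is only $\langle e_2,\,e_1x_1,\ldots,e_1x_n\rangle$, which has dimension $n+1$ and visibly does not contain $x_n^2$ (compare coefficients of the monomials $x_ix_j$, $i\neq j$). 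The elimination identity $e_r(x_{S\setminus i})=e_r(x_S)-x_i e_{r-1}(x_{S\setminus i})$ does \emph{not} let you drop the $e_2(x_S)$, $|S|=n-1$; it merely lets you trade them for the generators $x_i^2$, which the paper keeps. In fact, for $\lambda_2\geq 2$ the paper's normal form is $I_\lambda=(p_1,x_i^2,\,S_n\cdot(x_1\cdots x_{\lambda_2+1}))$.

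This matters concretely for the tangent space count. With your generating set one would conclude that $\phi$ is determined by $\phi(e_1),\phi(e_2)\in\CC$ plus whatever $\phi$ does on the orbit of $e_{\lambda_2+1}(x_S)$. The latter span is the permutation module $M^{(n-\lambda_2-1,\lambda_2+1)}\cong\bigoplus_{k=0}^{\lambda_2+1}S^{(n-k,k)}$, which shares $\lambda_2+1$ irreducible summands with $P/I_\lambda\cong M^\lambda$, so $\Hom_{S_n}$ between them is $(\lambda_2+1)$-dimensional. Cutting all of these parameters down to zero via $P$-linearity syzygies in $e_1,e_2$ only is not clearly feasible and is not what happens. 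The paper circumvents exactly this combinatorial blow-up: with the generator $x_i^2$ included, the key observation is that $I_\lambda$ is generated by $(p_1,x_i^2)$ together with the \emph{Specht polynomials} of shape $\mu=(\lambda_1-1,\lambda_2+1)$, and $M^\lambda$ contains no copy of $S^\mu$, so those generators are forced to $0$ under any $\phi$. That leaves precisely three scalars $\alpha=\phi(p_1)$, $\beta$ from the trivial part $p_2$ of $\langle x_i^2\rangle$, and $\gamma$ from the standard part $x_i^2-x_j^2\mapsto\gamma(x_i-x_j)$; one more explicit relation (worked out separately for $\lambda_1>\lambda_2$ and $\lambda_1=\lambda_2$) then brings the count down to $2$. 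You would need to supply both the corrected generating set (including $x_i^2$), the identification of the degree-$(\lambda_2+1)$ generators with Specht polynomials of shape $\mu$, and the final syzygy forcing a linear relation among $\alpha,\beta,\gamma$, before the tangent space bound is established.
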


\begin{proof}
It suffices to show that the Tanisaki ideal $I_\lambda$ is a smooth point of $\HH_{M^\lambda}$ since it is the only torus-fixed point. The tangent space at $[I_\lambda]$ is isomorphic to $\Hom_{P}^{S_n}(I_\lambda,P/I_\lambda)$, the vector space of all $S_n$-equivariant $P$-module homomorphisms $I_\lambda \rightarrow P/I_\lambda$. Since $I_\lambda$ lies in the smoothable component of $\HH_{M^\lambda}$ which has dimension~$2$, it is enough to show that the dimension of the tangent space is at most~$2$.
		
	For this, note first that $r_\lambda(n-l) = l+1$ for all $0 \leq l \leq \lambda_2$ and that $\lambda_2 + 1 = r_\lambda(n-\lambda_2) = r_\lambda(n-\lambda_2 - 1) = \cdots = r_\lambda(n-\lambda_1 + 1)$. We abbreviate $e_r = e_r(x_{[n]})$. We will now repeatedly use the relation $e_r(x_{S \setminus i}) = e_r(x_S) - x_i e_{r-1}(x_{S \setminus i})$ for all $i \in S$. First we can observe that
	\begin{equation*}
		e_2(x_{[n-1]}) = e_2 - x_n e_1(x_{[n-1]}) = e_2 - x_n e_1 + x_n^2,
	\end{equation*}
	hence $x_n^2 \in I_\lambda$. Let $I \coloneqq (p_1,x_1^2, \ldots, x_n^2) \subseteq I_\lambda$, where $p_1 = e_1$ is the first power sum polynomial. Using the Newton identities and the last equation we can see that $e_2(x_{[n-1]}) \in I$. Similarly, $e_r(x_{[n-1]}) \in I$ for all $r \geq 2 = r_\lambda(n-1)$. Inductively, using the same idea, for all $l \leq \lambda_2$ we have
	\begin{align*}
		e_r(x_{[n-l]}) &= e_r(x_{[n-l]\cup n}) - x_n e_{r-1}(x_{[n-l]}) \\ &= e_r(x_{[n-l]\cup n}) - x_n e_{r-1}(x_{[n-l]\cup n}) + x_n^2 e_{r-2}(x_{[n-l]}),
	\end{align*}
	and for $r \geq r_\lambda(n-l) = l+1 = r_\lambda(n-l+1) + 1$ all three terms lie in $I$. If $\lambda_1 \leq \lambda_2 + 1$, this implies $I_\lambda = I$. If $\lambda_1 \geq \lambda_2 + 2$, we continue as follows: For all $l$ in the range $\lambda_1 - 1 \geq l \geq \lambda_2 + 1$, the partial elementary symmetric polynomials $e_{r_\lambda(|S|)}(x_S)$, $|S| = n-l$, all have the same degree $\lambda_2 + 1$, and for $|S| = n-\lambda_1+1 = \lambda_2 + 1$ we get the square-free monomial of degree $\lambda_2 + 1$, so all other generators in this range are redundant. In the end we obtain $I_\lambda = (p_1,x_1^2, \ldots, x_n^2, S_n \cdot (x_1 x_2 \cdots x_{\lambda_2+1}))$, where $S_n \cdot (x_1 x_2 \cdots x_{\lambda_2+1}) $ denotes the $S_n$-orbit of the monomial $x_1 x_2 \cdots x_{\lambda_2+1}$. On the other hand, $P/I_\lambda \cong M^\lambda = \oplus_{0 \leq k \leq \lambda_2} S^{(n-k,k)}$ as follows from the combinatorial formula for the Kostka numbers. These two facts together give even more: If $\lambda_1 \geq \lambda_2 + 2$, what we have proven so far applied to the partition $\mu \coloneqq (\lambda_1 - 1, \lambda_2 + 1)$ shows further that $I_\lambda$ is already generated by $I$ together with only the \emph{Specht polynomials} of shape $\mu = (\lambda_1 - 1,\lambda_2 + 1)$. Indeed, the canonical surjection
	\begin{equation*}
		\oplus_{0 \leq k \leq \lambda_2+1} S^{(n-k,k)} \cong P/I_\mu \twoheadrightarrow P/I_\lambda \cong \oplus_{0 \leq k \leq \lambda_2} S^{(n-k,k)}
	\end{equation*}
	has kernel $S^\mu$, embedded in degree $d(\mu) = \lambda_2 + 1$. Moreover, each Specht module $S^{(n-k,k)}$ in $P/I_\lambda$ is embedded in degree $k$, hence in degree $< \lambda_2 + 1$. Therefore, if $I'$ denotes the ideal generated by $I$ and all Specht polynomials of shape $\mu$, we have a canonical surjection $P/I' \twoheadrightarrow P/I_\lambda$ which is an isomorphism in degrees $\leq \lambda_2 + 1$. But $I_\lambda$ is generated in degrees $\leq \lambda_2 + 1$, so $I' = I_\lambda$.
		
	Since $M^\lambda$ does not contain any copy of $S^\mu$, we deduce that a homomorphism $f \in \Hom_{P}^{S_n}(I_\lambda,P/I_\lambda)$ is determined already by its values $f(p_1) = \alpha \cdot 1$, $f(p_2) = \beta \cdot 1$ and $f(x_i^2-x_j^2) = \gamma \cdot (x_i - x_j)$ for some $\alpha, \beta, \gamma \in \CC$. Hence it suffices to show that $\alpha$, $\beta$, $\gamma$ must satisfy some non-trivial linear relation. If $\lambda_1 > \lambda_2$ we will show that necessarily $\beta = 0$ and if $\lambda_1 = \lambda_2 = \frac{n}{2}$ we will show that $\alpha = \frac{n}{2} \gamma$, finishing the proof. We start with $\lambda_1 > \lambda_2$. Since $x_1 x_2 \cdots x_{\lambda_2+1} \in I_\lambda$, we can consider its image under $f$. Write $f(x_1 x_2 \cdots x_{\lambda_2+1}) = f_0 + f_1 + \ldots + f_{\lambda_2}$ as a sum of its graded pieces. Then $x_1^2 x_2 \cdots x_{\lambda_2 + 1} \in I_\lambda$ is both a multiple of $x_1^2$ and of $x_1 x_2 \cdots x_{\lambda_2+1}$ and therefore maps to both $\frac{\beta}{n} x_2 \cdots x_{\lambda_2 + 1}$ and $x_1 f(x_1 x_2 \cdots x_{\lambda_2+1})$ at the same time. Therefore, all graded pieces of the two must be equal in $P/I_\lambda$, in particular
	\begin{equation*}
		\beta x_2 \cdots x_{\lambda_2 + 1} - n x_1 f_{\lambda_2 - 1} \in I_\lambda.
	\end{equation*}
	Hence, this polynomial must annihilate every Specht polynomial of shape $\lambda$. However, since $\lambda_1 > \lambda_2$, there exists a Specht polynomial not involving the variable $x_1$ at all (putting the $1$ in the upper right corner of the tableau) and containing the monomial $x_2 \cdots x_{\lambda_2 + 1}$ (putting these indices successively in the second row, for example). This Specht polynomial is annihilated by the above polynomial if and only if $\beta = 0$.
		
	If $\lambda_1 = \lambda_2 = \frac{n}{2}$, we have the following relation: Define $g \coloneqq (x_1-x_2) (x_3-x_4) \cdots (x_{n-1} - x_n)$. Then
	\begin{equation*}
		g p_1 = \sum_{i \text{ odd}} \frac{g}{x_i - x_{i+1}} (x_i^2 - x_{i+1}^2).
	\end{equation*}
	This implies that $(\alpha - \frac{n}{2}\gamma) \cdot g \in I_\lambda$ which, since $g$ is a Specht polynomial of shape $\lambda$, is only possible if $\alpha = \frac{n}{2} \gamma$.
\end{proof}

\subsection{Two ideal inclusions}
While exploring Tanisaki ideals we encountered the containments of a certain ideal generated by Specht polynomials, a simple monomial ideal and the Tanisaki ideal. Although these inclusions are not used in our understanding of the invariant Hilbert schemes, we do include them as we have not found them in the literature.

For a partition $\mu \vdash n$ of length $m$ and $1 \leq k \leq m-1$ we define $R_k(\mu):=\mu_m+\mu_{m-1}+\ldots+\mu_{k+1}+1$, and $R_m(\mu) := 1$. We define an ideal $\Tilde{I}_\mu$ associated with $\mu$ as $$\Tilde{I}_\mu = (p_1,\ldots,p_{m-1},S_n \cdot x_1^m,S_n \cdot (x_1x_2\cdots x_{R_k(\mu)})^k : 1 \leq k \leq m-1).$$ 

We claim $\Tilde{I}_\mu \subseteq I_\mu$ and $(p_1,\ldots,p_{m-1},\spe_T | \sh (T) \not \unrhd \mu) \subseteq I_\mu$. The Tanisaki ideal of $\mu$ is the associated graded ideal to the $S_n$-orbit of any point $a \in \CC^n$ whose stabilizer in $S_n$ is conjugate to the Young subgroup $S_\mu$ \cite{GarsiaProcesi1992}. To see the first inclusion let $a_1,\ldots,a_m \in \CC$ be the distinct coordinates of $a$ such that $a_i$ occurs $\mu_i$ times in $a$. Then for any $1 \leq k \leq m$ and any subset $I\subset [n]$ of cardinality $R_k(\mu)$ we have $\{a_i | i \in I\} \cap \{a_1\ldots,a_k\} \neq \emptyset$ by the pigeonhole principle. Thus the polynomial $\prod_{i=1}^{R_k(\mu)}\prod_{j=1}^k(x_i-a_j)$ vanishes on the entire orbit $S_n \cdot \{a\}$. Also $\prod_{i=1}^m(x_1-a_i)$ vanishes on the orbit. However, the top degree terms of those polynomials are precisely the monomial generators of the ideal $\Tilde{I}_\mu$. This proves $\Tilde{I}_\mu \subset I_\mu$.
For the other inclusion, we note that for $a \in \CC^n$ we know that $p_i-p_i(a)$ is contained in the vanishing ideal of $S_n \cdot \{a\}$ and the containment of $\spe_T$ with $\sh(T) \not \unrhd \mu$ follows from \cite[Corollary~1]{moustrou2021symmetric}.

\begin{lemma}
    We have $(p_1,\ldots,p_n,\spe_T : \sh (T) \not \unrhd \mu) \subseteq \Tilde{I}_\mu \subseteq I_\mu$. 
\end{lemma}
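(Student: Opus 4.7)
The second inclusion $\Tilde{I}_\mu \subseteq I_\mu$ is already established in the paragraph immediately preceding the statement, so the plan is only to prove the first inclusion by checking that each listed generator belongs to $\Tilde{I}_\mu$.

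For the power sums: $p_1, \ldots, p_{m-1}$ are generators of $\Tilde{I}_\mu$ by definition, and for $r \geq m$ the factorization $x_i^r = x_i^{r-m} \cdot x_i^m$ shows that each term of $p_r = \sum_{i=1}^n x_i^r$ is a multiple of the generator $x_i^m \in \Tilde{I}_\mu$, so $p_r \in \Tilde{I}_\mu$.

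For the Specht polynomials, fix $T$ of shape $\lambda \not\unrhd \mu$; the plan is to show that every individual monomial of $\spe_T$ lies in $\Tilde{I}_\mu$. Since $|\lambda| = |\mu| = n$, the non-dominance condition is equivalent to the existence of some $k \geq 1$ with $\sum_{i > k} \lambda_i > \sum_{i > k} \mu_i$, and the smallest such $k$ lies in $[1,m]$ (otherwise $\sum_{i > m} \mu_i = 0$ would force $\len(\lambda) > m$, so that $k = m$ would already satisfy the inequality, contradicting minimality). Writing $\spe_T = \prod_j \Delta_{T_j}$ where $T_j$ is the $j$-th column of $T$ of length $\lambda'_j$, every monomial in the expansion of $\Delta_{T_j}$ assigns the exponents $\{0, 1, \ldots, \lambda'_j - 1\}$, in some order, to the variables of column $j$. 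Hence in any monomial of $\spe_T$ the total number of variables carrying exponent $\geq k$ equals $\sum_j \max(0, \lambda'_j - k) = \sum_{i > k} \lambda_i \geq R_k(\mu)$ (with the convention $R_m(\mu) = 1$ if $k = m$). Choosing $R_k(\mu)$ such variables, say $x_{i_1}, \ldots, x_{i_{R_k(\mu)}}$, the monomial is divisible by $(x_{i_1} \cdots x_{i_{R_k(\mu)}})^k$, which is either $x_{i_1}^m$ (when $k = m$) or an $S_n$-translate of the generator $(x_1 \cdots x_{R_k(\mu)})^k$ (when $k < m$); in both cases it lies in $\Tilde{I}_\mu$, and so does $\spe_T$.

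The most delicate step I anticipate is extracting the appropriate value of $k$ from the failure of dominance and matching it with the corresponding monomial generator of $\Tilde{I}_\mu$. The underlying counting identity $\sum_j \max(0, \lambda'_j - k) = \sum_{i > k} \lambda_i$ is just two ways of counting the boxes of the Young diagram of $\lambda$ in rows strictly below row $k$, and the exponent pattern in a Vandermonde expansion is standard; the real content lies in handling the boundary case $k = m$ uniformly with the cases $k < m$, which is precisely what the convention $R_m(\mu) = 1$ allows.
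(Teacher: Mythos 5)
Your proof is correct and follows essentially the same route as the paper's: both expand $\spe_T$ monomial by monomial, observe that each surviving monomial assigns the exponents $\{0,1,\ldots,|C_j|-1\}$ to the variables in column $j$, count the number of variables with exponent at least $k$ as $\sum_{i>k}\lambda_i$ (which the paper writes as $R_k(\lambda)-1$), and then extract a suitable $k$ from the failure of dominance. The only addition you make is the explicit (trivial) observation that $p_m,\ldots,p_n$ lie in $\Tilde{I}_\mu$ because $x_i^m$ divides each term; the paper leaves this unstated.
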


\begin{proof}
It remains to prove the first inclusion. For this, we show the stronger containment
\[(\spe_T | \sh (T) \not \unrhd \mu) \subseteq (S_n \cdot x_1^m, S_n \cdot (x_1 x_2 \cdots x_{R_k(\mu)})^k : 1 \leq k \leq m-1) .\]
Let $T$ be a Young tableau of shape $\lambda \not \unrhd \mu$ with $l$ columns $C_1,\ldots,C_l$. We have $$\spe_T = \pm \prod_{k=1}^l\prod_{\substack{i < j \\ i,j \in C_k}} (x_i-x_j).$$ Any monomial occurring in $\spe_T$ is a permutation of the monomial
$$x_1^{|C_1|-1}x_2^{|C_1|-2}\cdots x^1_{|C_1|-1} \cdot x_{|C_1|}^{|C_2|-1} x_{|C_1|+1}^{|C_2|-2} \cdots x^1_{|C_1|+|C_2|-2} \cdots x_{|C_1|+\ldots+|C_{l-1}|-l+2}^{|C_l|-1}\cdots x_{|C_1|+\ldots+|C_{l}|-l}^1.$$
This monomial lies in the ideal $(S_n \cdot x_1^m, S_n \cdot (x_1 x_2 \cdots x_{R_k(\mu)})^k | 1 \leq k \leq m-1)$ if and only if there exists an integer $1 \leq k \leq m$ such that for at least $R_k(\mu)$ distinct indices $j$, $x_j^k$ divides the monomial. For each column $C_i$ we have $\max\{|C_i|-k,0\}$ distinct variables occurring with exponent at least $k$ in the monomial. Now, 
\begin{align*}
    \sum_{i=1}^l\max\{|C_i|-k,0\} &= \sum_{i=1, |C_i| \geq k+1}^l |C_i|-k \\ 
    &= \sum_{t \geq k+1}\sum_{\substack{i=1, \\ |C_i|=t}}^l t-k \\
     &= \sum_{t \geq k+1} (\lambda_t-\lambda_{t+1})(t-k) \\
      &= \sum_{r \geq 1} (\lambda_{k+r}-\lambda_{k+r+1})r \\
      &= \lambda_{k+1}-\lambda_{k+2}+2\lambda_{k+2}-2\lambda_{k+3}+\ldots \\
      &= \lambda_{k+1} + \lambda_{k+2} + \lambda_{k+3} + \ldots \\
      &= R_k(\lambda)-1.
\end{align*}
Since $\# \{ j : (\lambda')_j \geq k \} = \lambda_k$ and because $\mu \not \unlhd \lambda$, there is an integer $s$ with $\mu_1+\ldots+\mu_s > \lambda_1 + \ldots +\lambda_s$. However, this is equivalent to $n - (\mu_1+\ldots+\mu_s) < n - (\lambda_1+\ldots+\lambda_s)$ which in turn is equivalent to $\mu_m + \ldots + \mu_{s+1} + 1 < \lambda_{\len (\lambda)}+\ldots+\lambda_{s+1}+1$. The left hand side equals $R_s(\mu)$ and the right hand side equals $R_s(\lambda)$. Thus, we obtain $R_s(\mu) \leq R_s(\lambda)-1$, as desired.
\end{proof}

In general, both ideal inclusions are strict. The first inclusion is strict already for the partition $(2,1)$, and the second inclusion is strict, for example, for $(3,3,1,1)$.

\section{Classification for $r \leq 2n$}\label{section:classification}

\subsection{Homogeneous symmetric ideals}
Our classification is based on the theory of higher Specht polynomials \cite{Terasoma1993Higher}. These polynomials can be used to produce an explicit isotypic decomposition of the degree~$d$ part $P_d$ of the polynomial ring $P$. This decomposition becomes uniform if the number of variables $n$ is at least $2d$ \cite{Riener2013Symmetries}.

\begin{lemma}\label{lem:classification}
    Let $n \geq 3$. Below we give explicit decompositions of $P_d$ into irreducible $S_n$-representations for $d \leq 3$ using slightly modified higher Specht polynomials. In every direct summand, all indices are meant to run through all of $[n]$ but such that different letters are assigned different values.
    \begin{equation*}
        P_1 = \langle p_1 \rangle  \oplus  \langle x_i - x_j \rangle = S^{(n)} \oplus S^{(n-1,1)}.
    \end{equation*}
    For $n \geq 4$ we have
    \begin{align*}
        P_2 &= \langle p_1^2 \rangle \oplus \langle p_2 \rangle \\
        &\oplus \langle p_1(x_i - x_j) \rangle \oplus \langle x_i^2 - x_j^2 \rangle \\
        &\oplus \langle (x_i - x_j)(x_k - x_l) \rangle \\
        &= 2S^{(n)} \oplus 2S^{(n-1,1)} \oplus S^{(n-2,2)},
    \end{align*}
    while for $n=3$ instead
    \begin{align*}
        P_2 &= \langle p_1^2 \rangle \oplus \langle p_2 \rangle \\
        &\oplus \langle p_1(x_i - x_j) \rangle \oplus \langle x_i^2 - x_j^2 \rangle \\
        &= 2S^{(3)} \oplus 2S^{(2,1)}.
    \end{align*}
    For $n \geq 6$ we have
    \begin{align*}
        P_3 &= \langle p_1^3 \rangle \oplus \langle p_1 p_2 \rangle \oplus \langle p_3 \rangle \\
        &\oplus \langle p_1^2(x_i - x_j) \rangle \oplus \langle p_2(x_i - x_j) \rangle \oplus \langle p_1(x_i^2 - x_j^2) \rangle \oplus \langle x_i^3 - x_j^3 \rangle \\
        &\oplus \langle p_1(x_i-x_j)(x_k-x_l) \rangle \oplus \langle (x_i+x_j+x_k+x_l)(x_i-x_j)(x_k-x_l) \rangle \\
        &\oplus \langle (x_i-x_j)(x_i-x_k)(x_j-x_k) \rangle \\
        &\oplus \langle (x_i-x_j)(x_k-x_l)(x_s-x_t) \rangle \\
        &= 3S^{(n)} \oplus 4S^{(n-1,1)} \oplus 2S^{(n-2,2)} \oplus S^{(n-2,1,1)} \oplus S^{(n-3,3)},
    \end{align*}
    while for $n=5$,
    \begin{align*}
        P_3 &= \langle p_1^3 \rangle \oplus \langle p_1 p_2 \rangle \oplus \langle p_3 \rangle \\
        &\oplus \langle p_1^2(x_i - x_j) \rangle \oplus \langle p_2(x_i - x_j) \rangle \oplus \langle p_1(x_i^2 - x_j^2) \rangle \oplus \langle x_i^3 - x_j^3 \rangle \\
        &\oplus \langle p_1(x_i-x_j)(x_k-x_l) \rangle \oplus \langle (x_i+x_j+x_k+x_l)(x_i-x_j)(x_k-x_l) \rangle \\
        &\oplus \langle (x_i-x_j)(x_i-x_k)(x_j-x_k) \rangle \\
        &= 3S^{(5)} \oplus 4S^{(4,1)} \oplus 2S^{(3,2)} \oplus S^{(3,1,1)},
    \end{align*}
    for $n=4$,
    \begin{align*}
        P_3 &= \langle p_1^3 \rangle \oplus \langle p_1 p_2 \rangle \oplus \langle p_3 \rangle \\
        &\oplus \langle p_1^2(x_i - x_j) \rangle \oplus \langle p_2(x_i - x_j) \rangle \oplus \langle p_1(x_i^2 - x_j^2) \rangle \oplus \langle x_i^3 - x_j^3 \rangle \\
        &\oplus \langle p_1(x_i-x_j)(x_k-x_l) \rangle \\
        &\oplus \langle (x_i-x_j)(x_i-x_k)(x_j-x_k) \rangle \\
        &= 3S^{(4)} \oplus 4S^{(3,1)} \oplus S^{(2,2)} \oplus S^{(2,1,1)},
    \end{align*}
    and for $n=3$,
    \begin{align*}
        P_3 &= \langle p_1^3 \rangle \oplus \langle p_1 p_2 \rangle \oplus \langle p_3 \rangle \\
        &\oplus \langle p_1^2(x_i - x_j) \rangle \oplus \langle p_2(x_i - x_j) \rangle \oplus \langle p_1(x_i^2 - x_j^2) \rangle \\
        &\oplus \langle (x_1-x_2)(x_1-x_3)(x_2-x_3) \rangle \\
        &= 3S^{(3)} \oplus 3S^{(2,1)} \oplus S^{(1,1,1)}.
    \end{align*}
\end{lemma}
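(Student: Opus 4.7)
The plan is to reduce the decomposition of $P_d$ to the known decomposition of the coinvariant algebra $C \coloneqq P/(p_1,\ldots,p_n)$ via higher Specht polynomials, and then to multiply by symmetric polynomials of complementary degree. By the isomorphism of graded $S_n$-modules $P \cong C \otimes_\CC \CC[p_1,\ldots,p_n]$ recalled in Section~\ref{section:preliminaries}, it suffices to decompose $C_{d_1}$ as an $S_n$-module for $0 \leq d_1 \leq d$ and then to tensor with a basis of $(P^{S_n})_{d-d_1}$ in power-sum symmetric polynomials.

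First I would apply Terasoma's theorem, which identifies a basis of $C$ with the higher Specht polynomials $F_T^S$ and, for each fixed standard $S$, exhibits an explicit copy of $S^{\mathrm{sh}(S)}$ spanned by the $F_T^S$ with $T$ standard of shape $\mathrm{sh}(S)$, placed in degree equal to the sum of entries of $i(S)$. For $d_1 \leq 3$ one enumerates the relevant standard tableaux by hand: this yields $C_0 \cong S^{(n)}$, $C_1 \cong S^{(n-1,1)}$, $C_2 \cong S^{(n-1,1)} \oplus S^{(n-2,2)}$ (omitting the last summand when $n = 3$), and $C_3 \cong S^{(n-1,1)} \oplus S^{(n-2,2)} \oplus S^{(n-2,1,1)} \oplus S^{(n-3,3)}$ with the $(n-3,3)$-summand present only for $n \geq 6$. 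Tensoring with the symmetric factors $1, p_1, p_2, p_1^2, p_3, p_1 p_2, p_1^3$ in appropriate complementary degrees accounts for all multiplicities claimed in the statement.

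For each family listed in the lemma I would then identify it with a specific summand of this tensor decomposition. Each family of the form $f \cdot \spe_T$ with $f$ a symmetric polynomial is $S_n$-invariant and generated by a Specht polynomial, hence a copy of the corresponding Specht module. The slightly subtler family is $\langle (x_i + x_j + x_k + x_l)(x_i - x_j)(x_k - x_l) \rangle$, which I would identify with the copy of $S^{(n-2,2)}$ inside $C_3$ (living in degree $d((n-2,2)) + 1 = 3$), realized as a higher Specht polynomial $F_T^S$ for a suitable non-minimal-index $S$. Modulo $(p_1,\ldots,p_n)$ this span is linearly disjoint from $p_1 \cdot \langle (x_i - x_j)(x_k - x_l) \rangle$, confirming that the two copies of $S^{(n-2,2)}$ in $P_3$ are genuinely independent.

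The hard part will be the bookkeeping for small $n$: when some partition among $(n-2,2)$, $(n-2,1,1)$, $(n-3,3)$ is not a valid partition of $n$, or when a family requires more distinct indices than are available (most notably $\langle (x_i - x_j)(x_k - x_l)(x_s - x_t) \rangle$ for $n < 6$, and the coincidence $x_i + x_j + x_k + x_l = p_1$ for $n = 4$ which collapses two $S^{(2,2)}$-families into one), the corresponding summand must simply be dropped or identified with another. The proof closes with a dimension check: by the hook length formula the stated direct sum has total dimension $\binom{n+d-1}{d} = \dim_\CC P_d$, forcing equality.
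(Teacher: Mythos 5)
The paper provides no explicit proof of this lemma; it is stated as a direct consequence of the Terasoma--Yamada theory of higher Specht polynomials (and the uniformity result of Riener et al.\ cited in the preamble). Your approach---decompose the coinvariant algebra $C = P/(p_1,\ldots,p_n)$ degree by degree via higher Specht polynomials, then use the graded $S_n$-module isomorphism $P \cong C \otimes_\CC \CC[p_1,\ldots,p_n]$ and tensor with the symmetric polynomials $1, p_1, p_2, p_1^2, \ldots$ of complementary degree---is exactly the intended one, and your identification of the ``modified'' family $\langle (x_i+x_j+x_k+x_l)(x_i-x_j)(x_k-x_l)\rangle$ with a higher Specht polynomial of shape $(n-2,2)$ at degree $d(\lambda)+1$ is correct.

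However, your stated uniform decomposition of $C_3$ is wrong for small $n$, and your list of bookkeeping exceptions does not cover one of the failures. The fake degrees of $S^{(n-1,1)}$ (the degrees in which the reflection representation appears in $C$) are exactly $1,2,\ldots,n-1$, so for $n=3$ the summand $S^{(2,1)}$ is \emph{absent} from $C_3$, even though $(2,1)$ is a perfectly valid partition of $3$ and the family $\langle x_i^3-x_j^3\rangle$ is nonempty; it is simply redundant modulo the lower-degree families. Your stated caveats (invalid partitions dropped, families with too few indices dropped, the $n=4$ coincidence $x_i+x_j+x_k+x_l = p_1$) do not catch this: the $n=3$ claimed $C_3 \cong S^{(2,1)} \oplus S^{(1,1,1)}$ has dimension $3$, while $\dim C_3 = 1$ for $S_3$ (the Poincar\'e polynomial of $C$ is $1+2q+2q^2+q^3$). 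Likewise, $S^{(2,2)}$ is absent from $C_3$ for $n=4$ (its fake degrees are $2$ and $4$), which is the cleaner way to state the ``collapse'' you observed. Your closing dimension check would of course reveal the overcount, but as written it is presented as a sanity check rather than as the mechanism that forces these exceptions, so the argument has a gap for $n=3$: you would need to observe that among the two standard tableaux of shape $(2,1)$ the index sums are $1$ and $2$ only, so no copy of $S^{(2,1)}$ lives in degree $3$, and this is what forces dropping $\langle x_i^3-x_j^3\rangle$ in the $n=3$ display.
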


\begin{table}
\centerline{
    \begin{tabular}{|c|c|c|c|c|}
        \hline
        Row & $P/I$ & Homogeneous symmetric ideals $I$ & Dim. & Geometry\\
        \Xhline{3\arrayrulewidth}
        1 & $rS^{(n)}$ &  $(x_i-x_j) + \mm^r$ & $r$ & smooth\\
        \hline
        2 & $(r-n+1)S^{(n)} \oplus S^{(n-1,1)}$ & \makecell{
        $(x_i-x_j)_{\geq 2} + \mm^{r-n+1}$ \\
        $(p_1(x_i-x_j),x_i^2-x_j^2,(x_i-x_j)(x_k-x_l)) + \mm^{r-n}$ 
        } & $r-n+2$ & \makecell{singular \\ smooth}\\
        \hline
        3 & $S^{(n)} \oplus S^{(n-1,1)}$ & $(p_1) + \mm^2$ & $2$ & smooth\\
        \hline
        4 & $2S^{(n)} \oplus S^{(n-1,1)}$ & \makecell{
        $\mm^2$\\
        $(p_1,x_i^2-x_j^2,(x_i-x_j)(x_k-x_l)) + \mm^3$
        } & $3$ & \makecell{singular \\ smooth}\\
        \hline
        5 & $3S^{(n)} \oplus S^{(n-1,1)}$ & $(ap_1^2+bp_2,p_1(x_i-x_j),x_i^2-x_j^2,(x_i-x_j)(x_k-x_l)) + \mm^3$ & $4$ & $[-1:n]$ \\
        \hline
        6 & $S^{(n)} \oplus 2S^{(n-1,1)}$ & $(p_1,p_2,(x_i-x_j)(x_k-x_l)) + \mm^3$ & $1$ & smooth\\
        \hline
        7 & $2S^{(n)} \oplus 2S^{(n-1,1)}$ &  \makecell{
        $(p_1,(x_i-x_j)(x_k-x_l)) + \mm^3$ \\
        $(p_1,p_2,(x_i-x_j)(x_k-x_l)) + \mm^4$ \\
        $(p_1^2,p_2,ap_1(x_i-x_j)+b(x_i^2-x_j^2),(x_i-x_j)(x_k-x_l)) + \mm^3$
        } & $4$ & \makecell{ smooth \\ smooth \\ $[1:0]$} \\
        \hline
       8 &  $S^{(5)} \oplus S^{(4,1)} \oplus S^{(3,2)}$ & $(p_1,p_2,x_i^2-x_j^2) + \mm^3$ & $2$ & smooth\\
        \hline
       9 & $S^{(4)} \oplus S^{(3,1)} \oplus S^{(2,2)}$ & $(p_1,p_2,x_i^2-x_j^2) + \mm^3$ & $2$ & smooth\\
        \hline
       10 & $2S^{(4)} \oplus S^{(3,1)} \oplus S^{(2,2)}$ & \makecell{
        $(p_1,x_i^2-x_j^2) + \mm^3$ \\
        $(p_1^2,p_2,p_1(x_i-x_j),x_i^2-x_j^2) + \mm^3$
        } & $3$ & \makecell{ smooth \\ smooth}\\
        \hline
      11 & $3S^{(4)} \oplus S^{(3,1)} \oplus S^{(2,2)}$ & $(ap_1^2+bp_2,p_1(x_i-x_j),x_i^2-x_j^2) + \mm^3$ & $4$ & smooth\\
      \hline
      12 & $S^{(3)} \oplus 2S^{(2,1)} \oplus S^{(1,1,1)}$ & $(p_1,p_2,p_3)$ & $3$ & smooth\\
      \hline
      13 & $2S^{(3)} \oplus 2S^{(2,1)}$ & $(p_1,p_2,(x_1-x_2)(x_1-x_3)(x_2-x_3))$ & $4$ & smooth\\
        \hline
    \end{tabular}
}
\caption{The homogeneous symmetric ideals for $r \leq 2n$\label{tab:classification}}
\end{table}

Table~\ref{tab:classification} provides the first main result of this section. It provides all homogeneous symmetric ideals $I \subseteq P = \CC[x_1,\ldots,x_n]$ for $n \geq 3$ and such that $r = \dim(P/I) \leq 2n$. Note that all unspecified indices in each appearing generator are meant to be \emph{distinct} and run through all of $[n]$. The generators of the form $(x_i-x_j)(x_k-x_l)$ must therefore be interpreted as only occurring for $n \geq 4$ but not for $n=3$. To avoid redundancy in the table, for row~2 one should take $r \geq n+3$. For example, for $r = n+2$ the first ideal of row~2 is the same as that of row~5 for $[a:b] = [-1:n]$. Finally, the second ideal of row~7 has colength $7>2n$ for $n=3$, so we only consider it for $n \geq 4$.

\begin{theorem}\label{thm:classification}
    Let $n \geq 3$. Table~\ref{tab:classification} lists all homogeneous symmetric ideals $I$ with $\dim_\CC P/I = r \leq 2n$ together with the isomorphism class of the $S_n$-module $P/I$. For all occurring representations $\rho$, the invariant Hilbert scheme $\HH_\rho$ is irreducible. Moreover, each point of $\HH_\rho$ corresponding to a homogeneous symmetric ideal is smooth or singular according to the last column of Table~\ref{tab:classification}, where a point $[a:b]$ of $\PP^1$ means that the corresponding ideal is the only singular point among the homogeneous ideals on this invariant Hilbert scheme. The second to last column gives the dimension of $\HH_\rho$.
\end{theorem}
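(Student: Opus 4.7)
The plan is to establish the three claims in sequence: completeness of the classification, irreducibility of each $\HH_\rho$, and the pointwise smoothness analysis.

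\textbf{Classification.} Let $I \subseteq P$ be a homogeneous symmetric ideal with $r = \dim_\CC(P/I) \leq 2n$. Since $I$ is homogeneous, $P/I$ decomposes into $S_n$-isotypic components, and the bound $r \leq 2n$ together with the dimensions of Specht modules (for instance $\dim S^{(n-1,1)} = n-1$ and $\dim S^{(n-2,2)} = n(n-3)/2$) sharply restricts which $S^\lambda$ contribute and bounds their multiplicities. For $n \geq 3$ this leaves a short list of admissible isotypic profiles for $P/I$. For each profile I would use Lemma~\ref{lem:classification} to identify the possible degrees in which each surviving copy of $S^\lambda$ can occur in $P/I$, and then invoke Lemma~\ref{lem:higherSpecht}: once the Specht polynomials of shape $\lambda$ in some degree $d$ lie in $I$, multiplication by symmetric polynomials forces the entire $\lambda$-isotypic part of $P$ in all degrees $\geq d$ into $I$. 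This pins down $I$ (up to finitely many parameters) from the representation content of $P/I$ and reproduces exactly the rows of Table~\ref{tab:classification}.

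\textbf{Irreducibility.} With one exception, each $\rho$ occurring in the table is a direct sum of permutation modules (a direct check using $M^\lambda = \bigoplus_\mu K_{\mu,\lambda} S^\mu$), so Lemma~\ref{lemma:radicalIdeals} provides a non-empty irreducible smoothable component $Z_\rho \subseteq \HH_\rho$, with dimension computable from Theorem~\ref{thm:singleOrbit} by additivity over orbits. Since every torus-orbit closure in $\HH_\rho$ contains a torus-fixed point, and torus-fixed points are exactly the homogeneous ideals, it suffices to show each homogeneous ideal listed in the row lies in $Z_\rho$. This follows from explicit one-parameter flat deformations of the non-monomial generators ($x_i^2 - x_j^2$, $(x_i - x_j)(x_k - x_l)$, $x_i^3 - x_j^3$, etc.) into products of distinct linear factors, together with distraction to handle $\mm^r$. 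The single exception is row~6 with $\rho = S^{(n)} \oplus 2 S^{(n-1,1)}$, which is not a non-negative combination of permutation modules: by Lemma~\ref{lemma:radicalIdeals} there are no radical ideals, and irreducibility must be verified directly using that there is only one torus-fixed point and the residual $\mathbb{G}_a$-action along the diagonal.

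\textbf{Smoothness analysis.} For each homogeneous $I$ in Table~\ref{tab:classification}, I would compute the tangent space $T_{[I]}\HH_\rho \cong \Hom_P^{S_n}(I,P/I)$ from an explicit minimal $S_n$-equivariant generating set of $I$: any such homomorphism is determined by the images of the generators modulo the $S_n$-equivariant $P$-linear syzygies. With the isotypic structure of $P/I$ read off from the table, the images lie in small explicit spaces and the computation reduces to finite-dimensional linear algebra, case by case. Comparing $\dim T_{[I]}\HH_\rho$ with $\dim \HH_\rho$ then decides smoothness (equality) or singularity (strict inequality), and gives the last column of the table. The main obstacle is rows~5 and~7, where a $\PP^1$-family of homogeneous ideals appears: one must identify the unique parameter value $[a:b]$ at which an additional $S_n$-equivariant relation among the generators of $I$ appears and inflates the tangent space --- the distinguished points being $[-1:n]$ and $[1:0]$ respectively.
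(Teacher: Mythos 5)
Your classification and tangent-space outlines follow the paper's actual route (case analysis via Lemma~\ref{lem:classification} plus dimension counts, then equivariant $\Hom_P^{S_n}(I,P/I)$ computations from an $S_n$-stable generating set), but your irreducibility argument has a genuine gap. You claim that, because every torus-orbit closure contains a torus-fixed point and the fixed points are exactly the homogeneous ideals, it ``suffices to show each homogeneous ideal listed in the row lies in $Z_\rho$.'' That inference only yields \emph{connectedness}. A hypothetical second irreducible component would meet the smoothable component precisely at such a fixed point: a non-smoothable ideal $J$ degenerates to a homogeneous ideal $\gr J$ lying in $Z_\rho$ without $J$ itself lying in $Z_\rho$. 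This is exactly why the paper, after noting that all homogeneous ideals are smoothable, must still prove for the rows containing \emph{singular} homogeneous ideals (rows~2, 4, 5, 7) that \emph{every} symmetric ideal whose associated graded equals such a singular ideal is again smoothable. That step is the bulk of the irreducibility proof: one parametrizes all possible generators of such $J$ (e.g.\ the $a_1,a_2,b_1,b_2,c_i$ families for $(x_i-x_j)_{\geq 2}+\mm^d$, the Macaulay2 computation of the minimal primes $\mathfrak{p}_1,\mathfrak{p}_2$ for $\mm^2$), bounds the dimension of these families, and for Row~7(c) runs an additional semicontinuity argument through the map $H \to \Hilb^{S_n\times\CC^*}_{\rho,h}(\CC^n)$ of Subsection~\ref{subsec:assocGraded}, using that the relevant ideal is not the associated graded of any radical ideal. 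None of this is present, or replaceable by, your fixed-point argument.

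Two smaller points. For the singular entries you need the \emph{exact} tangent space dimension (or at least a strict lower bound exceeding the local dimension), not just the upper bounds that determine the images of generators; the paper has to rule out extra syzygy-induced relations to get, e.g., dimension $d+2$ in Row~2(a) and $5$ at $[1:0]$ in Row~7(c), and this is real work, not routine linear algebra. Also, smoothness cannot be read off by comparing with ``$\dim\HH_\rho$'' (unknown before irreducibility is settled); the paper compares with the dimension of the smoothable component and separately exhibits, row by row, explicit radical ideals whose associated graded is the given homogeneous ideal to certify membership in that component — your sketch mentions deformations of generators but this membership verification needs to be carried out for each row, including the constrained choices (e.g.\ $(n-1)a+b=nc$, or $b-a=d-c$) that make the degenerations land on the intended ideal.
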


Let us point out that the representation of Row~6 in Table~\ref{tab:classification} is \emph{not} a direct sum of permutation modules. In particular, $\HH_\rho$ can be non-empty even for such $\rho$. In fact, for $\rho = S^{(n)} \oplus 2 S^{(n-1,1)}$, we have $\HH_\rho \cong \CC^1$ as the proof of Theorem~\ref{thm:classification} shows, so the only embedded symmetric deformations of the homogeneous ideal in Row~6 are those obtained from it via the action of $\mathbb{G}_a$.

Recently, Griffin studied the associated graded ideals of vanishing ideals of unions of two distinct $S_n$-orbits as analogs to Tanisaki ideals \cite{griffin2022orbit}. Interestingly, the unique singular point in Row~7(c) for $[a:b] = [1:0]$ is a homogeneous ideal which lies in the smoothable component of $\HH_\rho$ (since $\HH_\rho$ is irreducible) with $\rho = 2S^{(n)} \oplus 2 S^{(n-1,1)}$ but the proof of Theorem~\ref{thm:classification} shows that it cannot be obtained as the associated graded ideal of \emph{any} radical ideal.

\begin{remark}
    The fact that $\mm^2$ and $(x_i-x_j)_{\geq 2} + \mm^d$ for all $d \geq 3$ are singular points of the invariant Hilbert scheme for any $n \geq 3$ in particular recovers the well-known fact that $\mathrm{Hilb}_r(\CC^3)$ is singular for all $r \geq 4$. 
\end{remark}

\begin{proof}[Proof of Classification in Table~\ref{tab:classification}]
    This is a case distinction using Lemma~\ref{lem:classification}. If $I_1 = P_1$, then $I = \mm$, and if $I_1 = \langle x_i-x_j \rangle$, then $I = (x_i-x_j) + \mm^r$ for some $r$, so $P/I = r S^{(n)}$, corresponding to row~1 in Table~\ref{tab:classification}. Hence, we can assume that either $I_1 = \langle p_1 \rangle$ or $I_1 = 0$.\\
    
    We will begin with $I_1 = \langle p_1 \rangle$.
    In this case, $\dim(P/I)_{\leq 1} = n$, so $\dim(P/I)_{\geq 2} = r-n \leq n$. Now, $\dim(S^{(n-2,2)}) = \frac{n(n-3)}{2}$. For $n \geq 6$, this is always $>n$. Hence, the dimension constraint $r \leq 2n$ implies $(x_i-x_j)(x_k-x_l) \in I$ for all $n \geq 6$. Let us, therefore, first assume $(x_i-x_j)(x_k-x_l) \in I$ for any $n \geq 3$ (for $n=3$ this is a vacuous condition).
    
    If both of $p_2$ and $x_i^2-x_j^2$ do not lie in $I$, then necessarily $I = (p_1,(x_i-x_j)(x_k-x_l)) + \mm^3$ since already $\dim(P/I)_{\leq 2} = 2n$. This ideal is the first ideal in row~7 of Table~\ref{tab:classification}. If $p_2 \in I$ but $x_i^2 - x_j^2 \not\in I$, we get either the ideal of row~6 or, for $n \geq 4$, the second ideal of row~7. This follows using Lemma~\ref{lem:relations} and observing that $(x_i-x_j)(x_i-x_k)(x_j-x_k) \in ((x_i-x_j)(x_k-x_l)) \subseteq I$ for all $n \geq 4$, as the following equation shows:
    \begin{align*}
        (x_1-x_2)(x_1-x_3)(x_2-x_3) &= (x_3 - x_1) \cdot (x_1-x_2)(x_3-x_4) \\ &+ (x_1-x_2) \cdot (x_1-x_3)(x_2-x_4).
    \end{align*}
    For $n=3$ there are two more possibilities here, namely rows~12 and~13.
    If both $p_2 \in I$ and $x_i^2 - x_j^2 \in I$, then $I = (p_1) + \mm^2$, row~3. Otherwise, $x_i^2 - x_j^2 \in I$ but $p_2 \not\in I$. Applying the Reynolds operator $P \longrightarrow P^{S_n}, \ f \mapsto \frac{1}{n!}\sum_{\sigma \in S_n}\sigma(f)$ to $(x_1^2 - x_2^2)x_1$ gives a scalar multiple of $n p_3 - p_2p_1$, hence $p_3 \in I$. Therefore, $(P/I)_3$ does not contain any trivial representation. Moreover, $\dim(P/I)_{\leq 2} = n+1$. However, $(x_1-x_2)(x_1-x_3)(x_2-x_3) \in (x_i^2-x_j^2)$ for all $n \geq 3$. Indeed, the formula
    \begin{equation*}
        (x_1-x_2)(x_1-x_3)(x_2-x_3) = x_1(x_3^2-x_2^2) + x_2(x_1^2-x_3^2) + x_3(x_2^2-x_1^2)
    \end{equation*}
    can easily be verified. Moreover, by Lemma~\ref{lem:relations}, for all $n \geq 3$ we have $p_2(x_i-x_j), x_i^3-x_j^3 \in (p_1(x_i-x_j),x_i^2-x_j^2,(x_i-x_j)(x_k-x_l)) \subseteq I$, implying $I_3 = P_3$, so we obtain the second ideal of row~4.
    
    Otherwise, still assuming $I_1 = \langle p_1 \rangle$, we have $n \in \{4,5\}$ and $(x_i-x_j)(x_k-x_l) \not\in I$.
    If $n=5$, then necessarily $I = (p_1,p_2,x_i^2-x_j^2) + \mm^3$ for dimension reasons, which is row~8.
    If $n=4$, then we have $\dim(P/I)_{\leq 2} \geq 6$, so necessarily $x_i^2-x_j^2 \in I$ for dimension reasons. Applying again the Reynolds operator to $(x_1^2-x_2^2)x_1$, we get $p_3 \in I$. Hence, if $p_2 \in I$, we get row~9. If instead $p_2 \not\in I$, then we get the first ideal of row~10. This finishes the case $I_1 = \langle p_1 \rangle$.\\
    
    Secondly, we assume $I_1 = 0$, so $\dim(P/I)_{\leq 1} = n+1$. For $n \geq 5$ and $n=3$, all irreducible $S_n$-representations except for the trivial and the alternating one have dimension $\geq n-1$. Moreover, the least degree $k$ such that $P_k$ contains an alternating representation is $k = \binom{n}{2}$ which is $>n$ for all $n \geq 4$. Hence, for all $n \geq 5$, this forces that $(P/I)_{\geq 2}$ is either $0$, a direct sum of only trivial representations or $S^{(n-1,1)}$, and in the last case (even for $n=3,4$) this forces $I_3 = P_3$ for dimension reasons, so we obtain the third ideal of row~7 for some $(a,b) \neq (0,0)$. If $(P/I)_2 = 0$, then $I = \mm^2$, so we get the first ideal of row~4 (even for $n=3,4$). Otherwise, assume we have any $n \geq 3$ and that $(P/I)_{\geq 2}$ only consists of trivial representations, necessarily of exactly $(r-n-1)$ many. We claim that we can only get the two ideals of row~2 or the ideal of row~5 for some $(a,b) \neq (0,0)$ (the case $(a,b) = \lambda (-1,n)$ gives the same as the first ideal of row~2). The reasoning is as follows. Let $J$ be the ideal generated by all non-trivial representations in all degrees $2 \leq k \leq n-1$ and by $\mm^n$. Then, clearly, $J \subseteq I$. But we also know $J \subseteq (x_i-x_j)_{\geq 2} + \mm^n$. Next, we claim that $(P/J)_k$ has dimension at most $1$ for all $k \geq 3$. For this, observe that $p_1(x_i-x_j) \in I$ and $x_i^k-x_j^k \in I$ for all $k \geq 2$ for dimension reasons. Applying again the Reynolds operator to $p_d(x_1-x_2)x_1$, $d \in \{1,2\}$, and to $(x_1^k-x_2^k)x_1$ gives scalar multiples of $n p_d p_2 - p_d p_1^2$ and $np_{k+1} - p_kp_1$, respectively. The claim $\dim(P/J)_k \leq 1$ for all $k \geq 3$ easily follows from this. For dimension reasons, then, $J_k = (x_i-x_j)_k$ or $I_k = P_k$ for all $k \geq 3$. Hence, the same is true for $I_k$ for all $k \geq 3$. This shows that we indeed get one of the three ideals in rows~2 and~5.
    
    The only remaining cases are $n \in \{3,4\}$ and such that $(P/I)_{\geq 2}$ contains some non-trivial representation which is also different from $S^{(n-1,1)}$. For $n=4$, then, we must have $S^{(2,2)}$ in $(P/I)_2$, i.e., $(x_i-x_j)(x_k-x_l) \not\in I$. Hence, $\dim(P/I)_{\leq 2} \geq 7$. Moreover, both $p_1(x_i-x_j)$ and $x_i^2-x_j^2$ are necessarily in $I$ for dimension reasons, and some linear combination of $p_1^2$ and $p_2$ must also be contained in $I$, again for dimension reasons. Hence, we get either the second ideal of row~10 or the ideal of row~11 for some $(a,b) \neq (0,0)$. Finally, for $n=3$, the only possibility for a non-trivial representation in $(P/I)_{\geq 2}$ not equal to $S^{(2,1)}$ is the alternating representation $S^{(1,1,1)}$ in degree $3$, generated by $(x_1-x_2)(x_1-x_3)(x_2-x_3)$. Then $I$ contains $(p_1(x_i-x_j),x_i^2-x_j^2)$ for dimension reasons. But we have already seen that then automatically $(x_1-x_2)(x_1-x_3)(x_2-x_3) \in I$ as well, so this is impossible.
\end{proof}

\begin{lemma}\label{lem:relations}
    For all $n \geq 3$ we have
    \begin{gather*}
         x_1^3-x_2^3 \in (x_n p_1(x_1-x_2), p_1(x_1^2-x_2^2), p_2(x_1-x_2), (x_i-x_j)(x_k-x_l)), \\
        p_2(x_1-x_2), x_1^3-x_2^3 \in (x_n p_1(x_1-x_2),x_i^2-x_j^2,(x_i-x_j)(x_k-x_l)).
    \end{gather*}
\end{lemma}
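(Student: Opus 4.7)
Let $J$ and $J'$ denote the ideals in the first and second containments, respectively. In both cases the plan is to work modulo the ideal, systematically exploiting the ``cross'' relations $(x_i-x_j)(x_k-x_l) \equiv 0$ for $\{i,j\} \cap \{k,l\} = \emptyset$, together with their consequences obtained by multiplying by $(x_1+x_2)$ or by further variables. These relations collapse partial sums over the tail variables $x_3, \ldots, x_n$ to polynomials in $x_3$ alone, and the remaining generators then force additional congruences from which the desired ideal memberships follow.

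For the second containment, I would first observe that $x_i^2 - x_j^2 \in J'$ forces $x_i^2 \equiv x_j^2 \pmod{J'}$ for all $i, j$, and in particular $x_1^2 - x_2^2 \in J'$. Combined with $(x_1-x_2)(x_i-x_j) \in J'$ for distinct $i,j \geq 3$, one gets $p_1(x_1-x_2) \equiv (n-2)(x_1-x_2) x_3 \pmod{J'}$. Multiplying by $x_n$ and reducing $x_n(x_1-x_2) \equiv x_3(x_1-x_2)$ (trivial for $n=3$, and for $n \geq 4$ obtained from $(x_n-x_3)(x_1-x_2) \in J'$) gives $x_n p_1(x_1-x_2) \equiv (n-2) x_1^2(x_1-x_2) \pmod{J'}$. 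Since $x_n p_1(x_1-x_2) \in J'$ and $n-2 \neq 0$, this forces $x_1^3 - x_1^2 x_2 \in J'$, and then $p_2(x_1-x_2) \equiv n x_1^2(x_1-x_2) \in J'$ together with $x_1^3 - x_2^3 = (x_1^3 - x_1^2 x_2) + x_2(x_1^2 - x_2^2) \in J'$ follow immediately.

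For the first containment the reductions are more delicate because $x_i^2 - x_j^2 \notin J$ a priori. The cross relations and their derivatives (including $(x_1^2-x_2^2)(x_i-x_j) \in J$) allow the replacement of $(x_1-x_2)x_i$, $(x_1-x_2)x_i^2$, $(x_1^2-x_2^2)x_i$ and $(x_1-x_2)x_i x_j$ by their specialization at $i = j = 3$. Expanding $x_n p_1(x_1-x_2)$ and $p_2(x_1-x_2)$ this way gives $x_3(x_1^2-x_2^2) \equiv (x_1-x_2)(x_1^2+x_2^2) \pmod{J}$, and substituting into the expansion of $p_1(x_1^2-x_2^2)$ produces
\begin{equation*}
    (n-1)(x_1^3 - x_2^3) \equiv (n-3)\, x_1 x_2 (x_1-x_2) \pmod{J},
\end{equation*}
which settles the case $n = 3$ immediately. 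For $n \geq 4$ the extra relation I would use is $\sum_{i \neq j;\, i, j \geq 3}(x_1-x_i)(x_2-x_j) \in J$: multiplying by $(x_1-x_2)$ and reducing as before yields $(n-2) x_1 x_2(x_1-x_2) \equiv (n-1)(x_1-x_2)(x_1^2+x_2^2) \pmod{J}$. Combining the two congruences via $(x_1-x_2)(x_1^2+x_2^2) = (x_1^3-x_2^3) - x_1 x_2(x_1-x_2)$ forces $n \cdot x_1 x_2(x_1-x_2) \equiv 0 \pmod{J}$, hence $x_1 x_2(x_1-x_2) \in J$ and finally $x_1^3 - x_2^3 \in J$. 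The main obstacle is extracting this second linear congruence for $n \geq 4$, which requires careful bookkeeping of the contributions of $(x_1-x_i)(x_2-x_j)$ across all index pairs; everything else is a systematic unwinding of the structure provided by the cross relations.
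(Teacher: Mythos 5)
Your argument is correct, and I checked the key steps. For the second containment, the reduction chain is: $x_i^2 \equiv x_j^2$ collapses $(x_1+x_2)(x_1-x_2)$, the cross relations collapse $(x_3+\dots+x_n)(x_1-x_2)$ to $(n-2)x_3(x_1-x_2)$, and then $x_n p_1(x_1-x_2) \equiv (n-2)x_1^2(x_1-x_2) \pmod{J'}$ yields $x_1^2(x_1-x_2) \in J'$, from which both memberships follow immediately. For the first containment, the congruences from the three inhomogeneous generators produce
\begin{equation*}
  x_3(x_1^2-x_2^2) \equiv (x_1^2+x_2^2)(x_1-x_2), \qquad (n-1)(x_1^3-x_2^3) \equiv (n-3)\,x_1x_2(x_1-x_2) \pmod J,
\end{equation*}
handling $n=3$; and for $n\geq 4$ the relation obtained from $\sum_{i\neq j;\,i,j\geq 3}(x_1-x_i)(x_2-x_j)(x_1-x_2) \in J$, after the same reductions and dividing by $(n-2)(n-3)$, gives $(n-2)x_1x_2(x_1-x_2) \equiv (n-1)(x_1^2+x_2^2)(x_1-x_2)$, which combined with the previous congruence forces $n\,x_1x_2(x_1-x_2)\in J$ and hence $x_1^3-x_2^3\in J$. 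All of this is sound.

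Your route is genuinely different from the paper's. The paper writes down two explicit polynomial combinations $f$ and $g$ of the generators and proves they lie in $((x_i-x_j)(x_k-x_l))$ by observing that this ideal is the (radical) Specht ideal of shape $(n-2,2)$, hence the vanishing ideal of $\{S_n\cdot(b,a,\dots,a)\}$, so membership can be certified by evaluating $f$ and $g$ at $(b,a,\dots,a)$ and $(a,b,a,\dots,a)$. That argument is short once one has the right $f,g$ in hand, but it imports the nontrivial theorem that Specht ideals are radical (Murai; Haiman--Woo). Your argument instead proceeds entirely by syntactic congruence manipulation inside the ideals themselves, never invoking radicality or any Nullstellensatz-type step, so it is more elementary and self-contained; the price is heavier bookkeeping in the first containment, especially the extra relation needed for $n\geq 4$. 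Both are legitimate proofs; yours would be the natural choice if one wanted to keep the lemma independent of the radicality result used elsewhere in the paper.
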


\begin{proof}
    All claims follow from the containments $f,g \in ((x_i-x_j)(x_k-x_l))$, where
    \begin{align*}
    f &\coloneqq n(n-1)(x_1^3-x_2^3) - (n^2-3n+3)p_2(x_1-x_2) \\ &- (2n-3)p_1(x_1^2-x_2^2) + n(n-2)x_n p_1(x_1-x_2), \\
    g &\coloneqq (n-2)p_2(x_1-x_2) - n x_n p_1(x_1-x_2) + n x_n (x_1^2-x_2^2) \\ &- n(n-2)(x_1-x_2)(x_1^2-x_n^2) + (n-2)(x_1-x_2) \sum_{i=2}^n (x_1^2-x_i^2).
    \end{align*}
    The ideal $((x_i-x_j)(x_k-x_l))$ is the vanishing ideal of the set $Z \coloneqq \{S_n.(b,a,a,\ldots,a) : a,b \in \CC\}$, in particular it is radical. The radicality follows from the fact that $((x_i-x_j)(x_k-x_l))$ is the Specht ideal for the partition $(n-2,2)$, and all Specht ideals are radical by~\cite{Murai2022Specht}.\footnote{This was first proved by Haiman and Woo in an unpublished manuscript titled ``Garnir modules, Springer fibers, and Ellingsrud--Str{\o}mme cells on the Hilbert Scheme of points.''} Hence, it is enough to show that $f$ and $g$ vanish on  $Z$. Since both are divisible by $x_1-x_2$, it is enough to prove their vanishing on $(b,a,a,\ldots,a)$ and $(a,b,a,\ldots,a)$. These are simple computations.
\end{proof}

\subsection{Singular points in Table~\ref{tab:classification}}
We now indicate how to obtain the last column of Table~\ref{tab:classification}, i.e., how to decide which ideals correspond to smooth points of $\HH_\rho$. For all ideals in rows~8--13 we have checked smoothness via \texttt{Macaulay2}, implementing the algorithm of Lehn and Terpereau \cite[Section~5]{LehnTerpereau} for computing the tangent space. For smoothness one only has to check that the tangent space dimension agrees with the dimension of the smoothable component and argue that it contains the respective ideal which is the case for all ideals in Table~\ref{tab:classification}. The theoretical background of the alogrithm of Lehn and Terpereau also provides the foundation for our proofs for rows~1--7, so we recall some basic results of \emph{loc. cit.} here.

Let $I \subseteq P = \CC[x_1, \ldots, x_n]$ be a homogeneous symmetric ideal. Let $N_1 \subseteq P$ be a minimal graded $S_n$-submodule generating $I$ as a $P$-module, and let $n_1 \coloneqq \dim_\CC(N_1)$. Note that each graded piece of $N_1$ is an $S_n$-representation and that $n_1$ is usually larger than $\beta_0(I)$, the minimal number of generators of $I$.

Let $K \coloneqq \ker(P \otimes_\CC N_1 \rightarrow I)$. Observe that $K$ is an $S_n$-module via the inclusion into $P \otimes_\CC N_1$, where $S_n$ acts diagonally on $P \otimes_\CC N_1$. Let now $N_2 \subseteq P \otimes_\CC N_1$ be \emph{any} graded subvector space generating $K$ as a $P$-module, giving a surjection $P \otimes N_2 \rightarrow K$. Let $n_2 \coloneqq \dim_\CC(N_2)$. 

\begin{remark}
    Lehn and Terpereau require $N_2 \subseteq P \otimes_\CC N_1$ to be a minimal graded $S_n$-submodule generating $K$ as a $P$-module. This is indeed important for the rest of their algorithm but both minimality and the $S_n$-module structure of $N_2$ are actually not needed for the tangent space computation alone. In fact, even the minimality of $N_1$ is not strictly necessary, only the grading and the $S_n$-module structure are.
\end{remark}

From the short exact sequence $0 \rightarrow K \rightarrow P \otimes_\CC N_1 \overset{\alpha}{\rightarrow} I \rightarrow 0$ and the definition of $N_2$ we get a commutative diagram
\begin{equation*}
    \begin{tikzcd}
    0 \arrow[r]
        &\Hom_P(I,P/I) \arrow[r,"\alpha^\ast"]
            &\Hom_P(P \otimes N_1, P/I) \arrow[r] \arrow[dr,"\beta^\ast"']
                &\Hom_P(K,P/I) \arrow[d,hook] \\
        &
            &
                &\Hom_P(P \otimes N_2, P/I),
\end{tikzcd}
\end{equation*}

where the upper row is exact and $\beta: P \otimes N_2 \rightarrow P \otimes N_1$ is the canonical map. Now, the presence of the Reynolds operator $P \longrightarrow P^{S_n}, \ f \mapsto \frac{1}{n!}\sum_{\sigma \in S_n}\sigma(f)$ yields that taking $S_n$-invariants is an exact functor. Therefore, $\alpha^\ast$ induces an isomorphism 
\begin{equation*}
    \Hom^{S_n}_P(I,P/I) \cong \ker(\beta^\ast) \cap \Hom^{S_n}_P(P \otimes N_1, P/I).
\end{equation*}
If $P/I \cong \rho$ as $S_n$-modules, the left hand side is naturally isomorphic to $T_{[I]} \HH_\rho$, and the right hand side is isomorphic to
\begin{equation*}
    \ker(\beta^\ast) \cap \Hom_{S_n}(N_1,\rho),
\end{equation*}
giving an effective method to compute the dimension of $T_{[I]} \HH_\rho$, since $\beta^\ast$ can be represented by an $n_2 \times n_1$ matrix with entries in $P/I$, after choosing bases of $N_1$ and $N_2$.

\subsubsection{Row~1} $I = (x_i - x_j) + \mm^r$. Here, $N_1 = \langle x_i - x_j \rangle \oplus \langle p_1^r \rangle$ and $P/I = r S^{(n)} = \langle 1 \rangle \oplus \langle p_1 \rangle \oplus \cdots \oplus \langle p_1^{r-1} \rangle$, hence any $f \in \Hom_{S_n}(N_1,P/I)$ satisfies $f(x_i - x_j) = 0$ and $f(p_1^d) = \sum_{i=0}^{r-1} \alpha_i p_1^i$ for some $\alpha_i \in \CC$. In particular, $\dim_\CC(T_{[I]} \HH) \leq r$. On the other hand, $I$ lies in the smoothable component of $\HH$, whose dimension is $r$. The last claim follows by noticing that the vanishing ideal of $r$ distinct diagonal points $(a_1, \ldots, a_1), \ldots, (a_r, \ldots, a_r)$ contains $(x_i - x_j)$ and the product of the $r$ polynomials $p_1 - n a_i$ whose top degree part is precisely $p_1^r$. 

\subsubsection{Row~2(a)+4(a)+5} $I = (x_i - x_j)_{\geq 2} + \mm^{d}$ for $d \geq 2$. Note that for $d=2$ this is the first ideal of row~4 while for $d=3$ we get the ideal of row~5 corresponding to $[a:b] = [-1:n]$.
Next, we observe that $P/(x_i-x_j)_{\geq 2}$ has constant Hilbert function $1$ in all degrees $\geq 2$ and that a homogeneous polynomial of degree $\geq 2$ belongs to $(x_i-x_j)_{\geq 2}$ if and only if it vanishes at $(1,1,\ldots,1)$.
A minimal $S_n$-stable generating set of $I$ is then given by
\begin{equation*}
    N_1 = \langle x_i^2 - x_j^2 \rangle \oplus \langle p_1(x_i - x_j)  \rangle \oplus \langle (x_i - x_j)(x_k - x_l) \rangle \oplus \langle p_1^2 - n p_2 \rangle \oplus \langle p_1^d \rangle.
\end{equation*}
Moreover, $P/I = d S^{(n)} \oplus S^{(n-1,1)} = \langle 1 \rangle \oplus \langle p_1 \rangle \oplus \cdots \oplus \langle p_1^{d-1} \rangle \oplus \langle x_i - x_j \rangle$.
Hence, any $f \in \Hom_{S_n}(N_1,P/I)$ is of the form
\begin{align*}
    f((x_i - x_j)(x_k - x_l)) &= 0, \\
    f(x_i^2 - x_j^2) &= \alpha (x_i - x_j), \\
    f(p_1 (x_i - x_j)) &= \beta (x_i - x_j), \\
    f(p_1^2 - n p_2) &= \sum_{i=0}^{d-1} \gamma_i p_1^i, \\
    f(p_1^d) &= \sum_{i=0}^{d-1} \delta_i p_1^i.
\end{align*}

If this $f$ lies in $\Hom^{S_n}_P(I,P/I)$, then necessarily $\gamma_0 = \gamma_1 = \cdots = \gamma_{d-2} = 0$. Indeed, applying the Reynolds operator to $p_1(x_1-x_2)x_1 \in I$ gives a non-zero rational multiple of $p_1(p_1^2 - n p_2)$, hence $f$ maps the latter to both a non-zero rational multiple of $\sum_{i=0}^{d-1} \gamma_i p_1^{i+1}$ and also to a multiple of the Reynolds operator applied to $\beta (x_1 - x_2) x_1$, which in turn is a multiple of $p_1^2 - n p_2$ and hence $0$ in $P/I$.

We claim that also $\delta_0 = 0$. For this, observe that $p_1(x_1-x_2) \cdot p_1^{d-1}$ maps to both $\beta(x_1-x_2) \cdot p_1^{d-1} = 0 \pmod I$ and to $(x_1 - x_2) \sum_{i=0}^{d-1} \delta_i p_1^i = \delta_0(x_1-x_2) \pmod I$, hence $\delta_0 = 0$.

So far we have proved $\dim_\CC(\Hom^{S_n}_P(I,P/I)) \leq d+2$. We claim that equality holds. For this, observe that there is no linear dependence among the degree $2$ generators of $N_1$ and that every homogeneous multiple of $(x_i-x_j)$ and $p_1^{d-1}$ with strictly larger degree already lies in $I$. Therefore, there is no linear relation among the remaining $\alpha,\beta,\gamma_{d-1},\delta_1,\ldots,\delta_{d-1}$ which actually involves at least one of $\alpha, \beta, \gamma_{d-1}$. Therefore, the only possible linear relations are among $\delta_1, \ldots, \delta_{d-1}$. By what we have explained above, such a relation arises from a syzygy of the generators of $N_1$ such that the coefficient $g$ of $p_1^d$ is a homogeneous polynomial which is non-zero in $P/I$. If $\deg(g) \geq 2$, then after rescaling we have $g = p_1^k \pmod I$ for some $2 \leq k \leq d-1$. But such a syzygy does not exist since no power $p_1^k$ (including $k = 0$ and $k = 1$) is contained in $(x_i - x_j)_{\geq 2}$. Therefore, the only case left is $g \in \langle x_i - x_j \rangle$. But this gives precisely the relation we have already used above to show $\delta_0 = 0$. This proves $\dim_\CC(\Hom^{S_n}_P(I,P/I)) = d+2$.

Finally, in order to show that $I$ is actually a singular point of $\HH_\rho$, where $\rho = d S^{(n)} \oplus S^{(n-1,1)}$, we must show that $I$ lies in the smoothable component which is easily seen to have dimension $d+1$. For this, let $a, b, c_2, c_3, \ldots, c_d \in \CC$ such that $a \neq b$ and $c_2, c_3, \ldots, c_d$ are all distinct. Define $J_{(a,b,c_2,c_3,\ldots,c_d)}$ to be the vanishing ideal of the points given by the $S_n$-orbit of $(b,a,a,\ldots,a)$ and the $d-1$ diagonal points $(c_2,\ldots, c_2), \ldots, (c_d, \ldots, c_d)$. The product of the $d$ polynomials $p_1 - ((n-1)a + b)$ and $p_1 - n c_i$ clearly lies in $J$, and its top degree part is precisely $p_1^d$. Also, for all degree $2$ generators of $N_1$ except for $p_1^2 - np_2$ there is a polynomial in $J$ whose top degree part is precisely this generator. This is evidenced by the polynomials $(x_i - x_j)(x_i + x_j - (a+b))$, $(x_i - x_j)(p_1 - ((n-1)a + b))$ and $(x_i - x_j)(x_k - x_l)$. Fixing $c_2, \ldots, c_d$ and letting $(a,b)$ tend to $(0,0)$ linearly, i.e., along the family $(ta,tb)$, we obtain the ideal $J_{(c_2,\ldots,c_d)} = ((p_1) + \mm^2) \cap \mm_{(c_2,\ldots,c_2)} \cap \cdots \cap \mm_{(c_d,\ldots,c_d)}$. For all distinct $c_2, \ldots, c_d$, this ideal therefore lies in the smoothable component. Moreover, it still contains the above polynomials with $(a,b)$ replaced by $(0,0)$. We claim that $J_{(c_2,\ldots,c_d)}$ also contains $p_1^2 - n p_2$. But this is clear since this polynomial is obviously contained in $(p_1) + \mm^2$ and vanishes on all diagonal points. Hence, letting $(c_2, \ldots, c_d)$ tend to $(0,\ldots, 0)$ linearly, i.e., along the family $(tc_2,\ldots, tc_d)$ which lies entirely in the smoothable component, the limiting ideal contains $p_1^2 - np_2$ and all the above polynomials with all parameters replaced by $0$, which is a generating set for $I$. Therefore, this limiting ideal must be $I$ itself, and hence $I$ lies in the smoothable component.

\medskip
\emph{Row 2(b).} $I = (p_1(x_i-x_j),x_i^2-x_j^2,(x_i-x_j)(x_k-x_l)) + \mm^{d}$ with $d \geq 3$. We first note that $(x_i-x_j)_{\geq 3} \subseteq I$, hence a minimal $S_n$-stable generating set of $I$ is given by
\begin{equation*}
    N_1 = \langle x_i^2 - x_j^2 \rangle \oplus \langle p_1(x_i - x_j)  \rangle \oplus \langle (x_i - x_j)(x_k - x_l) \rangle \oplus \langle p_1^d \rangle.
\end{equation*}
Observe that this is the same as for \emph{Row 2(a)} omitting the generator $p_1^2 - n p_2$. A simpler version of the previous argument then shows that $I$ lies in the smoothable component; in fact, $I$ is the associated graded ideal of \emph{every} radical ideal in $\HH_\rho$. Moreover, the same tangent space computations as for \emph{Row 2(a)} show $\dim_\CC(\Hom^{S_n}_P(I,P/I)) \leq d+1$. Indeed, the argument for $\delta_0 = 0$ still works, and this time the $\gamma_i$ do not appear. This proves that $I$ is a smooth point of $\HH_\rho$.

\subsubsection{Row 3} $I = (p_1) + \mm^2$ is the Tanisaki ideal for the partition $\lambda = (n-1,1)$ of length $2$, so $I$ is a smooth point by Proposition~\ref{prop:m=2}.

\medskip
\subsubsection{Row 4(b)} $I = (p_1,x_i^2-x_j^2,(x_i-x_j)(x_k-x_l)) + \mm^3$. For all $n \geq 3$, $\mm^3$ is already contained in $(p_1,x_i^2-x_j^2,(x_i-x_j)(x_k-x_l))$ by the proof of the classification above. As $\rho = P/I = \langle 1 \rangle \oplus \langle p_2 \rangle \oplus \langle x_i - x_j \rangle = 2S^{(n)} \oplus S^{(n-1,1)}$, picking $N_1 = \langle p_1 \rangle \oplus \langle x_i^2 - x_j^2 \rangle \oplus \langle (x_i - x_j)(x_k - x_l) \rangle$, any $f \in \Hom^{S_n}_{P}(I,P/I)$ has the form
\begin{align*}
    f(p_1) &= \alpha \cdot 1 + \beta p_2, \\
    f(x_i^2 - x_j^2) &= \gamma (x_i - x_j), \\
    f((x_i-x_j)(x_k-x_l)) &= 0,
\end{align*}
so $\dim_\CC(\Hom^{S_n}_P(I,P/I)) \leq 3$. Since the dimension of the smoothable component of $\HH_\rho$ is $3$ as well, it suffices to show that $I$ lies in the latter. For this, pick a radical ideal as in \emph{Row~4(a)} but this time such that $(n-1)a+b = nc$. The associated graded ideal is then $I$.

\subsubsection{Row~5} $I = (ap_1^2+bp_2,p_1(x_i-x_j),x_i^2-x_j^2,(x_i-x_j)(x_k-x_l)) + \mm^3$. In case $[a:b] = [-1:n]$, $I$ is the ideal of \emph{Row~2(a)} with $d = 3$, so we know that this point is singular. We claim that $I$ is a smooth point for all different $[a:b]$. The point here is that the $2$-dimensional torus $T$ fixes this $1$-dimensional set of ideals, and there are precisely two $T$-fixed points, namely $[a:b] = [-1:n]$ and $[a:b] = [1:0]$. It is therefore enough to prove that the ideal obtained for $[a:b] = [1:0]$ corresponds to a smooth point of $\HH_\rho$.
First, we note that $(p_1^2,p_1(x_i - x_j)) = (p_1)_{\geq 2}$, hence $\mm^3$ is already contained in $I = (p_1^2,p_1(x_i-x_j),x_i^2-x_j^2,(x_i-x_j)(x_k-x_l))$, as was the case in \emph{Row~4(b)}.
Hence, we can choose the generators of $I$ as an $S_n$-stable basis for $N_1$. Moreover, $\rho = P/I = \langle 1 \rangle \oplus \langle p_1 \rangle \oplus \langle p_2 \rangle \oplus \langle x_i-x_j \rangle = 3S^{(n)} \oplus S^{(n-1,1)}$. Hence, any $f \in \Hom^{S_n}_{P}(I,P/I)$ has the form
\begin{align*}
    f(p_1^2) &= \alpha \cdot 1 + \beta p_1 + \gamma p_2, \\
    f(p_1(x_i - x_j)) &= \delta (x_i - x_j), \\
    f(x_i^2 - x_j^2) &= \epsilon (x_i - x_j), \\
    f((x_i-x_j)(x_k-x_l)) &= 0.
\end{align*}
But necessarily $\alpha = 0$ since $f$ maps $p_1^2(x_i - x_j)$ to both $\alpha (x_i - x_j)$ and $\delta p_1 (x_i - x_j) = 0 \pmod I$. Hence, $\dim_\CC(\Hom^{S_n}_P(I,P/I)) \leq 4$, and the dimension of the smoothable component of $\HH_\rho$ is $4$ as well.
So finally we prove that $I$ lies in the smoothable component. For this we can pick the vanishing ideal $J$ of two distinct diagonal points $(c,\ldots,c), (d, \ldots, d)$ and the $S_n$-orbit of $(b,a,a,\ldots,a)$ for $a \neq b$ where we assume $(n-1)a+b = nc$. Then, clearly, $(p_1 - nc)(p_1 - nd) \in J$ with top degree part $p_1^2$. Hence, the associated graded ideal of $J$ must be $I$.

\subsubsection{Row~6} $I = (p_1,p_2,(x_i-x_j)(x_k-x_l)) + \mm^3$. We assume $n \geq 4$; the case $n=3$ is similar if one replaces all occurrences of $(x_i-x_j)(x_k-x_l)$ by $(x_1-x_2)(x_1-x_3)(x_2-x_3)$. Here, $N_1 = \langle p_1, p_2, p_3, (x_i-x_j)(x_k-x_l)\rangle$ and $\rho = P/I = \langle 1 \rangle \oplus \langle x_i - x_j \rangle \oplus \langle x_i^2 - x_j^2 \rangle = S^{(n)} \oplus 2S^{(n-1,1)}$. Hence, any $f \in \Hom^{S_n}_{P}(I,P/I)$ has the form
\begin{align*}
    f(p_1) &= \alpha \cdot 1, \\
    f(p_2) &= \beta \cdot 1, \\
    f(p_3) &= \gamma \cdot 1, \\
    f((x_i-x_j)(x_k-x_l)) &= 0.
\end{align*}
We use the containment
\begin{equation*}
    p \coloneqq x_1 x_2 (x_1-x_2) p_1 - (x_1^2 - x_2^2) p_2 + (x_1 - x_2) p_3 \in ((x_i-x_j)(x_k-x_l)),
\end{equation*}
which can be proven as before using the radicality of $((x_i-x_j)(x_k-x_l))$. Hence,
\begin{equation*}
    0 = f(p) = - \beta (x_1^2 - x_2^2) + \gamma (x_1 - x_2).
\end{equation*}
This implies $\beta = \gamma = 0$, hence $\dim_\CC(\Hom^{S_n}_P(I,P/I)) \leq 1$. On the other hand, the additive group $(\CC,+)$ acts freely on $\HH_\rho$ by translation of the support along a diagonal vector in $\CC^n$, so every irreducible component of $\HH_\rho$ has dimension $\geq 1$. Therefore, $I$ must be a smooth point even though there is \emph{no} radical ideal in $\HH_\rho$ by Proposition~\ref{lemma:radicalIdeals}. In fact, since $I$ is the only homogeneous ideal in $\HH_\rho$, this proves that $\HH_\rho \cong \CC^1$ is precisely the $(\CC,+)$-orbit of $I$. So in some sense, $I$ has no meaningful embedded symmetric deformations. 

\subsubsection{Row~7(a)} $I = (p_1,(x_i-x_j)(x_k-x_l)) + \mm^3$. The tangent space dimension for $n=3$ can be computed to be $4$ in \texttt{Macaulay2} via the algorithm of Lehn--Terpereau, so we may assume $n \geq 4$. In this case a minimal $S_n$-stable set of generators is given by $N_1 = \langle p_1, p_3, p_2(x_i-x_j), (x_i-x_j)(x_k-x_l) \rangle$. Moreover, $\rho = P/I = \langle 1 \rangle \oplus \langle p_2 \rangle \oplus \langle x_i - x_j \rangle \oplus \langle x_i^2 - x_j^2 \rangle = 2S^{(n)} \oplus 2S^{(n-1,1)}$.
Hence, any $f \in \Hom^{S_n}_{P}(I,P/I)$ has the form
\begin{align*}
    f(p_1) &= \alpha \cdot 1 + \beta p_2, \\
    f(p_3) &= \gamma \cdot 1 + \delta p_2, \\
    f(p_2(x_i-x_j)) &= \epsilon (x_i-x_j), \\
    f((x_i-x_j)(x_k-x_l)) &= 0.
\end{align*}
We claim $\alpha = 0$. For this, we again look at the first polynomial in the proof of Lemma~\ref{lem:relations}:
\begin{align*}
    p &\coloneqq n(n-1)(x_1^3-x_2^3) - (n^2-3n+3)p_2(x_1-x_2) \\ &- (2n-3)p_1(x_1^2-x_2^2) + n(n-2)x_n p_1(x_1-x_2) \in ((x_i-x_j)(x_k-x_l)).
\end{align*}
Therefore, $f(p) = 0$. At the same time, $x_1^3 - x_2^3 \in I$ spans a copy of $S^{(n-1,1)}$, so it must map to $\epsilon' (x_1 - x_2)$ for some $\epsilon'$. Hence,
\begin{align*}
    0 = f(p) &= \left(n(n-1) \epsilon' - (n^2-3n+3) \epsilon \right) (x_1-x_2) \\ &+ n(n-2) \alpha x_n(x_1-x_2) - (2n-3)(x_1^2 - x_2^2) \alpha.
\end{align*}
Hence, the coefficient of $x_1-x_2$ must vanish. Moreover, if $\alpha$ was non-zero, then necessarily
\begin{equation*}
    g \coloneqq n(n-2) x_n(x_1-x_2) - (2n-3)(x_1^2 - x_2^2) \in I.
\end{equation*}
This is equivalent to the existence of a linear form $h$ with $g - p_1 h \in ((x_i-x_j)(x_k-x_l))$. Since the latter ideal is the vanishing ideal of the set $\{S_n.(b,a,a,\ldots,a) : a,b \in \CC\}$, we only need to show that there is no linear form $h$ such that $g - p_1 h$ vanishes on this entire set. The polynomial $g$ vanishes on all the points with $b$ not occurring as the first or second coordinate. A simple computation shows that then necessarily $h = \lambda (x_1-x_2)$ for some $\lambda \in \CC$. Another computation shows that there is no $\lambda$ such that $g - \lambda p_1 (x_1-x_2)$ vanishes on $(b,a,a,\ldots,a)$ for all $a,b \in \CC$, a contradiction. This shows $\alpha = 0$ and hence $\dim_\CC(\Hom^{S_n}_P(I,P/I)) \leq 4$.

Since the smoothable component of $\HH_\rho$ has dimension $4$ as well, we only need to show that $I$ is smoothable. Indeed, let $J$ be the vanishing ideal of the two $S_n$-orbits of $(b,a,a,\ldots,a)$ and $(d,c,c,\ldots,c)$ where $a \neq b$, $c \neq d$ and $p_1(b,a,a,\ldots,a) = p_1(d,c,c,\ldots,c)$. The associated graded ideal of $J$ then clearly contains $p_1$ and $(x_i - x_j)(x_k - x_l)$, hence must be $I$.

\medskip
\emph{Row~7(b).} $I = (p_1,p_2,(x_i-x_j)(x_k-x_l)) + \mm^4$ for $n \geq 4$. Here, $\mm^4$ is already contained in $(p_1,p_2,(x_i-x_j)(x_k-x_l))$, so these generators form a minimal $S_n$-stable basis of $N_1$. Moreover, $\rho = P/I = \langle 1 \rangle \oplus \langle p_3 \rangle \oplus \langle x_i - x_j \rangle \oplus \langle x_i^2 - x_j^2 \rangle = 2S^{(n)} \oplus 2 S^{(n-1,1)}$.
Hence, any $f \in \Hom^{S_n}_{P}(I,P/I)$ has the form
\begin{align*}
    f(p_1) &= \alpha \cdot 1 + \beta p_3, \\
    f(p_2) &= \gamma \cdot 1 + \delta p_3, \\
    f((x_i-x_j)(x_k-x_l)) &= 0,
\end{align*}
yielding $\dim_\CC(\Hom^{S_n}_P(I,P/I)) \leq 4$. Therefore, it suffices again to prove that $I$ is smoothable. One may pick any radical ideal $J$ as we did for \emph{Row~7(a)} but such that additionally $p_2(b,a,a,\ldots,a) = p_2(d,c,c,\ldots,c)$ holds. Then, the associated graded ideal of $J$ contains $p_1$, $p_2$ and $(x_i-x_j)(x_k-x_l)$, hence must be $I$.

\medskip
\emph{Row~7(c).} $I = (p_1^2,p_2,ap_1(x_i-x_j)+b(x_i^2-x_j^2),(x_i-x_j)(x_k-x_l)) + \mm^3$. As for \emph{Row~5} above, the $2$-dimensional torus $T$ fixes this $1$-dimensional set of ideals and there are precisely two $T$-fixed points, namely $[a:b] \in \{[1:0],[-2:n]\}$. It suffices to prove that the first choice yields a singular point while the second choice yields a smooth point. For $n=3$ this can again be checked in \texttt{Macaulay2}, so we may assume $n \geq 4$.
We first assume $[a:b] = [1:0]$. We let $N_1 = \langle p_1^2, p_2, p_3, p_1(x_i-x_j), (x_i-x_j)(x_k-x_l) \rangle$ and $\rho = P/I = \langle 1 \rangle \oplus \langle p_1 \rangle \oplus \langle x_i - x_j \rangle \oplus \langle x_i^2 - x_j^2 \rangle = 2S^{(n)} \oplus 2S^{(n-1,1)}$.
Hence, any $f \in \Hom^{S_n}_{P}(I,P/I)$ has the form
\begin{align*}
    f(p_1^2) &= \alpha_0 \cdot 1 + \alpha_1 p_1, \\
    f(p_2) &= \beta_0 \cdot 1 + \beta_1 p_1, \\
    f(p_3) &= \gamma_0 \cdot 1 + \gamma_1 p_1, \\
    f(p_1(x_i-x_j)) &= \delta_0 (x_i-x_j) + \delta_1 (x_i^2-x_j^2), \\
    f((x_i-x_j)(x_k-x_l)) &= 0.
\end{align*}
We claim $\alpha_0 = \beta_0 = \gamma_0 = 0$. This follows again from the containment
\begin{equation*}
    p_3(x_1-x_2) - p_2(x_1^2-x_2^2) + p_1 x_1 x_2 (x_1-x_2) \in ((x_i-x_j)(x_k-x_l)).
\end{equation*}
Applying $f$, we obtain
\begin{equation*}
    0 = \gamma_0 (x_1 - x_2) - \beta_0 (x_1^2 - x_2^2) + \frac{\alpha_0}{n} x_2(x_1-x_2).
\end{equation*}
Here, we have used $n p_1 x_1 = p_1^2 + \sum_{i=2}^n (x_1 - x_i) p_1$ for the last term. Note moreover that $p_1 x_1 x_2 (x_1-x_2).$ must also map to $\delta_0 (x_1-x_2)x_1x_2 + \delta_1 (x_1^2-x_2^2)x_1x_2 = 0 \pmod I$, because $\mm^3 \subseteq I$. Therefore, $\alpha_0 = 0$ since $x_2(x_1-x_2) \not\in I$ because otherwise $x_1^2 - x_2^2 = x_1(x_1-x_2) + x_2(x_1-x_2) \in I$ which is not the case. Finally, we also get $\beta_0 = \gamma_0 = 0$, as claimed. 

We claim next that no more relations among $\alpha_1,\beta_1,\gamma_1,\delta_0,\delta_1$ exist. Clearly, there is no syzygy in degree $2$. For a syzygy in degree $3$, the coefficient of $p_3$ must be zero, otherwise $p_3$ would be contained in the ideal generated by the remaining generators of $N_1$. Moreover, the only linear relation among $\alpha_1,\beta_1,\gamma_1,\delta_0,\delta_1$ which could follow from a syzygy in degree $3$ would be $\delta_0 = 0$ because $\mm^3 \subseteq I$ and $(p_1)_{\geq 2} \subseteq I$. In fact, $\delta_0 = 0$ is equivalent to the existence of a linear form $g$ such that $p_1(x_1-x_2)g \in (p_1^2, p_2, (x_i-x_j)(x_k-x_l))_3$ and $g(x_1-x_2) \not\in I$. We claim that such a $g$ does not exist.

To see this, we may assume $g \in \langle x_i - x_j \rangle$. Note that if $g$ does not involve the variables $x_1$ and $x_2$, then $(x_1-x_2)g \in ((x_i-x_j)(x_k-x_l)) \subseteq I$, so we may assume that $g$ involves \emph{only} $x_1$ and $x_2$. The containment $p_1(x_1-x_2)g \in (p_1^2, p_2, (x_i-x_j)(x_k-x_l))_3$ is equivalent to the existence of two linear forms $g',g''$ such that
\begin{equation*}
    p_1(x_1-x_2)g + p_1^2g' + p_2 g'' \in ((x_i-x_j)(x_k-x_l)),
\end{equation*}
which in turn is equivalent to the vanishing of this polynomial on the vanishing set of $((x_i-x_j)(x_k-x_l))$. Picking a point $(a,a,b,a,\ldots,a)$ where $b$ is not in position $1$ or $2$, the first summand vanishes and we get
\begin{equation*}
    ((n-1)a+b)^2g'(a,a,b,a,\ldots,a) + ((n-1)a^2+b^2)g''(a,a,b,a,\ldots,a) = 0
\end{equation*}
for all $a,b \in \CC$, and all analogous equations where $b$ is at the $i$-th position with $3 \leq i \leq n$. Hence, in all these $n-2$ equations all coefficients of $a^3,a^2b,ab^2,b^3$, which are linear forms in the coefficients of $g'$ and $g''$, must vanish. A computation shows that the only way in which this is possible is that $g' = \mu (x_1-x_2)$ and $g'' = \nu (x_1-x_2)$ for some $\mu, \nu \in \CC$. Coming back to $p_1(x_1-x_2)g + p_1^2g' + p_2 g''$, applying the transposition $(1,2)$ to this polynomial, adding the result to it and evaluating on $(b,a,a,\ldots,a)$ shows that $g = \lambda (x_1 + x_2)$ for some $\lambda \in \CC$. Hence, the polynomial
\begin{equation*}
    \lambda p_1 (x_1+x_2) + \mu p_1^2 + \nu p_2
\end{equation*}
must vanish on $(b,a,a,\ldots,a)$ for all $a,b \in \CC$. It can be checked that this is only possible if $\lambda = \mu = \nu = 0$. In particular, we obtain $g = 0$, so a $g$ with the desired properties does not exist.

Finally, any syzygy in degree $\geq 4$ does not yield any linear relation among the remaining $\alpha_1,\beta_1,\gamma_1,\delta_0,\delta_1$, again because $\mm^3 \subseteq I$ and $(p_1)_{\geq 2} \subseteq I$. Thus, we have proved $\dim_\CC(\Hom^{S_n}_P(I,P/I)) = 5$.

\smallskip
We now consider the ideal $I$ corresponding to $[-2:n]$. Note that it is enough to prove $\dim_\CC(\Hom^{S_n}_P(I,P/I)) \leq 4$ and that $I$ lies in the smoothable component in order to see that $I$ is a smooth point of $\HH_\rho$. Then necessarily \emph{all} ideals in the one-dimensional family lie in the smoothable component. Moreover, all of them must be smooth points of $\HH_\rho$ except for the other $T$-fixed point corresponding to $[1:0]$ above, which then must be singular.

First, $\mm^3$ is already contained in the ideal generated by
\begin{equation*}
    N_1 = \langle p_1^2, p_2, -2p_1(x_i - x_j) + n(x_i^2 - x_j^2), (x_i-x_j)(x_k-x_l) \rangle.
\end{equation*}
Moreover, $\rho = P/I = \langle 1 \rangle \oplus \langle p_1 \rangle \oplus \langle x_i-x_j \rangle \oplus \langle p_1(x_i-x_j) \rangle = 2S^{(n)} \oplus 2S^{(n-1,1)}$.
Hence, any $f \in \Hom^{S_n}_{P}(I,P/I)$ has the form
\begin{align*}
    f(p_1^2) &= \alpha_0 \cdot 1 + \alpha_1 p_1, \\
    f(p_2) &= \beta_0 \cdot 1 + \beta_1 p_1, \\
    f(-2p_1(x_i-x_j) + n(x_i^2 - x_j^2)) &= \gamma_0 (x_i-x_j) + \gamma_1 p_1(x_i-x_j), \\
    f((x_i-x_j)(x_k-x_l)) &= 0.
\end{align*}
We use the containment
\begin{align*}
    p &\coloneqq (n-2) p_1^2(x_1-x_2) - n(n-2)p_2(x_1-x_2) \\ &- (n-1)(-2p_1(x_1-x_2)+n(x_1^2-x_2^2))(x_1-x_2) \\
    &+ 2(n-1)(-2p_1(x_1-x_3)+n(x_1^2-x_3^2))(x_1-x_2) \in ((x_i-x_j)(x_k-x_l)).
\end{align*}
Hence,
\begin{align*}
    0 = f(p) &= ((n-2)\alpha_0- n(n-2)\beta_0)(x_1-x_2) + ((n-2)\alpha_1 -n(n-2)\beta_1) p_1(x_1-x_2) \\ &- (n-1) \gamma_0 (x_1-x_2)^2 + 2(n-1) \gamma_0 (x_1-x_2)(x_1-x_3).
\end{align*}
We obtain $\alpha_0 = n \beta_0$ and at least one more non-trivial linear relation among $\alpha_1, \beta_1, \gamma_0$, hence $\dim_\CC(\Hom^{S_n}_P(I,P/I)) \leq 4$.

Finally, we need to show that $I$ lies in the smoothable component of $\HH_\rho$. For this, consider the vanishing ideal $J$ of the two $S_n$-orbits of $(b,a,a,\ldots,a)$ and $(d,c,c,\ldots,c)$ with $a,b,c,d \in \CC$ and $a \neq b$, $c \neq d$, $b \neq d$ but such that $b-a = d-c$. It can be checked that then the polynomial
\begin{equation*}
    -2 p_1(x_1-x_2) + n(x_1^2 - x_2^2) - (n-2)(b-a)(x_1-x_2)
\end{equation*}
lies in $J$. With this it is easy to see that all generators of $I$ lie in the associated graded ideal of $J$, hence the latter must agree with $I$. \qed

\subsection{Irreducibility in Table~\ref{tab:classification}}

\begin{proposition}
    Let $\rho$ be any of the $S_n$-representations in Table~\ref{tab:classification}. Then $\HH_\rho$ is irreducible. 
\end{proposition}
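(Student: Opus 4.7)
The plan is to produce, for each $\rho$ in Table~\ref{tab:classification}, an irreducible closed $T$-stable subset $Z_\rho \subseteq \HH_\rho$ and to show $\HH_\rho = Z_\rho$ set-theoretically, which yields irreducibility. For rows where $\rho$ is a direct sum of permutation modules (all rows except Row~6), I take $Z_\rho$ to be the smoothable component; for Row~6, I take $Z_\rho$ to be the closure of the $\mathbb{G}_a$-orbit of the unique homogeneous ideal, a $1$-dimensional irreducible curve. In every case $Z_\rho$ is closed, irreducible, and $T$-stable. Every irreducible component of $\HH_\rho$ is $T$-stable (since $T$ is connected) and, by \cite[Remarks~3.13(iii)]{BrionInvariantHilb}, contains at least one $T$-fixed point, necessarily a homogeneous symmetric ideal. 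The classification of Theorem~\ref{thm:classification} lists all such ideals, and the smoothness analysis of the previous subsection places each one inside $Z_\rho$ via explicit smoothings.

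At any $T$-fixed ideal $[I]$ with $\dim T_{[I]} \HH_\rho = \dim Z_\rho$, the ideal $[I]$ is a smooth point of $\HH_\rho$ of local dimension $\dim Z_\rho$, hence lies on a unique irreducible component, which must be $Z_\rho$. This immediately handles rows 1, 3, 6, 7(a), 7(b), and 8--13, where every $T$-fixed ideal already has tangent space of the right size by the previous subsection. For the $\PP^1$-families of rows 5 and 7(c), exactly one of the two $T$-fixed endpoints satisfies the tangent-dimension equality, and every generic $[a:b]$ on the $\PP^1$ is also smooth on $\HH_\rho$ by the analysis there. Propagating via the unique component through each smooth point and using $T$-stability of components, I obtain that the unique component through a generic $[a:b]$ is $T$-stable and contains the closure of the $T$-orbit of $[a:b]$, namely the whole $\PP^1$; hence it contains the smooth endpoint and must be $Z_\rho$. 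By closedness of $Z_\rho$ the singular endpoint then lies in $Z_\rho$ as well, so the entire $\PP^1$ is contained in $Z_\rho$.

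The hardest part will be the singular $T$-fixed ideals: $(x_i-x_j)_{\geq 2} + \mm^d$ and $\mm^2$ in rows 2 and 4, together with the singular endpoints of the $\PP^1$-families of rows 5 and 7(c). At each such ideal $[I]$, $\dim T_{[I]} \HH_\rho$ strictly exceeds $\dim Z_\rho$, so the smooth-point argument leaves open the possibility of an additional irreducible component through $[I]$. The plan here is to prove the equality $T_{[I]} Z_\rho = T_{[I]} \HH_\rho$ and conclude by Nakayama that the closed immersion $Z_\rho \hookrightarrow \HH_\rho$ is a local isomorphism at $[I]$, ruling out any other component through $[I]$. I plan to compute $T_{[I]} Z_\rho$ by differentiating the explicit smoothing families of the previous subsection at the origin of their parameter space, and then to match each weight vector in the Lehn--Terpereau decomposition of $T_{[I]} \HH_\rho$ against such a tangent direction, using the $T$-weight grading to keep the accounting tractable. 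Completing this weight-by-weight identification at each singular $T$-fixed point is the main technical obstacle; once it is done, combined with the $T$-fixed-point argument above it gives $\HH_\rho = Z_\rho$ globally, and hence irreducibility of $\HH_\rho$ for every $\rho$ in the table.
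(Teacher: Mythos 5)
Your overall framing is the same as the paper's: reduce to the singular $T$-fixed ideals using connectedness, the fact that every $T$-stable irreducible component contains a $T$-fixed point, and that smooth $T$-fixed points lie on a unique component, which must be the smoothable one. Your handling of the $\PP^1$-families in Rows~5 and~7(c) via the $T$-orbit closure through a generic $[a:b]$ is also fine.

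The gap is in the treatment of the singular $T$-fixed ideals, and it is a genuine one. The claim that equality of tangent spaces $T_{[I]} Z_\rho = T_{[I]} \HH_\rho$ implies, via Nakayama, that $Z_\rho \hookrightarrow \HH_\rho$ is a local isomorphism at $[I]$ is false. Equality of tangent spaces only says that the ideal $J$ cutting out $Z_\rho$ in $\mathcal{O}_{\HH_\rho,[I]}$ is contained in $\mm^2$; Nakayama gives you nothing from this. A concrete counterexample: for $X = \Spec \CC[[x,y,z]]/(x^3 - xyz)$ and its irreducible component $Z = V(x^2 - yz)$, both have Zariski tangent space $\CC^3$ at the origin, yet $X$ has a second component $V(x)$. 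So even if you carry out the weight-by-weight matching of tangent directions and verify $T_{[I]} Z_\rho = T_{[I]} \HH_\rho$, this would only show that $Z_\rho$ absorbs all first-order deformations of $I$, which does not rule out an additional component tangent to $Z_\rho$ along the whole tangent space at $[I]$.

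The paper's route around this is genuinely different and avoids tangent spaces for the singular points entirely. For the three ideals $(x_i - x_j)_{\geq 2} + \mm^d$ it explicitly parametrizes the locus $\{J : \gr J = I\}$ by writing down normal forms for the generators of $J$, derives the algebraic relations these parameters must satisfy (so the parameter space is affine and irreducible), and then compares its dimension to the dimension of the part of $Z_\rho$ sitting over $[I]$; equality plus irreducibility forces containment. For $\mm^2$ a similar parametrization is used together with a minimal-primes computation in \texttt{Macaulay2}, and for the singular ideal in Row~7(c) the argument uses the map $\varphi \colon H \to \Hilb^{S_n \times \CC^\ast}_{\rho,h}(\CC^n)$ from Subsection~\ref{subsec:assocGraded} together with upper semicontinuity of fiber dimensions on the source. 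To complete your proof you would need to replace the Nakayama step with some version of this: a direct parametrization of the fiber of $\gr$ over $[I]$ and a dimension/irreducibility comparison, rather than a first-order tangent space identity.
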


\begin{proof}
For rows~1,~3,~6 and~8--13 this follows from smoothness and connectedness in each case.

For the remaining rows~2,~4,~5~and~7, we have seen that all homogeneous symmetric ideals lie in the respective smoothable components, proving connectedness of $\HH_\rho$ in each case. It is therefore enough to consider the homogeneous symmetric ideals $I$ which are \emph{singular} points of $\HH_\rho$ and to prove, in each case, that all ideals whose associated graded ideal equals $I$ lie in the smoothable component again.

The three singular ideals in rows~2, 4 and~5 can all be written as $I = (x_i-x_j)_{\geq 2} + \mm^d$, where $d=2$ for row~4, $d=3$ for row~5, and $d>3$ for row~2.
Let now $J \subseteq P$ be any symmetric ideal whose associated graded ideal is $I$. For $d \geq 3$, we use
\begin{equation*}
    N_1 = \langle x_i^2 - x_j^2 \rangle \oplus \langle p_1(x_i - x_j)  \rangle \oplus \langle (x_i - x_j)(x_k - x_l) \rangle \oplus \langle p_1^2 - n p_2 \rangle \oplus \langle p_1^d \rangle
\end{equation*}
as generators for $I$. We claim that $J$ has generators of the form
\begin{gather*}
    (x_i - x_j)(x_k - x_l), \\
    p_1(x_i - x_j) + a_1(x_i - x_j), \\
    x_i^2 - x_j^2 + a_2(x_i - x_j), \\
    p_1^2 - np_2 + b_1 p_1 + b_2, \\
    p_1^d + \sum_{i=0}^{d-1} c_i p_1^i,
\end{gather*}
for some $a_1,a_2,b_1,b_2,c_i \in \CC$. For instance, since $(x_i - x_j)(x_k - x_l) \in I$ and $I$ is the associated graded ideal of $J$, there is an element $(x_i - x_j)(x_k - x_l) + \ell + c \in J$ for some linear form $\ell$ and some constant $c$. Applying the transposition $(i,j)$ to this polynomial yields $-(x_i - x_j)(x_k - x_l) + (i,j)(\ell) + c \in J$, hence the sum must lie in $J$ as well, and equals $\ell + (i,j)(\ell) + 2c$. It follows that $\ell = \lambda (x_i - x_j)$ for some $\lambda \in \CC$ and that $c = 0$. For the same reason, we also get $\ell = \mu (x_k - x_l)$ for some $\mu \in \CC$, hence $\ell = 0$. The reasoning for the other generators is similar.

Note that for all $k \geq 1$, we have $p_1^k(x_1 - x_2) \equiv -a_1 p_1^{k-1}(x_1-x_2) \pmod J$ and hence $p_1^k(x_1-x_2) \equiv (-a_1)^k(x_1-x_2) \pmod J$ for all $k \geq 0$.
Let $c_d \coloneqq 1$. Then we have
\begin{align*}
    J &\ni \left( \sum_{k=0}^d c_k p_1^k \right)(x_1-x_2) - (p_1(x_1-x_2) + a_1(x_1-x_2)) \left( \sum_{k=0}^{d-1} c_{k+1}p_1^k \right) \\
    &= c_0 (x_1-x_2) - a_1 \sum_{k=0}^{d-1} c_{k+1} p_1^k (x_1-x_2) \\
    &\equiv \left( (-a_1)^d + c_{d-1}(-a_1)^{d-1} + \ldots + c_0 \right) (x_1-x_2) \pmod J .
\end{align*}
Hence, we must have $(-a_1)^d + c_{d-1}(-a_1)^{d-1} + \ldots + c_0 = 0$.
Moreover,
\begin{align*}
    J &\ni \frac{-1}{(n-2)!} \sum_{\sigma \in S_n} \sigma (x_1(p_1(x_1-x_2) + a_1(x_1-x_2))) - p_1(p_1^2 - np_2 + b_1 p_1 + b_2) \\
    &= a_1 (p_1^2 - n p_2) - b_1 p_1^2 - b_2 p_1 \\
    &\equiv -b_1 p_1^2 - (a_1 b_1 + b_2)p_1 - a_1 b_2 \pmod J .
\end{align*}
Since $d \geq 3$, we obtain $b_1 = 0$ and hence also $b_2 = 0$. Together with the previous equation we obtain a flat family over an irreducible base isomorphic to an affine space of dimension $d+1$ (since we can eliminate the three variables $b_1,b_2,c_0$). Since the subset of the smoothable component consisting of all ideals whose associated graded ideal is $I$ has dimension $d+1$ as well, the general member of this family must be a smoothable ideal, hence \emph{every} member of the family is a smoothable ideal, proving our claim.

\medskip
For $d=2$ we use instead the monomial generators of $I = \mm^2$ and obtain the following generators for $J$:
\begin{gather*}
    x_i x_j + a_1 (x_i + x_j) + a_2 p_1 + a_3, \\
    x_i^2 + b_1 x_i + b_2 p_1 + b_3.
\end{gather*}
Since $\dim_\CC(P/J) = n+1$, the vanishing set of $J$ must contain at least one point on the diagonal. Hence, using the $\mathbb{G}_a$-action, we may assume that the origin is contained in the vanishing set of $J$, i.e., $a_3 = b_3 = 0$. But which parameters $a_1,a_2,b_1,b_2$ do then actually give us an ideal $J$ of the correct colength? Clearly, the associated graded ideal of $J$ contains $\mm^2$ for every choice of parameters. But equality need not hold in general; the obstruction are linear forms contained in $J$. These linear forms, however, can only arise from syzygies of the monomial generators of $\mm^2$ in degree~$3$, i.e., from the identities $x_i^2 x_j = x_i (x_i x_j)$ and $(x_i x_j) x_k = x_i (x_j x_k)$. Using these, we obtain all possible linear forms contained in $J$. Their coefficients yield $3$ linearly independent homogeneous polynomials $F_1,F_2,F_3$ of degree $2$ in $a_1,a_2,b_1,b_2$. The coefficients of the $F_i$, in turn, are integer constants or linear polynomials in $n$. Treating $n$ as a variable, we consider the ideal $(F_1,F_2,F_3)$ in the polynomial ring $\QQ[a_1,a_2,b_1,b_2,n]$. A computation in \texttt{Macaulay2} then provides the minimal primes of this ideal (over $\QQ$). They are $\mathfrak{p}_1 \coloneqq (a_1,b_1,a_2-b_2)$ and $\mathfrak{p}_2 \coloneqq (b_2+a_1+(n-1)a_2, a_1^2 + (n-2)a_1 a_2 + a_2 b_1)$. Both are geometrically prime, i.e., prime over $\CC$, even if $n$ is regarded as an integer again. The first prime ideal $\mathfrak{p}_1$ only parametrizes ideals defining a subscheme of $\CC^n$ for which the origin is \emph{not} reduced. Precisely, apart from $\mm^2$ itself, the members of the family defined by $\mathfrak{p}_1$ have the form $((p_1) + \mm^2) \cap \mm_{(c,c,\ldots,c)}$ for some $c \in \CC \setminus \{0\}$. We have already seen that these ideals all lie in the smoothable component.
The flat family defined by $\mathfrak{p}_2$ must then contain, in particular, the set of all symmetric ideals whose associated graded ideal is $\mm^2$ and which define a subscheme of $\CC^n$ for which the origin is a reduced point. Now, the \emph{general} radical ideal in $\HH_\rho$ has associated graded ideal equal to $\mm^2$, and the subset whose vanishing set contains the origin has dimension $2$. On the other hand, $\dim(\CC[a_1,a_2,b_1,b_2]/\mathfrak{p}_2) = 2$ as well. Therefore, by irreducibility, \emph{every} member of the family defined by $\mathfrak{p}_2$ is smoothable, concluding the argument.

\medskip
For the last remaining singular ideal $I = (p_1^2,p_2,p_1(x_i-x_j),(x_i-x_j)(x_k-x_l)) + \mm^3$ of Row~7(c) the reasoning is as follows. We assume first $n \geq 4$.
Let $J \subseteq P$ be any symmetric ideal with $\gr(J) = I$. We use
\[N_1 = \langle p_1^2 \rangle \oplus \langle p_2 \rangle \oplus \langle p_1(x_i-x_j) \rangle \oplus \langle (x_i-x_j)(x_k-x_l) \rangle \oplus \langle p_3 \rangle \]
as generators for $I$.
Then, clearly, $J$ has generators of the form 
\begin{align*}
    (x_i-x_j)(x_k-x_l), \\
    p_1^2 + A p_1 + B, \\
    p_2 + C p_1 + D, \\
    p_1(x_i-x_j) + E (x_i-x_j), \\
    p_3 + F p_1 + G.
\end{align*}
First, we note that $I$ is not the associated graded ideal of \emph{any} radical ideal. The reason is that the only possible radical ideals are the vanishing ideals of two distinct orbits represented by $(b,a,a,\ldots,a)$ and $(d,c,c,\ldots,c)$ for $a \neq b$, $c \neq d$. If $p_1(b,a,a,\ldots,a) = p_1(d,c,c,\ldots,c)$, then clearly $p_1$ is contained in the associated graded ideal but not in $I$. Otherwise the associated graded ideal contains some linear combination of $p_1(x_1-x_2)$ and $x_1^2 - x_2^2$ in which the latter occurs with a non-zero coefficient while $I$ only contains the former.

Next, we claim that the vanishing set of $J$ contains a diagonal point. To see this, note that the vanishing set of a symmetric ideal $J$ whose associated graded ideal is $I$, which is not radical and whose vanishing set does not contain a diagonal point must consist of precisely one orbit generated by $(b,a,a,\ldots,a)$ for $a \neq b$. Every point must have multiplicity $2$ by symmetry and since $\dim_\CC(P/J) = 2n$. The stabilizer of the subscheme of $\Spec(P/J)$ supported at a single point, say $(b,a,a,\ldots,a)$ is precisely $S_{n-1}$, embedded into $S_n$ as the permutations fixing the first index. Since its multiplicity is $2$, the subscheme supported at $(b,a,a,\ldots,a)$ is defined by the ideal $((x_1-b)^2, x_2 - a, \ldots, x_n - a)$, so that $J$ is the intersection of all permutations of this ideal. However, this is impossible since the associated graded ideal then would not contain $p_1(x_1-x_2)$ as can be checked after reducing to $a=0$ via the additive group action.

Hence, we may assume $V(J)$ contains the origin, so that all constant terms of the above generators are zero. If now $E \neq 0$, then localizing at $p_1 + E$ shows that $(x_i - x_j)$ is contained in the localized ideal, so this ideal corresponds to Row~1, contradicting the fact that $P/J \cong \rho$. Hence, $E = 0$. If $A \neq 0$, we can localize at $p_1 + A$ and obtain the ideal generated by $(x_i-x_j)(x_k-x_l)$ and $p_1,p_2,p_3$ which is contained in the ideal of row~6; this is again impossible. Hence, $A = 0$ as well. The only parameters we are left with are $C$ and $F$. This proves that the subset of $\HH_\rho$ of ideals $J$ with $\gr(J) = I$ and whose vanishing set contains the origin is either irreducible of dimension~$2$ or has dimension $\leq 1$. Therefore, taking the additive group action back in, the subset of $\HH_\rho$ of ideals $J$ with $\gr(J) = I$ (without conditions on $V(J)$) is either irreducible of dimension~$3$ or has dimension $\leq 2$.

Let now $H$ be the locally closed subset of $\HH_\rho$ of all ideals having the same Hilbert function $h$ as $I$. We endow $H$ with the reduced scheme structure. By Subsection~\ref{subsec:assocGraded}, there is a natural map $\varphi \colon H \longrightarrow \Hilb^{S_n \times \CC^\ast}_{\rho,h}(\CC^n)$, sending an ideal $I$ to its associated graded ideal $\gr(I)$. Note that the target of $\varphi$ consists precisely of the ideals in Row~7(c), so it is irreducible of dimension~$1$. Moreover, all fibers of $\varphi$ over closed points corresponding to homogeneous ideals $I'$ have a component of dimension at least $3$ containing $I'$. Indeed, for any such ideal in Row~7(c) other than $I$ itself consider the vanishing ideal $J$ of two distinct orbits represented by $(b,a,a,\ldots,a)$ and $(d,c,c,\ldots,c)$, $a \neq b$, $c \neq d$. If $p_1(b,a,a,\ldots,a) \neq p_1(d,c,c,\ldots,c)$, then there is a unique $\alpha \in \CC$ such that $\alpha p_1(x_1 - x_2) + (x_1^2 - x_2^2) + \beta (x_1 - x_2) \in J$ for some $\beta \in \CC$. This shows that $\gr J$ belongs to Row~7(c).
Conversely, given any $\alpha \in \CC$, the existence of $\beta$ such that the above polynomial belongs to $J$ defines exactly one linear condition on $a,b,c,d$. Observe that this means that the fiber of $\varphi$ over $\gr(J)$ contains a $3$-dimensional subset of the smoothable component containing $\gr(J)$.

Let $Z$ be the smoothable component of $\HH_\rho$. Consider the composition
\begin{equation*}
    (Z \cap H)_{\mathrm{red}} \hookrightarrow H \overset{\varphi}{\longrightarrow} \Hilb^{S_n \times \CC^\ast}_{\rho,h}(\CC^n).
\end{equation*}
By the upper semicontinuity of fiber dimensions (on the source!), the intersection $\varphi^{-1}([I]) \cap Z$ has dimension at least $3$. This proves, first, that $\varphi^{-1}([I])$ is actually irreducible of dimension $3$ and, second, that this entire fiber $\varphi^{-1}([I])$ lies in the smoothable component $Z$. This in turn proves the irreducibility of $\HH_\rho$ for $n \geq 4$.

For $n = 3$, the role of $(x_i-x_j)(x_k-x_l)$ is played by $(x_1-x_2)(x_1-x_3)(x_2-x_3)$. With this replacement all arguments stay the same.
\end{proof}

\section{Conclusion}\label{section:conclusion}

\subsection{Connectedness, irreducibility, singularities}
The last two properties are in general not well-understood even for usual Hilbert schemes of points, and so the same is true in the invariant setting. For general reductive groups, invariant Hilbert schemes may be disconnected, see \cite{BrionInvariantHilb} and the references therein. On the other hand, all examples we have seen in this article are connected.

\begin{question}
    Is $\HH_\rho$ connected whenever it is non-empty, at least if $\rho$ is a direct sum of permutation modules?
\end{question}

\begin{question}
    What is a minimal example of $\rho$ such that $\HH_\rho$ is reducible?
\end{question}

As a consequence of the next proposition, reducible examples exist, for example, in the case where $\rho = \CC[S_n]^l$ is a direct sum of $l$ regular representations if $n$ and $l$ are large enough.
\begin{proposition}\label{prop:HilbChowIso}
    If $\rho = \CC[S_n]^{\oplus l}$, the Hilbert--Chow morphism
    \begin{equation*}
    \gamma \colon \HH_{\rho} \longrightarrow \Hilb_{l}(\CC^n/S_n)
\end{equation*}
is an isomorphism.
\end{proposition}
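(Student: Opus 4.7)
The plan is to construct an explicit inverse $\iota \colon \Hilb_l(\CC^n/S_n) \to \HH_\rho$ to $\gamma$, exploiting the Chevalley--Shephard--Todd theorem, which asserts that $P = \CC[x_1,\ldots,x_n]$ is free of rank $n!$ as a module over the invariant subring $P^{S_n}$. Concretely, as recalled in Section~\ref{section:preliminaries}, there is an isomorphism
\[
P \;\cong\; P^{S_n} \otimes_\CC \bigl(P/(p_1,\ldots,p_n)\bigr)
\]
of graded $P^{S_n}$-modules with compatible $S_n$-action (trivial on $P^{S_n}$), in which the coinvariant algebra $P/(p_1,\ldots,p_n)$ carries the regular representation $\CC[S_n]$.

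First I would define $\iota$ on $T$-valued points: a flat family $\mathcal{J} \subseteq \mathcal{O}_T \otimes P^{S_n}$ whose quotient is locally free of rank $l$ is sent to the extended ideal $\iota(\mathcal{J}) \coloneqq \mathcal{J} \cdot (\mathcal{O}_T \otimes P)$. Because $P$ is a free $P^{S_n}$-module, the quotient $(\mathcal{O}_T \otimes P)/\iota(\mathcal{J})$ is locally free of rank $l \cdot n!$, and the Chevalley decomposition identifies it $S_n$-equivariantly and locally in $T$ with $\mathcal{O}_T \otimes \CC[S_n]^{\oplus l}$. Hence $\iota(\mathcal{J}) \in \HH_\rho(T)$ functorially in $T$, defining the required morphism.

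Next I would verify the two composition identities. For $\gamma \circ \iota = \mathrm{id}$: since $P^{S_n} \to P$ is faithfully flat, extension followed by contraction recovers $\mathcal{J}$, i.e., $\iota(\mathcal{J}) \cap (\mathcal{O}_T \otimes P^{S_n}) = \mathcal{J}$. For $\iota \circ \gamma = \mathrm{id}$: let $\mathcal{I} \in \HH_\rho(T)$ and set $\mathcal{J} \coloneqq \gamma(\mathcal{I}) = \mathcal{I} \cap (\mathcal{O}_T \otimes P^{S_n})$. Exactness of $S_n$-invariants in characteristic zero yields $(\mathcal{O}_T \otimes P^{S_n})/\mathcal{J} \cong \bigl((\mathcal{O}_T \otimes P)/\mathcal{I}\bigr)^{S_n}$, which is locally free of rank $\dim \rho^{S_n} = l$. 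Then by flatness $(\mathcal{O}_T \otimes P)/\iota(\mathcal{J})$ is locally free of rank $l \cdot n!$, the same rank as $(\mathcal{O}_T \otimes P)/\mathcal{I}$, so the canonical surjection $(\mathcal{O}_T \otimes P)/\iota(\mathcal{J}) \twoheadrightarrow (\mathcal{O}_T \otimes P)/\mathcal{I}$ between locally free $\mathcal{O}_T$-modules of equal rank must be an isomorphism, giving $\iota(\mathcal{J}) = \mathcal{I}$.

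The principal ingredient is Chevalley--Shephard--Todd freeness, which makes tensor product with $P$ interact cleanly with $S_n$-invariants; once it is available, both composition identities reduce to rank counts. The only mildly subtle step is the faithful flatness argument for $\gamma \circ \iota = \mathrm{id}$, but it can equally well be handled by a direct projection argument after fixing a $P^{S_n}$-basis of $P$.
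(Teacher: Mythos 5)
Your proposal is correct and takes essentially the same approach as the paper: both define the inverse on $T$-points by extending the ideal along the free (hence faithfully flat) map $\mathcal{O}_T[p_1,\ldots,p_n]\to\mathcal{O}_T[x_1,\ldots,x_n]$ and then compare ranks. The only cosmetic difference is that you invoke faithful flatness to get $\gamma\circ\iota=\mathrm{id}$ directly, whereas the paper handles both composition identities uniformly by exhibiting the inclusions $\mathcal{J}\subseteq(\iota(\mathcal{J}))^{S_n}$ and $\iota(\gamma(\mathcal{I}))\subseteq\mathcal{I}$ and concluding by equality of ranks of locally free $\mathcal{O}_T$-modules.
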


\begin{proof}
    There are inverse natural transformations between the $S_n$-invariant Hilbert functor of $\CC^n$ for $\rho = \CC[S_n]^{\oplus l}$ and the usual Hilbert functor of $\CC^n/S_n$ for length~$l$ subschemes. By \cite[Section~3.4]{BrionInvariantHilb}, for a finite type $\CC$-scheme $T$, the natural transformation corresponding to the Hilbert--Chow morphism
    \begin{equation*}
        \mathcal{H}ilb^{S_n}_{\CC[S_n]^{\oplus l}}(\CC^n)(T) \rightarrow \mathcal{H}ilb_l(\CC^n/S_n)(T)
    \end{equation*}
    sends an $S_n$-stable quasi-coherent ideal sheaf $\mathcal{I} \subseteq \mathcal{O}_T[x_1,\ldots,x_n]$ to its invariant subsheaf
    $$\mathcal{I}^{S_n} \subseteq \mathcal{O}_T[p_1,\ldots,p_n].$$ (Here, the functors in the first display denote the invariant and the usual Hilbert functor, respectively.) The inverse natural transformation sends a quasi-coherent ideal sheaf $\mathcal{J} \subseteq \mathcal{O}_T[p_1,\ldots,p_n]$ to
    $$\mathcal{J} \otimes_{\mathcal{O}_T[p_1,\ldots,p_n]} \mathcal{O}_T[x_1,\ldots,x_n] \hookrightarrow \mathcal{O}_T[x_1,\ldots,x_n].$$
    This map is well-defined because $\mathcal{O}_T[x_1,\ldots,x_n]$ is a free module over $\mathcal{O}_T[p_1,\ldots,p_n]$ of rank $n!$. More precisely, the quotient
    \begin{equation*}
        \left( \mathcal{O}_T[p_1,\ldots,p_n] / \mathcal{J} \right) \otimes_{\mathcal{O}_T[p_1,\ldots,p_n]} \mathcal{O}_T[x_1,\ldots,x_n]
    \end{equation*}
    has the correct isotypic decomposition since $\mathcal{O}_T[p_1,\ldots,p_n] / \mathcal{J}$ is locally free of rank $l$.
    In order to prove that the two maps are inverse to each other, observe that
    $$\mathcal{J} \subseteq \left(\mathcal{J} \otimes_{\mathcal{O}_T[p_1,\ldots,p_n]} \mathcal{O}_T[x_1,\ldots,x_n]\right)^{S_n}$$
    and
    $$\mathcal{I}^{S_n} \otimes_{\mathcal{O}_T[p_1,\ldots,p_n]} \mathcal{O}_T[x_1,\ldots,x_n] \subseteq \mathcal{I}.$$
    The two natural surjections induced by these inclusions are surjections of locally free $\mathcal{O}_T$-modules of the same finite rank and hence isomorphisms, proving equality.
    \end{proof}

In Proposition~\ref{prop:HilbChowIso}, the target is reducible for every $n \geq 3$ if $l$ is sufficiently large by \cite{Iarrobino1972Reducibility}, hence so is the source. Another series of examples with precisely two irreducible components is given by $n \geq 4$ and $l = 8$ by~\cite{Cartwright2009HilbertSchemeOf8Points}.

\subsection{Stabilization}\label{subsec:stabilization}
Given an $S_n$-representation $\rho = \bigoplus_{i} w_i S^{\lambda^i}$, and $k \geq 0$, we let $\rho(k)$ be the $S_{n+k}$-representation obtained by replacing each $\lambda^i = (\lambda^i_1,\ldots,\lambda^i_{m_i}) \vdash n$ by $\lambda^i(k) \coloneqq (\lambda^i_1 + k, \lambda^i_2,\ldots,\lambda^i_{m_i}) \vdash n+k$.

\begin{question}\label{question:stabilization}
    Is it true that for every $S_n$-representation $\rho$ there exists $k_0$ such that for all $k \geq k_0$ we have
    \begin{equation*}
        \HH_{\rho(k)} \cong \HH_{\rho(k+1)}?
    \end{equation*}
\end{question}

A look at Table~\ref{tab:classification} seems to indicate that for all large enough $n$, intuitively, the homogeneous ideals in $\HH_\rho$ correspond bijectively to the homogeneous ideals in $\HH_{\rho(1)}$, and corresponding ideals have similar properties; for instance, the dimensions of the tangent spaces are the same. However, it does not seem to be straightforward to define natural maps $\HH_\rho \rightarrow \HH_{\rho(1)}$ or $\HH_{\rho(1)} \rightarrow \HH_\rho$ at the functorial level. So, while Question~\ref{question:stabilization} might be too much to ask for in this generality, the similarities between $\HH_\rho$ and $\HH_{\rho(1)}$ in all examples we have seen in this paper call for an explanation.

\subsection*{Acknowledgments} The second named author thanks Piotr Oszer for an inspiring discussion that eventually led to Proposition~\ref{prop:reducedPoint} and Remark~\ref{rmk:noPositiveTangents}. The largest part of this work was carried out while both authors were based at OvGU Magdeburg. There, the authors were supported by the DFG – 314838170, GRK 2297 MathCoRe. The final part was finished when the first named author was based at TU Chemnitz and the second named author was based at the MPI MiS Leipzig and HU Berlin.

\bibliographystyle{alpha}
\bibliography{library.bib}
\end{document}